\numberwithin{equation}{section}
  \newcommand\phantomsection\relax
  \newcommand{\url}[1]{#1}
  \newcommand{\href}[2]{#2}
\mathchardef\ordinarycolon\mathcode`\: \mathcode`\:=\string"8000 \begingroup
 \theoremstyle{plain}              
\newtheorem{definition}{Definition}[section]
\newtheorem{remark}{Remark}[section]
\newtheorem{theorem}{Theorem}[section]
\newtheorem{proposition}[theorem]{Proposition}
\newtheorem{lemma}[theorem]{Lemma}
\newcommand{\ind}[1]{\mathds{1}_{\{#1\}}}   
\newcommand{\E}[1]{\mathbb{E}\left[#1\right]}  
\newcommand{\Prob}[1]{\mathbb{P}\left(#1\right)} 
\newcommand{\blt}{\boldsymbol}
\newcommand{\R}{\mathbb{R}}
\newcommand{\pe}{p_e}
\newcommand{\pz}{p_Z}
\newcommand{\Zf}{Z_{f}}
\newcommand{\Zfn}{Z_{f}^n}
\newcommand{\pf}{\pi_{f}}
\newcommand{\zf}{z_{f}^*}
\DeclareMathOperator{\Min}{\wedge}
\DeclareMathOperator{\Max}{\vee}
\title{Bounds and Limit Theorems for a Layered Queueing Model in~Electric~Vehicle~Charging}
\author[1]{Angelos\ Aveklouris \thanks{a.aveklouris@tue.nl} }
\author[1]{Maria\ Vlasiou \thanks{m.vlasiou@tue.nl}}
\author[1,2]{Bert\ Zwart \thanks{Bert.Zwart@cwi.nl}}
\affil[1]{Eindhoven University of Technology}
\affil[2]{Centrum Wiskunde en Informatica}
\begin{document}

\maketitle
\begin{abstract}
The rise of electric vehicles (EVs) is unstoppable due to factors such as the decreasing cost of batteries and various policy decisions. These vehicles need to be charged and will therefore cause congestion in local distribution grids in the future. Motivated by this, we consider a charging station with finitely many parking spaces, in which electric vehicles arrive in order to get charged. An EV has a random parking time and a random charging time. Both the charging rate per vehicle and the charging rate possible for the station  are assumed to be limited. Thus, the charging rate of uncharged EVs depends on the number of cars charging simultaneously. This model leads to a layered queueing network in which parking spaces with EV chargers have a dual role, of a server (to cars) and customers (to the grid). We are interested in the performance of the aforementioned model, focusing on the fraction of vehicles that get fully charged. To do so, we develop several bounds and asymptotic  (fluid and diffusion) approximations for the vector process, which describes the total number of EVs and the number of not fully charged EVs in the charging station, and we compare these bounds and approximations with numerical outcomes.
\end{abstract}
\textbf{Keywords: Electric Vehicle charging; layered queueing networks; fluid approximation; diffusion approximation; }\\
\textbf{2010 AMS Mathematics Subject Classification: 60K25, 90B15, 68M20}

\section{Introduction}\label{sec:Introduction}

The rise of electric vehicles (EVs) is unstoppable due to factors such as the decreasing cost of batteries and various policy decisions \cite{Energy17}. Currently, the bottlenecks are the ability to charge a battery at a fast rate and the number of charging stations, but this bottleneck is expected to move towards the current grid infrastructure.
This is illustrated in \cite{2025scenario}, the authors evaluate the impact of the energy transition on a real distribution grid in a field study, based on a scenario for the year 2025. The authors confront a local low-voltage grid with electrical vehicles and ovens and show that charging a small number of EVs is enough to burn a fuse.
Additional evidence of congestion is reported in \cite{clement2010impact}.
This paper proposes  to model and analyze such congestion by the use of the so-called \emph{layered queueing networks}.
Layered  networks are specific queueing networks where some entities in the system have a dual role; e.g., servers (in our context: parking spaces with EV chargers) become customers to a higher-layer (here: the power grid). The use of layered queueing networks allows us to analyze the interaction of two sources of congestion; first, the number of available spaces with charging stations (as not all cars find a space) and second,  the amount of available power that the power grid is able to feed to the charging station. \cite{2025scenario}

We consider a charging station (or parking lot) with
finitely many parking spaces. Each space has an EV charger connecting with the power grid.
EVs arrive at the charging station randomly in order to get charged. If an EV finds an available space, it enters the parking lot and charging starts immediately.
An EV has a random parking time and a random charging time. It leaves the parking lot only when its parking time expires; i.e., it remains at its space without consuming power until its parking time expires  if finishing its charge withing the parking time.
Both the charging rate per vehicle and the charging rate possible for the complete charging station  are assumed to be limited. Thus, the charging rate of uncharged EVs depends on the number of cars charging simultaneously. Last, we assume that all available power is shared at the same rate
to all cars that need charging. The available power that can delivered by the grid is assumed to be constant.

Using queueing terminology, our model can be described as a two-layered queueing network. An EV enters  the charging station and connects its battery to an EV charger. In our context, EVs play the role of customers, while EV chargers are the servers. Thus, the system of EVs and EV chargers can be viewed as the first layer.
Moreover, EV chargers are connected to the power grid.
Thus, at the second layer, active EV chargers
act as jobs that are served simultaneously by the power grid, which plays the role of a single server.

This paper focuses on the performance analysis of this system under Markovian assumptions. Specifically, we are interested in finding the fraction of fully charged EVs in the charging station, which is equivalent to
the probability that an EV leaves the charging station with a
fully charged battery. A mostly heuristic description of some partial
results in this paper has appeared in \cite{aveklouriselectric}.
We first start with the steady-state analysis of the original system, for which we can find explicit bounds for the fraction of fully charged EVs. To do so, we study three special cases of the original system: i) there is enough power for all EVs, ii) there are enough parking spaces for all EVs, and iii) the parking lot is full. In these cases, we are able to find the explicit joint distribution in steady-state of the total number of EVs and the number of not fully charged EVs in the charging station, which we call \textit{the vector process}.

In order to improve the bounds for the fraction of fully charged EVs, we next develop a fluid approximation for the number of uncharged EVs in the parking lot.
The mathematical results here are closely
related to results on processor-sharing queues with impatience \cite{gromoll2006impact}. However, the model here is more complicated as there is a limited number of spaces in the system and fully charged cars may not leave immediately as they are still parked.

We then move to diffusion approximations, working in three asymptotic regimes. First, we consider the
Halfin-Whitt regime, in which we prove a limit theorem for the vector process, showing that it converges to a two-dimensional reflected Ornstein--Uhlenbeck (OU) process with piecewise linear drift. Then, we consider an overloaded regime for the process describing the number of total EVs in the system. In this case, the limit reduces to a one-dimensional OU process  with piecewise linear drift. Last, we  approximate  the vector process
by a two-dimensional OU process when the parking times are sufficiently large. The mathematical results here are based on martingales arguments \cite{pang2007martingale}.

EVs can be charged in several ways. Our setup can be seen as an example of slow charging, in
which drivers typically park their EV and are not physically present during charging (but are
busy shopping, working, sleeping, etc). For queueing models focusing on fast charging, we refer
to \cite{bayram2013electric, yudovina2015socially}. Both papers consider
a gradient scheduler to control delays. Next,
\cite{Swapping17} presents a queueing model for battery swapping while \cite{Turitsyn2010} is an early
paper on a queueing analysis of EV charging, focusing on designing safe control rules (in term of voltage
drops) with minimal communication overhead.

Despite being a relatively new topic, the engineering literature on EV charging is huge. We can
only provide a small sample of the already vast but still
emerging literature on EV charging. The focus of \cite{su2012performance} is on a specific
parking lot and presents an algorithm for optimally managing a large number of plug-in EVs.
Algorithms to minimize the impact of plug-in EV charging on the distribution grid are proposed in
\cite{sortomme2011coordinated}.
In \cite{li2012modeling}, the overall charging demand of plug-in EVs is considered. Mathematical
models where vehicles communicate beforehand with the grid to convey information about their
charging status are studied in \cite{said2013queuing}. In \cite{kempker2016optimization}, EVs are the central object and a dynamic program is formulated that prescribes how EVs should charge
their battery using price signals.

In addition, layered queueing network have been successfully applied in analyzing interactive networks in communications networks and manufacturing systems.
These are queueing networks where some entities in the system have a dual role. In such systems,
the dynamics in layers are correlated and the service speeds vary over time. Layered queueing
networks can be characterized by separate layers (see \cite{rolia1995method} and
\cite{woodside1995stochastic}) or simultaneous layers (such as our model); \cite{aveklourisSSC}. In the first case, customers receive
service with some delay. An application where layered networks with separate layers appear is the
manufacturing systems e.g., \citep{dorsman2013marginal} and \citep{dorsman2015heavy}. On the
other hand, in layered networks with simultaneous layers, customers receive service from the
different layers simultaneously. Layered networks with simultaneous layers have applications in
communications networks, e.g.\ in web-based multi-tiered system architectures. In such
environments, different applications compete for access to shared infrastructure resources, both
at the software level
 and at the hardware level.
For background, see  \cite{van2001web}, and \cite{van2009dynamic}.

The paper is organized as follows. In Section~\ref{sec:Model}, we provide
a detailed model description -- in particular we introduce our stochastic model and we define the system dynamics. Next, in Section~\ref{sec:explisit results} we present some explicit bounds in steady-state for the fraction of fully charged EVs.
Section~\ref{sec:Asymptotic approximations} contains several asymptotic approximations. First, a fluid approximation is presented; we then derive diffusion limits and  approximations in three asymptotic regimes. Numerical validations are presented in
Section~\ref{sec:numerics}. Last, all proofs are gathered in Section~\ref{proofs}.

\section{Model}\label{sec:Model}
In this section, we provide a detailed formulation of our model and explain various notational
conventions that are used in the remainder of this work.

\subsection{Preliminaries} \label{sec:Preliminaries and model description}
We use the following notational conventions. All
vectors and matrices are denoted by bold letters. Further,  $\R$ is the set of real numbers,
$\R_+$ is the set of nonnegative real numbers, and $\mathbb{N}$ is the set of strictly positive
integers.
For real numbers $x$ and $y$, we define $x\Max y:=\max\{x,y\}$ and
$x\Min y:=\min\{x,y\}$.
Furthermore, $\blt I$ represents the identity matrix and $\blt e$ and $\blt e_0$ are vectors
consisting
of 1's and 0's, respectively, the dimensions of which are clear from the context.
Also, $\blt {e}_i$ is the vector whose $i^\text{th}$ element is 1 and the rest are all 0.

Let $(\Omega,\mathcal{F},\mathbb{P})$
be a probability space. For $T>0,$ let
$\mathcal{D}[0,T]^2:=\mathcal{D}[0,T]\times\mathcal{D}[0,T]$ be the two-dimensional Skorokhod
space; i.e.,
the space of two-dimensional real-valued functions on $[0,T]$ that are right continuous with
left limits endowed with the $J_1$ topology; cf.\ \cite{chen2001fundamentals}. Observe that as all candidate limit objects we consider are continuous, we only need to work with the uniform topology.
It is well-known that the space
$(\mathcal{D}[0,T]^2,J_1)$ is a complete and
separate metric space (i.e., a Polish metric space); \cite{billingsley1999convergence}.
We denote by $\mathcal{B}(\mathcal{D}[0,T]^2)$
the Borel $\sigma-$algebra of $\mathcal{D}[0,T]^2$. We assume that all the processes are defined
from $(\Omega,\mathcal{F},\mathbb{P})$ to $(\mathcal{B}(\mathcal{D}[0,T]^2),\mathcal{D}[0,T]^2).$
Further, we write $X(\cdot):=\{X(t), t\geq 0\}$ to represent a stochastic process and
$X(\infty)$ to represent a stochastic process in steady-state. Moreover, $\overset{d}=$ and
$\overset{d}\rightarrow$ denote  equality and convergence in distribution (weak convergence).
For two random variables $X, Y$, we write $X\leq_{st} Y$ (stochastic ordering) if $\Prob{X>a}\leq \Prob{Y>a}$ for any $a\in \R$.
Further, $\Phi(\cdot)$ and $\phi(\cdot)$ represent the cumulative probability function and the  probability density function (pdf) of the standard Normal distribution, respectively. Last,
let $C^2_b(G)$ denote the space of twice continuously differentiable functions on $G$ such
that their first- and second-order derivatives are bounded.

\subsection{Model description}

We consider a charging station with $K>0$ parking spaces. Each space has an EV charger which is connected to the power grid. EVs arrive
independently at the charging station according to a Poisson process with rate $\lambda$. They
have a random charging requirement  and a random parking time denoted by $B$ and $D$, respectively. The random variables $B$
and $D$ are assumed to be mutually independent and exponentially distributed with rates $\mu$ and
$\nu$, respectively.
If an EV finishes its charge, it remains at its space without consuming power until its parking time expires. We call these EVs \emph{fully charged EVs}. Thus, EVs leave the system only after their parking time expires, which implies that an EV may leave the system without its
battery being fully charged.
Furthermore, if all spaces are occupied, a newly arriving EV does
not enter the system but leaves immediately. As such, the total number of vehicles in the system
can be modeled by an Erlang loss system, though we need a more detailed description of the state space.

We denote by $Q(t)\in \{0,1,\ldots, K\}$ the total number of EVs (charged and uncharged) in the system at time $t\geq 0$,
where $Q(0)$ is the initial number of EVs. Further, we denote by $Z(t)\in\{0,1,\ldots, Q(t)\}$
the number of EVs without a fully charged battery at time $t$ and by $Z(0)$ the
number of such vehicles initially in the system. Thus, $C(t)=Q(t)-Z(t)$ represents the number of EVs
with a fully charged battery at time $t$.

The power consumed by the parking lot is limited and depends on the number of uncharged EVs at
time $t$. We let it be given by the \emph{power allocation function} $L:\R_{+} \rightarrow \R_{+} $,
\begin{equation*}\label{eq: allocation function}
L(Z(t)):= Z(t)\Min M.
\end{equation*}
We assume that the parameter $M$ is given and that $0<M\leq  K$. For example, the parameter $M$ can
depend on the contract between the power grid and the charging station.
Alternatively, $M$ can be thought as
the maximum number of EVs the charging station can charge at a maximum rate, where without loss of generality we can assume that the maximum rate is one. The model is illustrated in Figure~\ref{fig:generalmodel}.
\begin{figure}[!h]
\centering
\includegraphics[scale=0.5]{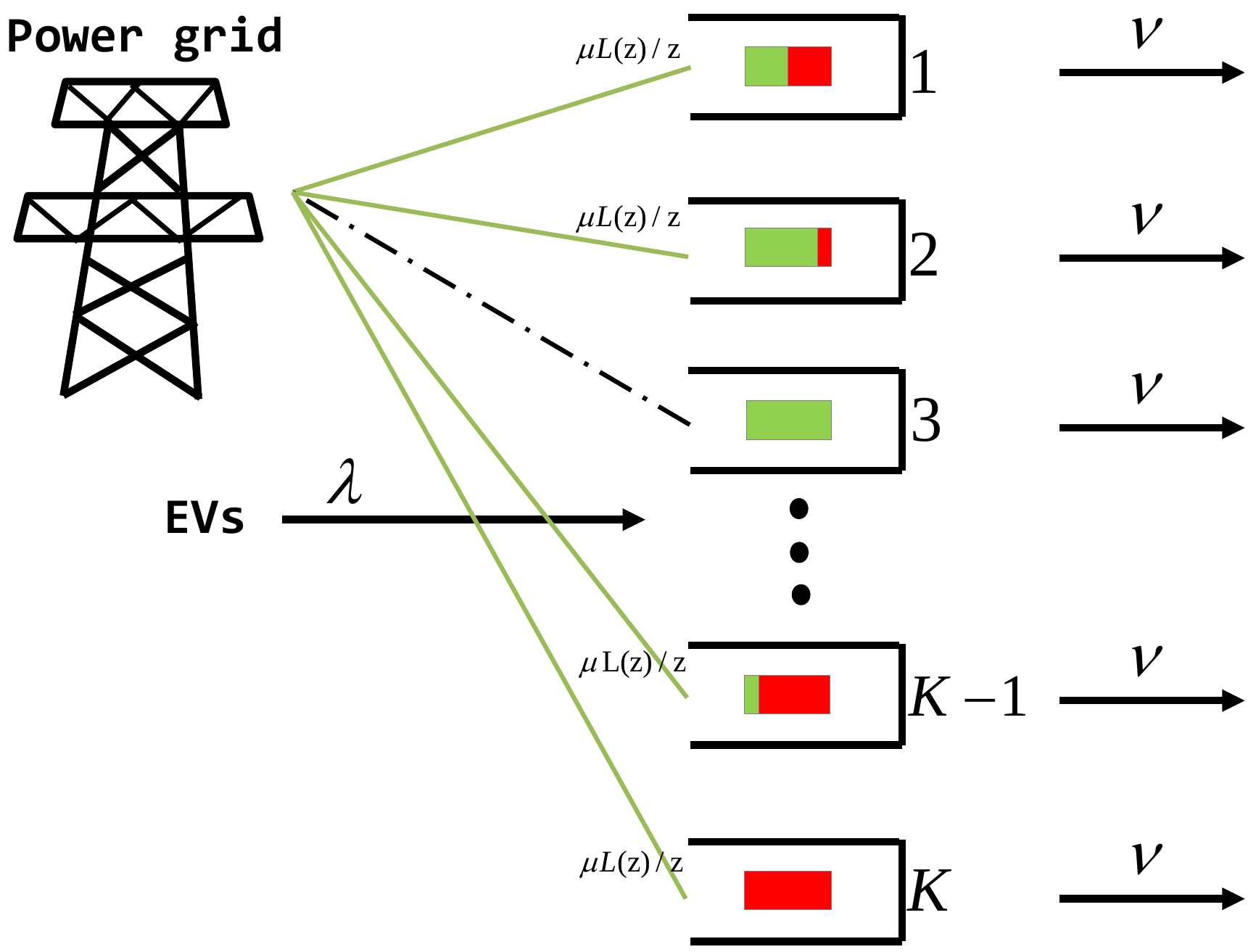}
\caption{A charging station with $K$ EV chargers.}
\label{fig:generalmodel}
\end{figure}

Last, note that the processes $Q(\cdot)$, $Z(\cdot)$, and $C(\cdot)$ depend on $K$ and $M$. We write
$Q^K_M(\cdot)$, $Z^K_M(\cdot)$, and $C^K_M(\cdot)$, when we wish to emphasize this.
It is clear from our context that the two-dimensional process
$\left\{(Q(t),Z(t)): t\geq 0\right\}$, is Markov. The transitions rates in the interior and on the boundary are shown in
Figure~\ref{fig:Transition}.

\begin{figure}[!h]
\centering
\includegraphics[scale=0.5]{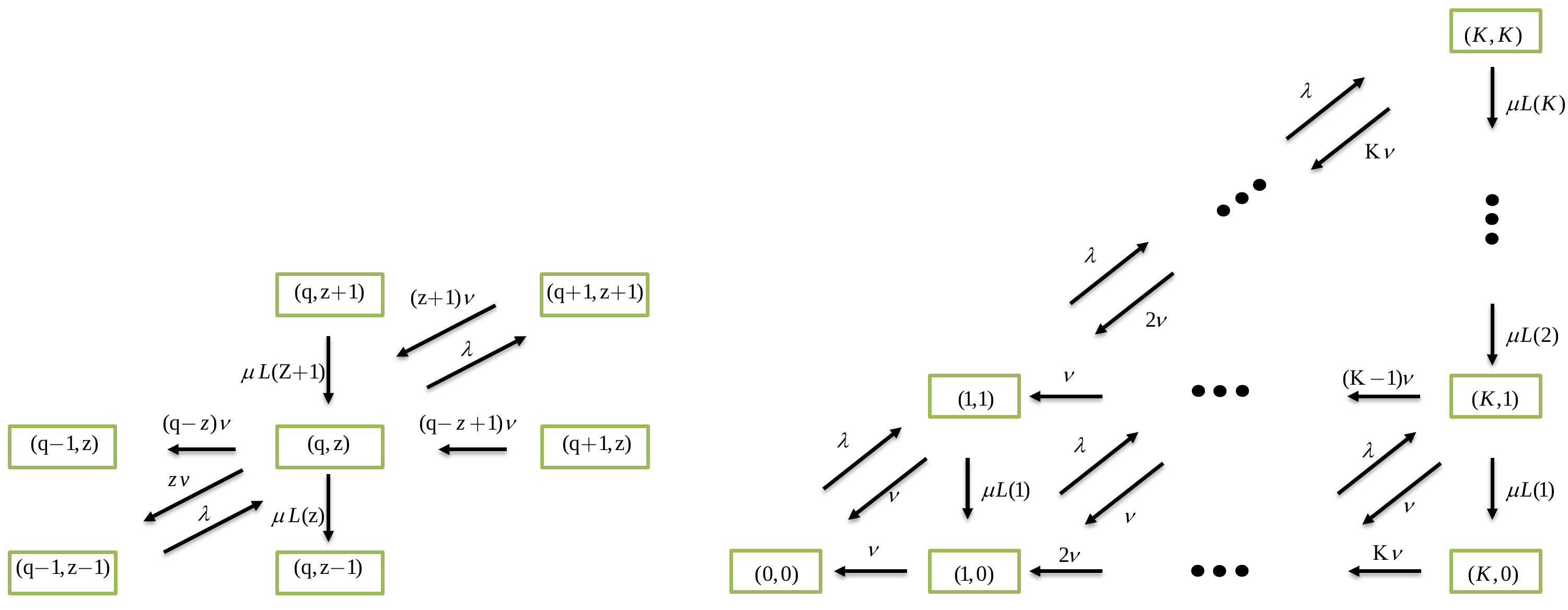}
\caption{Transition rates in the interior (left) and on the boundary (right)
of the process $\left\{(Q(t),Z(t)): t\geq 0\right\}$.}
\label{fig:Transition}
\end{figure}

\subsubsection{Alternative model description in case of infinitely many parking spaces}\label{sec:alternative}
Here, we give an alternative description of our model in case there are infinitely many parking
spaces; i.e., $K=\infty$. In this case, the model can be described as a tandem queue with impatient customers; see Figure~\ref{fig:ErlangAmodel}. EVs arrive at the charging station, which has $M$ servers, and charging starts immediately. There are two possible scenarios.
First, an EV gets fully charged during $D$ and  moves to the second queue, which has and infinite number of
servers. This happens with rate $\mu (Z(t)\Min M)$. In the second queue, EVs get served with rate $\nu C(t)$. In the second scenario, an EV abandons its charging because its parking time expired (and thus leaves the first queue impatiently); this happens with rate $\nu Z(t)$.
Note, that the total ``rate in'' in the system is $\lambda$ and the total ``rate out'' is $\nu(Z(t)+C(t))=\nu Q(t)$. In other words, $Q(t)$ describes the number of customers is an $M/M/ \infty$ queue, i.e., its steady-state distribution is a Poisson distribution with rate
$\lambda/\nu$.
\begin{figure}[!h]
\centering
\includegraphics[scale=0.5]{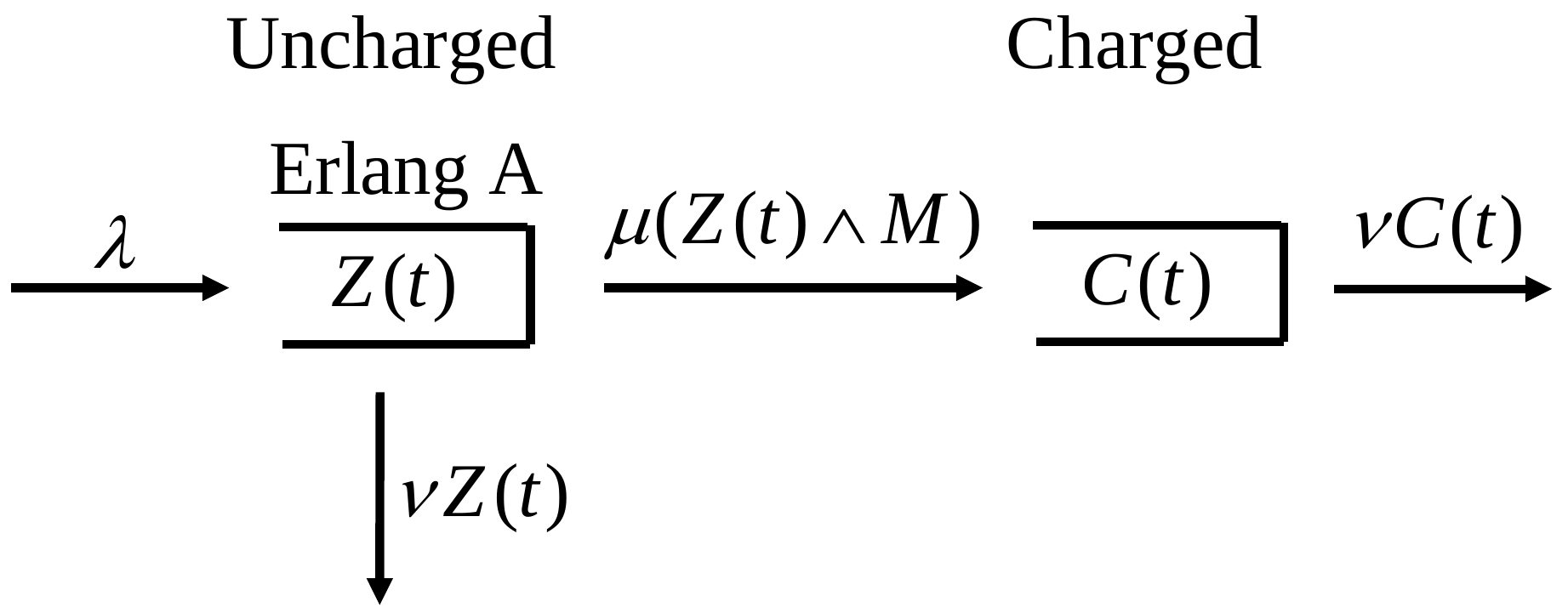}
\caption{Model description in case of infinitely many parking spaces.}
\label{fig:ErlangAmodel}
\end{figure}
As we will see in Proposition~\ref{Prop:erlang A}, the process describing the number of uncharged EVs in the system (i.e., $Z(\cdot)$)
behaves as a modified Erlang-A queue. The transition rates are shown in Figure~\ref{fig:ErlangA}.
\begin{figure}[!h]\label{fig:ErlangA}
\centering
\includegraphics[scale=0.5]{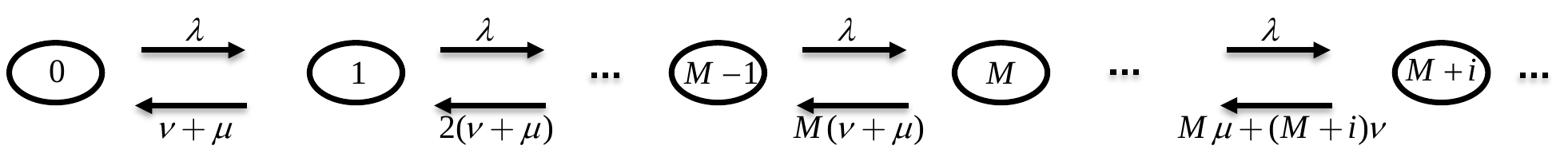}
\caption{Transition rates of the process $Z(\cdot)$ (Erlang-A).}
\label{fig:ErlangA}
\end{figure}

\subsection{System Dynamics}\label{sec:System dynamics}
In this section, we introduce the dynamics that describe the evolution of the system.
We avoid a rigorous sample-path construction of the stochastic processes and we refer to
\cite{chen2001fundamentals, pang2007martingale} for background.

For a constant $r$, let $N_{r}(\cdot)$ be a Poisson process with rate $r$. The total number of EVs in the system at
time $t\geq 0$, $Q(t)$, is given by
\begin{equation}\label{eq:numberQ}
\begin{split}
Q(t)=Q(0)+N_{\lambda} \left( \int_{0}^{t} \ind{Q(s)<K} ds\right)
-N_{\nu,1}\left(\int_{0}^{t}Z(s) ds\right)-N_{\nu,2}\left(\int_{0}^{t}C(s) ds\right),
\end{split}
\end{equation}
where $N_{\lambda}(\cdot)$, $N_{\nu,1}(\cdot)$, and $N_{\nu,2}(\cdot)$ are independent Poisson
processes. Here, the number of EVs that arrive at the charging station during the time
interval $[0,t]$, is given by the process $N_{\lambda}\left( \int_{0}^{t} \ind{Q(s)<K} ds\right)$,
$N_{\nu,1}\left(\int_{0}^{t}Z(s) ds\right)$ is the number of uncharged EVs that depart up to time $t$ and $N_{\nu,2}\left(\int_{0}^{t}C(s)
ds\right)$ counts the departures of fully charged EVs up to time $t$. Hence,
$N_{\nu,1}\left(\int_{0}^{t}Z(s) ds\right)+N_{\nu,2}\left(\int_{0}^{t}C(s) ds\right)$ is the
total number of departures until time $t$ (irrespective of whether the EVs are fully charged or not). In other words,
and by the properties of the Poisson process, we have
\begin{equation}\label{eq:mergeP}
N_{\nu,1}\left(\int_{0}^{t}Z(s) ds\right)+N_{\nu,2}\left(\int_{0}^{t}C(s) ds\right)
\overset{d} = N_{\nu}\left(\int_{0}^{t}Q(s) ds\right),
\end{equation}
and hence \eqref{eq:numberQ} describes the population in a well-known Erlang loss queue
\cite{kelly2014stochastic}.

Another important process is the number of uncharged EVs in
the system, $Z(t)$, which can be written in the following form:
\begin{align}\label{eq:numberU}
Z(t)=Z(0)+N_{\lambda} \left( \int_{0}^{t} \ind{Q(s)<K} ds\right)
-N_{\mu}\left(\int_{0}^{t} L(Z(s)) ds\right)
-N_{\nu,1}\left(\int_{0}^{t} Z(s) ds\right),
\end{align}
where $N_{\mu}\left(\int_{0}^{t} L(Z(s)) ds\right)$ is the number of EVs that get fully charged
during $[0,t]$ and is independent of the aforementioned Poisson processes.

Last, the process which describes the number of fully charged EVs is given by
\begin{equation*}\label{eq:numberC}
C(t)=Q(t)-Z(t)=C(0)
+N_{\mu}\left(\int_{0}^{t} L(Z(s)) ds\right)
-N_{\nu,2}\left(\int_{0}^{t} C(s) ds\right).
\end{equation*}
Observe that in case $K=\infty$, \eqref{eq:numberU} is reduced to the Erlang-A system; \cite{fleming1994heavy,zeltyn2005call}.
All the previous equations hold almost surely and are defined on the same probability space.

It is clear that the vector process
$(Q(\cdot), Z(\cdot))$ constitutes a two-dimensional Markov process. In the sequel, we are interested in finding the joint stationary distribution of
$(Q(\cdot), Z(\cdot))$ and in deriving the fraction of fully charged EVs. Although the computation of the exact joint distribution does not seem promising, we are able to obtain exact bounds for the fraction of fully charged EVs in the next section.

\section{Explicit bounds}\label{sec:explisit results}
The goal of this section is to  give explicit results on some performance measures. In an EV charging setting, one may be interested in finding the fraction of EVs that get fully charged. This is an important performance measure from the point of view of both drivers and of the manager of the charging station. Using arguments from queueing theory, it can be shown that the
fraction of EVs that get fully charged (in steady-state) equals
$P_{s}=1-\frac{\E{Z^K_M(\infty)}}{\E{Q^K_M(\infty)}}$. Note that $P_s$  gives the probability that a vehicle leaves the charging station with
fully charged battery.
Thus, it is clear that the computation of $P_s$ requires the computation of the (joint) distribution of the process $(Q(\cdot), Z(\cdot))$.

For the general model (i.e., for any $K< \infty$ and $M<\infty$) given in Section~\ref{sec:Model},
define the steady-state probabilities $p(q,z):=\lim_{t\rightarrow \infty}
\Prob{Q^K_M(t)=q,Z^K_M(t)=z}$. For simplicity, we use $p(q,z)$ instead of $p^K_M(q,z)$.
These steady-state probabilities are characterized by the following balance equations: for $(q,z)\in \{\R_{+}^2: z\leq q\}$, we have that
\begin{align}\label{eq:balanceEquations}
(q\nu \ind{q>0}+\lambda\ind{q<K}+\mu L(z) \ind{z>0})p(q,z) =
\lambda\ind{z>0} p(q-1,z-1)+(z+1)\nu \ind{q<K} p(q+1,z+1) \nonumber \\
       + \mu L(z+1) \ind{q\neq z}p(q,z+1)+(q-z+1)\nu\ind{q<K}p(q+1,z).
\end{align}
A closed form solution of the balance equations for any $K$ and any $M$ does not seem possible. However, we are able to obtain  explicit solutions in some special cases. Below we derive bounds for $P_s$ based on three different cases:
i) there is  enough power for everyone ($M=\infty$), ii)  there are enough parking spaces for everyone ($K=\infty$), iii) a full parking lot ($Q(t)\equiv K$). In the next proposition, we give upper and lower bounds for the fraction of EVs that get fully charged.

\begin{proposition}\label{prop:bounds}
Let $C^{K}_M(\infty)$ and $Q^{K}_M(\infty)$ be the number of fully charged EVs and the total
number of EVs in steady-state for any $K$ and any $M$. We have that
\begin{equation}\label{In:bounds}
\frac{\E {C^{\infty}_M(\infty)}}{\E {Q^{\infty}_M(\infty)}}
\leq
\frac{\E{ C^{K}_M(\infty)}}{\E {Q^{K}_M(\infty)}}
\leq\frac{\E {C^{K}_K(\infty)}}
{\E {Q^{K}_M(\infty)}}.
\end{equation}
Moreover, an additional lower bound is given by
\begin{equation}\label{In:lower2}
\frac{\E{ Q^{K}_M(\infty)}-\E {\Zf(\infty)}}
{\E{ Q^{K}_M(\infty)}}
\leq
\frac{\E{ C^{K}_M(\infty)}}{\E {Q^{K}_M(\infty)}},
\end{equation}
where $\Zf(\cdot)$ is defined in Section~\ref{sec:full parking lot}.
\end{proposition}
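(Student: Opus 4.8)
The plan is to establish the three inequalities in \eqref{In:bounds} and \eqref{In:lower2} via monotonicity (stochastic-ordering) arguments, comparing the general $(K,M)$-system against the three tractable special cases. Throughout, the key observation is that $P_s$ can be rewritten as a ratio of expectations, and $\E{Q^K_M(\infty)}$ is common to several of the bounds, so it suffices to control $\E{C^K_M(\infty)}$ (equivalently $\E{Z^K_M(\infty)}$) by comparison. First I would recall that, by \eqref{eq:mergeP} and the discussion in Section~\ref{sec:Model}, the total-count process $Q^K_M(\cdot)$ is an Erlang loss queue with offered load $\lambda/\nu$ and $K$ servers, whose law does not depend on $M$; this already justifies writing $\E{Q^K_M(\infty)}$ in all three denominators of \eqref{In:bounds} and pins down that quantity explicitly.

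For the \emph{lower bound} (left inequality of \eqref{In:bounds}), I would argue that increasing the available power can only increase the number of fully charged cars: set $M=\infty$. The cleanest route is a sample-path (pathwise) coupling of the two systems driven by the same Poisson processes $N_\lambda, N_\mu, N_{\nu,1}, N_{\nu,2}$ in \eqref{eq:numberQ}--\eqref{eq:numberU}, showing that $Z^\infty_M(t) \le_{st} Z^K_M(t)$ — fewer cars are ever left uncharged when the charging rate $L(z)=z\wedge M$ is replaced by the larger rate $z$ — hence $C^\infty_M(t) \ge_{st} C^K_M(t)$ after accounting for the fact that $Q$ is unaffected in distribution. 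Since in the $K=\infty$ comparison we also change $K$, I would instead compare $(K,M)$ with $(K,K)$ and separately invoke the $K=\infty$ results of Section~\ref{sec:alternative}; the point is that with $M \ge K$ the constraint $L(z)=z\wedge M$ is never binding because $z \le Q \le K \le M$, so the $(K,K)$-system and the $(K,\infty)$-system have identical dynamics, and the explicit Erlang-A-type stationary law from Proposition~\ref{Prop:erlang A} gives $\E{C^K_K(\infty)}$ in closed form. Combined with the monotonicity $C^K_M(\infty) \le_{st} C^K_K(\infty)$ (again by the coupling, since $M \le K$), this yields the \emph{upper bound} (right inequality of \eqref{In:bounds}), noting that the denominator is kept at $\E{Q^K_M(\infty)}=\E{Q^K_K(\infty)}$.

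For the additional lower bound \eqref{In:lower2}, which is equivalent to $\E{C^K_M(\infty)} \ge \E{Q^K_M(\infty)} - \E{\Zf(\infty)}$, i.e.\ $\E{Z^K_M(\infty)} \le \E{\Zf(\infty)}$, I would use the full-parking-lot system of Section~\ref{sec:full parking lot}, in which $Q(t)\equiv K$ so that arrivals into the uncharged pool occur at rate $\lambda$ whenever a space frees up. The intuition is that forcing the lot to stay full maximizes the influx of uncharged EVs, so $Z^K_M(t) \le_{st} \Zf(t)$; making this precise again calls for a pathwise coupling, comparing \eqref{eq:numberU} with the analogous equation for $\Zf(\cdot)$ and checking that the indicator $\ind{Q(s)<K}$ in the arrival term only reduces the input. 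Taking expectations in steady state and dividing by $\E{Q^K_M(\infty)}$ finishes the argument.

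The main obstacle I anticipate is the rigorous justification of the stochastic orderings: because the arrival stream into the uncharged population is modulated by $\ind{Q(s)<K}$ and, in the full-lot comparison, by the instantaneous availability of spaces, the naive synchronous coupling does not keep the two $Z$-processes ordered at every epoch without care at the boundary (e.g.\ when one system is full and the other is not, or when $z=M$ versus $z<M$). I would handle this by constructing the coupling on the level of the driving Poisson clocks and verifying the order is preserved across each transition type by a short case analysis on the boundary, or alternatively by invoking a generator/Markov-reward comparison theorem for the two-dimensional chain $(Q,Z)$ — monotonicity of the transition rates in the relevant coordinate — which sidesteps the explicit pathwise bookkeeping. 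Once the orderings $Z^\infty_M \le_{st} Z^K_M \le_{st} Z^K_K$ and $Z^K_M \le_{st} \Zf$ are in hand, passing to stationary expectations (using that all the chains are positive recurrent, being dominated by the Erlang loss queue) and rearranging into the stated ratios is routine.
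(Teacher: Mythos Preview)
Your handling of the right inequality in \eqref{In:bounds} and of \eqref{In:lower2} is in the right spirit and close to the paper (which also couples to obtain $Z^K_M(\infty)\leq_{st}\Zf(\infty)$, via monotonicity in $\lambda$ and then letting $\lambda\to\infty$; for the upper bound the paper takes an even shorter route, using $\max\{1,Z^K_M(\infty)/M\}\geq 1$ inside the representation $P_s=\Prob{D>B\max\{1,Z(\infty)/M\}}$ to get $P_s\leq\Prob{D>B}$ with no coupling at all). Note, however, that your summary chain $Z^\infty_M\leq_{st} Z^K_M\leq_{st} Z^K_K$ has \emph{both} inequalities pointing the wrong way.

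The genuine gap is the left inequality of \eqref{In:bounds}. In the paper's convention the superscript is $K$ and the subscript is $M$, so $C^\infty_M$, $Z^\infty_M$, $Q^\infty_M$ refer to the system with \emph{infinitely many parking spaces} and finite power $M$ --- not to $M=\infty$ as your proposal reads it. Two problems then arise. First, the correct ordering is $Z^\infty_M(\infty)\geq_{st} Z^K_M(\infty)$: dropping the cap on parking spaces means no EV is ever blocked, so \emph{more} uncharged cars are present (this is exactly what the paper proves by coupling). Second, and more importantly, since $K$ changes the denominator changes too: $Q^\infty_M(\infty)$ is Poisson$(\lambda/\nu)$ while $Q^K_M(\infty)$ is its Erlang-loss truncation, so ordering $C$ or $Z$ alone cannot compare the two ratios $\E{C}/\E{Q}$. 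The paper's essential device here is again the representation $P_s=\Prob{D>B\max\{1,Z(\infty)/M\}}$, which expresses the success probability purely through the law of $Z(\infty)$; combined with $Z^\infty_M(\infty)\geq_{st} Z^K_M(\infty)$ this yields the left inequality immediately. Your proposal never invokes this representation, and without it the argument for the left inequality does not close.
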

The proof of this proposition makes use of coupling arguments and stochastic ordering of random variables, and it is given in Section~\ref{sec:explisit results}.
We now briefly present the solution of the balance equations for the three special cases described above.

\subsection{Enough power for everyone}

Assume that $K$ is finite and that there is enough power for all EVs to be charged at a maximum rate, i.e., $M=K$.
In this case, the allocation function takes the form $L(Z(t))=Z(t)$, and the balance equations can
be solved explicitly and are given below.
\begin{proposition}\label{Pr: M=K}
Let $K<\infty$ and $M=K$; then the solution $\pe(\cdot,\cdot)$ to the balance equations
\eqref{eq:balanceEquations}
is given by the following (Binomial) distribution
\begin{equation}\label{eq:stationary dis}
\pe(q,z):=p_Q(q) \frac{q!}{z!(q-z)!}
\Big(\frac{\mu}{\nu+\mu}\Big)^{q-z}
\Big(\frac{\nu}{\nu+\mu}\Big)^{z},
\end{equation}
where
\begin{equation}\label{eq:Erlang loss formula}
p_Q(q):=\sum_{z=0}^{q}\pe(q,z)=\frac{1}{q!}\left(\frac{\lambda}{\nu}\right)^qp_Q(0).
\end{equation}
Moreover, the probability of an empty system is given by
\begin{equation*}\label{eq:empty system}
\pe(0,0)=p_Q(0)= \left(\sum_{i=0}^{K}\frac{1}{i!}
\left(\frac{\lambda}{\nu}\right)^i\right)^{-1}.
\end{equation*}
\end{proposition}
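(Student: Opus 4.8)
The plan is to verify directly that the proposed product-form $\pe(q,z)$ satisfies the balance equations \eqref{eq:balanceEquations} together with the normalization, and then invoke uniqueness of the stationary distribution of the irreducible finite-state Markov chain $(Q(\cdot),Z(\cdot))$. The key structural observation, already flagged in the paper, is that when $M=K$ the marginal process $Q(\cdot)$ is exactly an Erlang loss system: by \eqref{eq:mergeP} the total departure rate in state $q$ is $q\nu$, independent of $z$, and arrivals occur at rate $\lambda$ whenever $q<K$. Hence $p_Q(q)=\frac{1}{q!}(\lambda/\nu)^q p_Q(0)$ with the Erlang-B normalizing constant is immediate from the standard birth-death solution, and the only real content is the conditional law of $Z$ given $Q=q$.

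First I would guess, and then confirm, that conditionally on $Q(\infty)=q$ the number of uncharged EVs is $\mathrm{Binomial}(q,\nu/(\nu+\mu))$ — the intuitive reason being that, with $L(z)=z$, each of the $q$ cars present behaves independently: a tagged car is ``still charging'' with the probability that its exponential($\nu$) parking clock beats its exponential($\mu$) charging clock, namely $\nu/(\nu+\mu)$. To make this rigorous at the level of balance equations, I would substitute $\pe(q,z)=p_Q(q)\binom{q}{z}\alpha^{q-z}\beta^{z}$ with $\alpha=\mu/(\nu+\mu)$, $\beta=\nu/(\nu+\mu)$ into \eqref{eq:balanceEquations}. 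Using \eqref{eq:Erlang loss formula} to replace $p_Q(q\pm1)$ by $p_Q(q)$ times $\lambda/((q+1)\nu)$ or $q\nu/\lambda$ as appropriate, and using the elementary identities $\binom{q+1}{z+1}=\frac{q+1}{z+1}\binom{q}{z}$, $\binom{q+1}{z}=\frac{q+1}{q+1-z}\binom{q}{z}$, $\binom{q}{z+1}=\frac{q-z}{z+1}\binom{q}{z}$, every term acquires a common factor $p_Q(q)\binom{q}{z}\alpha^{q-z}\beta^{z}$, and the equation collapses to a scalar identity in $q,z,\lambda,\mu,\nu$ that one checks holds because $\alpha+\beta=1$ and $\mu\alpha^{-1}\beta = \nu$. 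I would also separately check the boundary cases $z=0$, $z=q$, and $q=K$, where some indicators vanish; these are where sign errors are most likely, so I would treat them explicitly rather than as special cases of the interior equation.

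Finally, for normalization: summing $\pe(q,z)$ over $z\in\{0,\dots,q\}$ gives $p_Q(q)$ by the binomial theorem, so $\sum_{q=0}^{K}\sum_{z=0}^{q}\pe(q,z)=\sum_{q=0}^K p_Q(q)=1$ forces $p_Q(0)=\big(\sum_{i=0}^K \frac{1}{i!}(\lambda/\nu)^i\big)^{-1}$, which is the stated Erlang-B formula. Since the chain is a finite irreducible CTMC, the solution of the balance equations with unit mass is unique, so $\pe$ is the stationary distribution.

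The main obstacle I anticipate is purely bookkeeping: carrying the binomial coefficient manipulations and the $p_Q(q\pm 1)/p_Q(q)$ ratios through the four-term right-hand side of \eqref{eq:balanceEquations} without a sign or index slip, particularly on the boundaries $z=0$, $z=q$, and $q=K$. There is no genuine difficulty beyond this — no fixed-point argument, no compactness, no uniqueness subtlety — so the proof is essentially a verification, and the cleanest write-up is to first establish the $Q$-marginal via the Erlang loss reduction and then reduce the full balance equation to the scalar identity $\mu\alpha = \nu\beta$ with $\alpha+\beta=1$.
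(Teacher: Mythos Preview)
Your proposal is correct and follows essentially the same approach as the paper: both verify the product-form candidate by direct substitution into the balance equations \eqref{eq:balanceEquations}, first reducing the $Q$-marginal to the Erlang loss formula and then using the ratio identities $p_Q(q\pm1)/p_Q(q)$ together with binomial-coefficient shifts to collapse each case, with the interior and the boundary cases $z=0$, $z=q$, $q=K$ checked separately. The paper's write-up records the four shift relations \eqref{eq:relation11}--\eqref{eq:relation14} explicitly and works through each boundary case in full, which is exactly the bookkeeping you anticipate; your slightly more conceptual packaging via $\alpha+\beta=1$ and $\mu\alpha^{-1}\beta=\nu$ is equivalent.
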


\subsection{Enough parking spaces for everyone}
In the second case,  we assume that there are infinitely many parking spaces; i.e., ($K=\infty$) and
$M<\infty$. In this case, all EVs can find a free position and  the process $Z(\cdot)$ can be modeled as a Markov process itself where
its transition rates are given in Figure~\ref{fig:ErlangA}. We see in the next proposition that
the process $Z(\cdot)$ behaves as a modified Erlang-A model with $M$ servers; \cite{zeltyn2005call}. The main difference here is that EVs can leave the system even if they are in service (i.e., are getting charged).
\begin{proposition}\label{Prop:erlang A}
For $z=0,1,\ldots$,  let $\pz(z):=\lim_{t \rightarrow \infty }\Prob{Z^{\infty}_M(t)=z}$ be the
stationary distribution of the Markov process $\{Z^{\infty}_M(t), t\geq 0\}$. It is given by
\begin{equation*}\label{eq:Stationary2}
\pz(z)= \begin{cases}
\frac{1}{z!}\left(\frac{\lambda}{\nu +\mu}\right)^z \pz(0), & \mbox{if }\ z\leq M, \\
\frac{1}{M!}\left(\frac{\lambda}{\nu +\mu}\right)^M\prod_{k=M+1}^{z} \frac{\lambda}{M\mu +k\nu}
\pz(0), & \mbox{if }\ z>M,
\end{cases}
\end{equation*}
where
\begin{equation*}\label{eq:empty system 2}
\pz(0)=\left(\sum_{j=0}^{M} \frac{1}{j!}\left(\frac{\lambda}{\nu +\mu}\right)^j +
\sum_{j=M+1}^{\infty} \frac{1}{M!}
\left(\frac{\lambda}{\nu +\mu}\right)^M\prod_{k=M+1}^{j}
\frac{\lambda}{M\mu +k\nu}
\right)^{-1}.
\end{equation*}
\end{proposition}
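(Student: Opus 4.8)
The plan is to recognize the process $\{Z^{\infty}_M(t):t\geq 0\}$ as a birth-death process on $\{0,1,2,\ldots\}$ and to read off its birth and death rates from the transition diagram in Figure~\ref{fig:ErlangA}, after which the stationary distribution follows from the standard product-form solution for birth-death chains. First I would justify that, when $K=\infty$, the coordinate $Z(\cdot)$ is Markov on its own: from \eqref{eq:numberU} with $\ind{Q(s)<K}\equiv 1$, the indicator on $Q$ disappears, so the jump rates of $Z(\cdot)$ depend only on $Z(\cdot)$ itself. An arrival increases $Z$ by one at rate $\lambda$; a charging completion decreases $Z$ by one at rate $\mu L(z)=\mu(z\Min M)$; and a departure of an uncharged EV decreases $Z$ by one at rate $\nu z$. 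Hence the birth rates are $\lambda_z=\lambda$ for all $z$, and the death rates are $d_z=\mu(z\Min M)+\nu z$; that is, $d_z=(\mu+\nu)z$ for $z\leq M$ and $d_z=M\mu+\nu z$ for $z>M$.

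Next I would invoke the well-known fact (e.g.\ \cite{kelly2014stochastic}) that an irreducible birth-death process with birth rates $\lambda_z$ and death rates $d_z$ is positive recurrent iff $\sum_{z\geq 1}\prod_{k=1}^{z}\lambda_{k-1}/d_k<\infty$, in which case $\pz(z)=\pz(0)\prod_{k=1}^{z}\lambda_{k-1}/d_k$ with $\pz(0)$ the normalizing constant. Substituting the rates above, for $z\leq M$ the product telescopes to $\frac{1}{z!}\bigl(\frac{\lambda}{\nu+\mu}\bigr)^{z}$, and for $z>M$ it splits as $\frac{1}{M!}\bigl(\frac{\lambda}{\nu+\mu}\bigr)^{M}\prod_{k=M+1}^{z}\frac{\lambda}{M\mu+k\nu}$, which is exactly the claimed formula; collecting the two pieces gives the stated expression for $\pz(0)^{-1}$.

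The only genuine point requiring care — the ``main obstacle,'' though a mild one — is verifying positive recurrence, i.e.\ that the normalizing sum converges so that a proper stationary distribution exists. For large $z$ the ratio $\lambda/(M\mu+z\nu)$ behaves like $\lambda/(\nu z)\to 0$, so the terms of the tail series are dominated by those of $\sum_z \frac{1}{z!}(\lambda/\nu)^z<\infty$ (more precisely, a ratio-test argument on $\prod_{k=M+1}^{z}\frac{\lambda}{M\mu+k\nu}$ shows the successive-term ratio tends to $0$); hence the sum is finite for all $\lambda,\mu,\nu>0$ and $M<\infty$, and the birth-death chain is ergodic. I would also remark, for completeness, that this confirms the identification of $Z(\cdot)$ with a modified Erlang-A queue with $M$ servers, the modification being that the departure rate includes the term $\nu z$ rather than $\nu(z-M)^{+}$, reflecting that uncharged EVs in service may still abandon when their parking time expires.
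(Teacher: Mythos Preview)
Your proposal is correct and follows essentially the same approach as the paper: both recognize $Z^{\infty}_M(\cdot)$ as a birth--death process with birth rate $\lambda$ and death rate $\mu(z\Min M)+\nu z$, derive the product-form stationary distribution from the detailed balance equations, and then check that the normalizing series converges. The only cosmetic differences are that the paper first writes the global balance equations and reduces them inductively to local balance before solving, and that it bounds the tail sum via $L(z)\mu+z\nu\geq z\min\{\nu,\mu\}$ to get the explicit majorant $\exp\{\lambda/\min\{\nu,\mu\}\}$, whereas you invoke the birth--death product formula directly and use a ratio test; neither difference is substantive.
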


\subsection{A full parking lot}\label{sec:full parking lot}
Last, we consider the case where the parking lot is always full, i.e., the total number of EVs (uncharged and charged) is equal to the number of parking spaces. Roughly speaking we assume that the arrival rate is infinite, and that we replace (immediately) each departing EV by a newly arriving
EV, which we assume to be uncharged. Hence, the total number of EVs always remains constant and it is equal to $K$. In other words, the original two-dimensional stochastic model reduces to a one-dimensional model. For this model, we  find its steady-state distribution  below. This result yields an upper bound for the number of uncharged EVs in the original system and hence a lower bound for the fraction of EVs that get fully charged.
As we shall see later, the result in this section plays a crucial role in the study of the diffusion limit in the overloaded regime. Also, in the numerics, we see that a modification of the full parking lot case gives a very good approximation for the fraction of fully charged EVs.

On these assumptions, all newly
arriving EVs are uncharged and so it turns out that the process describing the number of
uncharged EVs in the system, $\{\Zf(t), t\geq 0\}$, is a birth-death process. In particular, the
birth rate is $\nu (K-\Zf(t))$ and the death rate is equal to $\mu (\Zf(t)\Min M)$.
The state-state distribution of the aforementioned birth-death process is given in the following proposition.
\begin{proposition}\label{Prop:overloaded}
The steady-state distribution of the Markov process $\{\Zf(t), t\geq 0\}$ is given by
\begin{equation*}
\pf(z)= \begin{cases}
\left(\frac{\mu}{\nu}\right)^{M-z} \frac{\prod_{j=0}^{M-z-1} (M-j)}{\prod_{j=1}^{M-z} (K-M-j)}
 \pf(M), & \mbox{if }\ 0 \leq z< M, \\
\frac{1}{M^{z-M}}
\left(\frac{\nu}{\mu}\right)^{z-M} \prod_{j=0}^{z-M-1} (K-M-j) \pf(M), & \mbox{if }\ M\leq z \leq
K,
\end{cases}
\end{equation*}
where
\begin{equation*}
\pf(M)=\left(\sum_{l=0}^{M-1}
\left(\frac{\mu}{\nu}\right)^{M-l} \frac{\prod_{j=0}^{M-l-1} (M-j)}{\prod_{j=1}^{M-l} (K-M-j)}
+\sum_{l=M}^{K} \frac{1}{M^{l-M}}
\left(\frac{\nu}{\mu}\right)^{l-M}\ \prod_{j=0}^{l-M-1} (K-M-j)
\right)^{-1}.
\end{equation*}
\end{proposition}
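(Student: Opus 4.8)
The plan is to use the structural fact recorded just above the proposition: under the ``full parking lot'' dynamics $\{\Zf(t),t\ge 0\}$ is a birth--death process on $\{0,1,\dots,K\}$ with birth rates $\beta_z=\nu(K-z)$ (an uncharged car appears whenever one of the $K-z$ charged cars departs and is replaced) and death rates $\delta_z=\mu(z\wedge M)$ (a charging completion). Since $\beta_z>0$ for every $z<K$ and $\delta_z>0$ for every $z>0$ (here we use $M\ge 1$), this continuous-time chain is irreducible on a finite state space, hence positive recurrent with a unique stationary distribution $\pf$; being a birth--death chain it is reversible, so $\pf$ is pinned down by the detailed-balance relations $\pf(z)\,\beta_z=\pf(z+1)\,\delta_{z+1}$ for $0\le z\le K-1$ together with normalisation.

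Iterating detailed balance from a reference state $z_0$ gives $\pf(z)=\pf(z_0)\prod \beta_j/\delta_{j+1}$ over the relevant range of $j$. I would take $z_0=M$ as the reference, since $\delta_{j+1}=\mu\big((j+1)\wedge M\big)$ has a kink precisely there: $\delta_{j+1}=\mu M$ for $j\ge M$, and $\delta_{j+1}=\mu(j+1)$ for $j\le M-1$. For $M\le z\le K$ this yields $\pf(z)=\big(\tfrac{\nu}{\mu M}\big)^{z-M}\prod_{j=M}^{z-1}(K-j)\,\pf(M)$, and re-indexing $\prod_{j=M}^{z-1}(K-j)=\prod_{i=0}^{z-M-1}(K-M-i)$ gives the second branch of the claimed formula. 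For $0\le z<M$ one inverts instead: $\pf(z)=\pf(M)\prod_{j=z}^{M-1}\delta_{j+1}/\beta_j=\big(\tfrac{\mu}{\nu}\big)^{M-z}\dfrac{\prod_{j=z}^{M-1}(j+1)}{\prod_{j=z}^{M-1}(K-j)}\,\pf(M)$, and rewriting $\prod_{j=z}^{M-1}(j+1)=\prod_{i=0}^{M-z-1}(M-i)$ and re-indexing the denominator as a product of $M-z$ consecutive factors ending at $K-z$ delivers the first branch. Checking the empty-product convention at $z=0$, $z=M$ and $z=K$ shows the expression is well defined for all $0<M\le K$, all factors appearing being strictly positive since $z\le K$.

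Finally, $\pf(M)$ is determined by $\sum_{z=0}^{K}\pf(z)=1$: substituting the two branches and factoring out $\pf(M)$ gives exactly the stated reciprocal-of-a-sum expression. The whole argument is elementary; the only step requiring a little care is the bookkeeping in re-indexing the finite products across the kink at $z=M$ and applying the empty-product conventions consistently, and I do not anticipate a genuine obstacle — the proposition is the classical stationary formula for a finite birth--death chain rearranged into closed form.
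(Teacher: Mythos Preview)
Your proposal is correct and follows essentially the same approach as the paper: both identify $\{\Zf(t)\}$ as a finite birth--death chain, use the detailed-balance relations with $\pf(M)$ as the reference state, iterate separately over $z<M$ and $z\ge M$, and then fix $\pf(M)$ by normalisation. Your invocation of reversibility is a slightly quicker way to reach detailed balance than the paper's route of first writing the full balance equations and then reducing them, but the substance is identical.
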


In Section~\ref{sec:numerics}, we validate these bounds in the three regimes; moderately,
critically, and over-loaded. As we will see, the bounds are not very close in general. For this reason, we move to asymptotic approximations.

\section{Asymptotic approximations} \label{sec:Asymptotic approximations}
In this section, we present asymptotic approximations. First, we focus on the fluid approximation and then we move to three diffusion approximations. Consider a family of systems indexed by $n\in\mathbb{N},$ where $n$ tends
to infinity, with the same basic structure as that of the system described in
Section~\ref{sec:Model}. To indicate the position in the sequence of systems,
a superscript $n$ will be appended to the system parameters and processes. In the remainder of this section, we assume that $\E{Q^n(0)}$ and $\E{Z^n(0)} $ are finite. Last, the proofs of the limit theorems are based on martingale arguments and are given in Sections~\ref{sec:HWregime}--\ref{sec:largeparking}. We give a rigorous proof for Theorem~\ref{Th:diffusion} in Section~\ref{sec:HWregime} and we omit the full details for the other proofs.

\subsection{Fluid approximation}\label{sec:Fluid approximation}
Here, we study a fluid model, which is a deterministic model that can be thought of as a formal law of large numbers approximation under appropriate scaling.
We develop a fluid approximation for finite $K$, following a similar approach as in \cite{gromoll2006impact}. The main differences here are the finitely many servers in the system and that the state space consists of two regions: $\{Z(t)>M\}$ and $\{Z(t)\leq  M\}$.

To obtain a non-trivial  fluid limit, we assume that the capacity of power in the $n^{\text{th}}$ system is given by $nM$, the arrival rate  by $n\lambda$, the number of parking spaces by $nK$, and we do not scale the time. The fluid scaling of the process describing the number of uncharged EVs in the charging station is given by $\frac{Z^n(\cdot)}{n}$. This scaling gives rise to the following definition of a fluid model.
\begin{definition}[Fluid model]\label{def:fluid model}
A continuous function $z(t): \R_+ \rightarrow [0,K]$ is a fluid-model solution if it satisfies the ODE
\begin{equation}\label{eq:fluid limit}
z'(t)= \lambda \Min \nu K- \nu z(t) - \mu (z(t)\Min M),
\end{equation}
for $t\in[0,t^*)$, where $t^*=\inf\{s\geq0: z(s)=0\}$ and $z(t) \equiv 0$ for $t\geq t^*$.
\end{definition}
Note that \eqref{eq:fluid limit} can be written as $z'(t)= R(z(t))$, where $R(\cdot)=\lambda \Min \nu K- \nu \cdot - \mu (\cdot\Min M)$. Further, the operator  $R(\cdot)$ is Lipschitz-continuous in $\R_+$, which guarantees that \eqref{eq:fluid limit} has a unique solution.
In the proof of Proposition~\ref{prop:fluid approximation} below, we shall see that if the initial state of the fluid model solution $z(0)\in [0,K]$, then $z(t)\leq K$ for any $t\geq 0$. The last statement ensures that the definition of our fluid model is well-defined.

Next, we see that the fluid model solution can arise as a limit of the fluid scaled process $\frac{Z^n(\cdot)}{n}$.
The  proof of the following proposition is based on martingale arguments and is given in Section~\ref{sec:Fluid ap}.

\begin{proposition}\label{prop:fluid limit}
If $\frac{Z^n(0)}{n} \overset{d}\rightarrow z(0)$ and $\frac{Q^n(0)}{n} \overset{d}\rightarrow K$, then we have that $\frac{Z^n(\cdot)}{n} \overset{d}\rightarrow z(\cdot)$, as $n \rightarrow \infty$. Moreover, the deterministic function $z(\cdot)$ satisfies \eqref{eq:fluid limit}.
\end{proposition}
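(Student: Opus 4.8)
The plan is to prove Proposition~\ref{prop:fluid limit} by the standard martingale/fluid-limit route: write $Z^n(\cdot)/n$ as a deterministic drift term plus martingale noise, show the noise vanishes in the limit, establish $C$-tightness of the scaled sequence, and identify every subsequential limit as the unique solution of the ODE~\eqref{eq:fluid limit}. Throughout I would use that, by the argument already recorded in Section~\ref{sec:alternative} and equation~\eqref{eq:mergeP}, $Q^n(\cdot)$ is (the population process of) an $M/M/\infty$-type/Erlang loss queue with arrival rate $n\lambda$, capacity $nK$ and departure rate $\nu$; since $Q^n(0)/n \overset{d}\rightarrow K$ and the loss system with $nK$ servers is dominated by the $M/M/\infty$ queue, a separate (easier) functional law of large numbers gives $Q^n(\cdot)/n \overset{d}\rightarrow \min\{\lambda/\nu, K\}=:\bar q$ uniformly on compacts; in particular the indicator $\ind{Q^n(s)<nK}$ contributes $\lambda \Min \nu K$ to the drift of $Z^n/n$, which is exactly the constant appearing in $R(\cdot)$.

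Concretely, I would start from the representation~\eqref{eq:numberU},
\begin{equation*}
Z^n(t)=Z^n(0)+N^n_{n\lambda}\!\left(\int_0^t \ind{Q^n(s)<nK}\,ds\right)-N^n_{n\mu}\!\left(\int_0^t L^n(Z^n(s))\,ds\right)-N^n_{n\nu,1}\!\left(\int_0^t Z^n(s)\,ds\right),
\end{equation*}
with $L^n(z)=z\Min nM$, and center each Poisson process $N^n_r(\cdot)$ as $N^n_r(t)=rt+M^n_r(t)$ with $M^n_r$ a martingale. Dividing by $n$ and writing $\bar Z^n:=Z^n/n$, I get
\begin{equation*}
\bar Z^n(t)=\bar Z^n(0)+\int_0^t\Big(\lambda\ind{Q^n(s)<nK}-\mu\,(\bar Z^n(s)\Min M)-\nu \bar Z^n(s)\Big)ds + \bar\varepsilon^n(t),
\end{equation*}
where $\bar\varepsilon^n$ collects the three scaled martingale terms. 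The functional strong law of large numbers for the Poisson process (time-changed, with the time-changes bounded on $[0,T]$ because $\bar Z^n$ and $\bar Q^n$ are bounded by $K$) gives $\sup_{t\le T}|\bar\varepsilon^n(t)|\overset{d}\rightarrow 0$; equivalently one bounds the quadratic variations by $O(1/n)$ and invokes Doob's inequality. Combined with the FLLN for $\bar Q^n$ this also lets me replace $\lambda\ind{Q^n(s)<nK}$ by $\lambda\Min\nu K$ in the limit (on $\{Q^n<nK\}$ the integrand is $\lambda$, and the fraction of time $Q^n=nK$ has Lebesgue measure converging to the fraction of time $\bar q=K$ spends at $K$, which is $0$ unless $\lambda\ge\nu K$, in which case $\ind{}$ and the reflection conspire to give $\nu K$ — this is precisely why the min appears).

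For tightness I would note that $\bar Z^n$ has uniformly bounded total variation on compacts (bounded drift plus vanishing martingale part) and uniformly small jumps ($1/n$), so $\{\bar Z^n\}$ is $C$-tight by the standard criterion (e.g.\ via \cite{billingsley1999convergence} or Theorem on relative compactness in \cite{pang2007martingale}); the assumption $\bar Z^n(0)\overset{d}\rightarrow z(0)$ pins down the starting point. Any weak subsequential limit $z(\cdot)$ is then continuous and, by passing to the limit in the integral equation (using the continuous-mapping theorem, that $x\mapsto x\Min M$ is Lipschitz, and uniform integrability from the $[0,K]$ bound), satisfies $z(t)=z(0)+\int_0^t\big(\lambda\Min\nu K-\mu(z(s)\Min M)-\nu z(s)\big)ds$, i.e.\ \eqref{eq:fluid limit}; the $z\equiv 0$ absorption after $t^*$ and the bound $z(t)\le K$ follow from a direct comparison argument ($R(K)=\lambda\Min\nu K-\nu K-\mu(K\Min M)\le 0$, and $R(0)=\lambda\Min\nu K\ge 0$, so $[0,K]$ is invariant). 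Since $R(\cdot)$ is Lipschitz the ODE has a unique solution, so the whole sequence converges, completing the proof.

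I expect the main obstacle to be the careful handling of the indicator $\ind{Q^n(s)<nK}$: one must argue that the (random) Lebesgue measure of the set $\{s\le T: Q^n(s)=nK\}$ behaves in the limit exactly so as to turn the term $\lambda\ind{Q^n(s)<nK}$ into the constant $\lambda\Min\nu K$. The clean way is to treat the $Q^n$-process separately, establish its fluid limit $\bar q(t)\to\bar q$ (a reflected ODE at level $K$, or simply the $M/M/\infty$ limit capped at $K$), and then note that the arrival term in $Z^n$ equals the arrival term in $Q^n$ — which has the known limit $\lambda\Min \nu K$ by the Erlang-loss FLLN — so one never needs to control the indicator directly, only to reuse the limit of a process one already understands. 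Everything else is routine martingale bookkeeping.
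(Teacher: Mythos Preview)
Your proposal is correct and follows essentially the same martingale route as the paper: center the Poisson processes, show the scaled martingale noise vanishes, and identify the limit as the unique solution of the Lipschitz ODE. The only noteworthy difference is in how the blocking term is handled: the paper writes the arrivals as the \emph{unrestricted} Poisson process $N_{\lambda}(nt)$ minus the explicit lost-arrival counting process $\int_0^t \ind{Q^n(s)/n=K}\,dN(\lambda n s)$ and then quotes a result (from \cite{robert2013} and \cite{kang2015}) that the latter, divided by $n$, converges to $\max\{\lambda-\nu K,0\}\,t$, whereas you keep the time-changed arrival process and recover the constant $\lambda\Min\nu K$ by reusing the Erlang-loss FLLN for $Q^n$; both devices yield the same drift $\lambda\Min\nu K$. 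Your version is slightly more self-contained (you do not need the external references, and you spell out $C$-tightness and the uniqueness argument that the paper leaves implicit), while the paper's version makes the blocked-arrival term visible as a separate object, which is convenient later when the same decomposition is recycled for the diffusion-scale analysis.
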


Moreover, the next proposition states that the fluid model solution converges  to the unique invariant point as time goes to infinity.
\begin{proposition}\label{prop:fluid approximation}
Let $B$ and $D$ be exponential random variables with rates $\mu$ and $\nu$. We have that for any $z(0)\in [0,K]$, $z(t) \rightarrow z^*$ exponentially fast as $t \rightarrow \infty$. In addition, $z^*$ is given by the unique positive solution to the following fixed-point equation
\begin{equation}\label{eq:fluid proxy1}
z^* = (\lambda\Min \nu K) \E{ \min \{D , B \max\{ 1, \frac{ z^*}{M}\}\}}.
\end{equation}
\end{proposition}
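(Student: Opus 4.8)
The plan is to separate the two assertions: first, global exponential stability of the ODE \eqref{eq:fluid limit} toward a single equilibrium, and second, the identification of that equilibrium with the positive solution of \eqref{eq:fluid proxy1}. Write $\Lambda:=\lambda\Min\nu K$ and $R(z)=\Lambda-\nu z-\mu(z\Min M)$, so that \eqref{eq:fluid limit} reads $z'(t)=R(z(t))$. I would first record the structural facts about $R$: it is continuous and piecewise linear, with $R'(z)=-(\nu+\mu)$ on $(0,M)$ and $R'(z)=-\nu$ on $(M,\infty)$, hence strictly decreasing; $R(0)=\Lambda>0$; and $R(K)=\Lambda-\nu K-\mu M\le-\mu M<0$ since $M\le K$ and $\Lambda\le\nu K$, while $R(z)\to-\infty$ as $z\to\infty$. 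Consequently $R$ has a unique zero $z^*$, and from $R(0)>0>R(K)$ we get $z^*\in(0,K)$. The signs of $R$ at the endpoints also yield the well-definedness remark stated before Proposition~\ref{prop:fluid limit}: a trajectory started in $[0,K]$ cannot cross $K$ (drift negative there) and, since $R(0)>0$ forces $z^*>0$, it does not reach $0$ either, so it obeys \eqref{eq:fluid limit} on all of $\R_+$ and stays in $[0,K]$; the absorption clause of Definition~\ref{def:fluid model} is vacuous when $\lambda>0$.

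For the exponential convergence I would use the quadratic Lyapunov function $v(t):=z(t)-z^*$. Because the difference quotients of $R$ lie in $[-(\nu+\mu),-\nu]$ and $R(z^*)=0$, one has the one-sided bound $(R(z)-R(z^*))(z-z^*)\le-\nu(z-z^*)^2$ for every $z\ge0$, uniformly across the breakpoint at $M$. Hence $\frac{d}{dt}\big(\tfrac12 v(t)^2\big)=v(t)R(z(t))=v(t)\big(R(z(t))-R(z^*)\big)\le-\nu v(t)^2$, and integrating (Gronwall) gives $|z(t)-z^*|\le|z(0)-z^*|e^{-\nu t}$, which is the asserted exponentially fast convergence; in passing this also confirms that $z$ stays in a bounded interval around $z^*$, so the absorption at $0$ is never triggered.

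It remains to identify $z^*$ with the positive root of \eqref{eq:fluid proxy1}. For $a>0$, $aB$ is exponentially distributed with rate $\mu/a$ and independent of $D\sim\mathrm{Exp}(\nu)$, so $\min\{D,aB\}\sim\mathrm{Exp}(\nu+\mu/a)$ and $\E{\min\{D,aB\}}=a/(a\nu+\mu)$; taking $a=\max\{1,z/M\}$ gives $\E{\min\{D,B\max\{1,z/M\}\}}=\max\{1,z/M\}/(\nu\max\{1,z/M\}+\mu)$. On the other hand, for $z>0$ the equation $R(z)=0$ is $\Lambda=\nu z+\mu(z\Min M)$, i.e.\ $\Lambda/z=\nu+\mu\min\{1,M/z\}=\nu+\mu/\max\{1,z/M\}$; inverting, $z=\Lambda\,\max\{1,z/M\}/(\nu\max\{1,z/M\}+\mu)=\Lambda\,\E{\min\{D,B\max\{1,z/M\}\}}$. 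Thus a positive $z$ solves \eqref{eq:fluid proxy1} if and only if $R(z)=0$, and by the strict monotonicity of $R$ this holds for the single value $z=z^*>0$, giving both existence and uniqueness.

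I do not anticipate a genuine obstacle: the proof is a deterministic ODE-stability computation together with an elementary calculation with exponentials. The only points that need a little care are making the Lyapunov estimate uniform across the kink $z=M$ (handled by the one-sided Lipschitz bound rather than by treating $\{z\le M\}$ and $\{z>M\}$ as separate cases), and observing that the absorption-at-zero convention in Definition~\ref{def:fluid model} is irrelevant here because $\lambda>0$ makes $z^*$ strictly positive and keeps the trajectory bounded away from $0$ in the long run.
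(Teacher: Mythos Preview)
Your proof is correct and takes a genuinely different route from the paper. The paper solves the ODE \eqref{eq:fluid limit} explicitly in each linear region, obtaining closed-form expressions
\[
z(t)=z_i+\bigl(z(0)-z_i\bigr)e^{-c_it},\qquad (z_1,c_1)=\Bigl(\tfrac{\Lambda}{\nu+\mu},\nu+\mu\Bigr),\quad (z_2,c_2)=\Bigl(\tfrac{\Lambda-\mu M}{\nu},\nu\Bigr),
\]
and then argues by a case analysis on the relative position of $z_1$ versus $M$ and on the sign of $z(0)-z_i$ that trajectories either stay in one region or cross the boundary $z=M$ exactly once before converging exponentially. Uniqueness of $z^*$ is likewise proved by showing that $z_1\le M$ forces $z_2\le M$ and $z_1>M$ forces $z_2>M$. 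Your approach replaces all of this with the single one-sided Lipschitz estimate $(R(z)-R(z^*))(z-z^*)\le-\nu(z-z^*)^2$, which absorbs the kink at $M$ and gives both global exponential convergence and uniqueness at once. This is cleaner and would extend immediately to more general monotone drifts, whereas the paper's explicit solution yields the sharper rate $\nu+\mu$ when the trajectory stays below $M$, and the concrete formulas for $z_1,z_2$. The identification of $z^*$ with the fixed-point equation is essentially the same computation in both proofs.
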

In the proof of Proposition~\ref{prop:fluid approximation}, we shall see that if $z(0)=z^*$ then $z(t)=z^*$ for any $t\geq 0$, i.e., $z^*$ is the unique invariant point of \eqref{eq:fluid limit}.
The point $z^*$  can be view as an approximation of the expected number of uncharged EVs in the system for the original (stochastic) model. Observing that
$\E{ \min \{D , B \max\{ 1, \frac{ z^*}{M}\}\}}$
is the actual sojourn time of an EV in the system and that the quantity $(\lambda\Min \nu K)$ plays the role of the arrival rate,  \eqref{eq:fluid proxy1} can be seen as a version of Little's law. Further, if we allow a processor sharing discipline and infinity many servers (i.e., $L(\cdot) \equiv 1$ and $K=\infty$), then \eqref{eq:fluid proxy1} reduces to
\cite[Equation 4.1]{gromoll2006impact}.
\begin{remark}
We shall see in the proof of Proposition~\ref{prop:fluid approximation} that the invariant point $z^*$ has a simpler form than \eqref{eq:fluid proxy1} but the latter holds much more generally.
If the random variables $B$ and $D$ are generally distributed and possibly dependent with
$\E{B\Min D}<\infty$, then \eqref{eq:fluid proxy1} still holds. The mathematical analysis then requires the use of measure-valued processes, which is beyond of the scope of the current work; for a heuristic approach see \cite{aveklourisstochastic}. Thus, we present the proofs only under Markovian assumptions.
\end{remark}

To ensure that $z^*$ is indeed a fluid approximation, we show that we can interchange the fluid and the steady-state limits. First, note that $Z(\cdot)$ has a limiting distribution. To see this, observe that $Z(\cdot)$ is bounded almost surely from above by the queue length of an Erlang A queue with $M$ servers and infinite buffer. Alternatively, we can bound it by the queue length of an $M/G/\infty$ queue. Now, using the same arguments as in \cite{remerova14}, we conclude that $Z(\cdot)$ is a regenerative process and that there exists a stationary limit, $Z(\infty)$. The next proposition says that the stationary scaled sequence of random variables converges to the unique invariant point $z^*$.

\begin{proposition}\label{prop:interchange}
The stationary fluid scaled sequence of random variables $\frac{Z^n(\infty)}{n}$ is tight and
$\frac{Z^n(\infty)}{n} \overset{d}\rightarrow z^*$, as $n \rightarrow \infty$.
\end{proposition}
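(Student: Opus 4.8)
The plan is to prove Proposition~\ref{prop:interchange} by establishing tightness of the stationary scaled sequence $\{Z^n(\infty)/n\}$ and then identifying every subsequential limit as the deterministic point $z^*$, using an interchange-of-limits argument. Since $z^*$ is a constant, convergence in distribution to $z^*$ is equivalent to convergence in probability, so it suffices to show that any weak limit point is the Dirac mass at $z^*$.

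First I would establish tightness. As noted in the paragraph preceding the proposition, $Z^n(\cdot)$ is stochastically dominated, uniformly in $n$ after scaling, by the queue length in an Erlang-A system with $nM$ servers and arrival rate $n\lambda$ (equivalently, by an $M/G/\infty$ queue with arrival rate $n\lambda$ and service rate governed by $\nu$, since uncharged EVs leave at rate at least $\nu$ each). For the $M/M/\infty$ (or $M/G/\infty$) bound, the stationary queue length is Poisson with mean $n\lambda/\nu$, so $Z^n(\infty)/n \leq_{st} \mathrm{Pois}(n\lambda/\nu)/n$, whose law is tight (indeed concentrates at $\lambda/\nu$) by the law of large numbers. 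Hence $\{Z^n(\infty)/n\}$ is a tight sequence of $[0,K]$-valued random variables; in fact it lives in the compact set $[0,K]$ by construction, so tightness is immediate, but the domination also gives the uniform integrability we will want when passing expectations through limits.

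Next I would carry out the interchange. Fix a subsequence along which $Z^n(\infty)/n \overset{d}\rightarrow \zeta$ for some $[0,K]$-valued random variable $\zeta$; by Skorokhod's representation theorem realize this convergence almost surely on a common probability space. Start the $n^{\text{th}}$ prelimit system in its stationary regime, i.e.\ with initial condition $Z^n(0) \overset{d}= Z^n(\infty)$ and $Q^n(0) \overset{d}= Q^n(\infty)$; note that in stationarity $Q^n(\infty)/n \to K$ since $Q^n$ is an Erlang-loss queue with $nK$ servers and offered load $n\lambda/\nu$ which, in an overloaded or critically loaded scaling, fills up — more carefully, we only need $Q^n(0)/n \overset{d}\rightarrow K$, which holds because $Q^n(\infty) \le nK$ always and, under the scaling $\lambda \ge \nu K$ implicit in the fluid ODE (the term $\lambda \Min \nu K$), the loss system is saturated so $Q^n(\infty)/n \to K$. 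Then Proposition~\ref{prop:fluid limit} applies conditionally on the initial data: $Z^n(\cdot)/n \overset{d}\rightarrow z(\cdot)$ where $z(\cdot)$ solves \eqref{eq:fluid limit} with random initial value $z(0) = \zeta$. By stationarity of the prelimit, $Z^n(t)/n \overset{d}= Z^n(\infty)/n$ for every fixed $t$, so the limit process satisfies $z(t) \overset{d}= \zeta$ for all $t \ge 0$. But Proposition~\ref{prop:fluid approximation} says that for \emph{any} deterministic starting point in $[0,K]$ the solution converges to $z^*$ as $t\to\infty$; conditioning on $\zeta$ and letting $t\to\infty$ forces $\zeta = z^*$ almost surely. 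Since the subsequence was arbitrary and every limit point equals the constant $z^*$, the whole sequence converges: $Z^n(\infty)/n \overset{d}\rightarrow z^*$.

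The main obstacle I anticipate is making the conditional application of Proposition~\ref{prop:fluid limit} rigorous when the initial condition is random and is itself only the weak limit of the prelimit stationary laws. One must check that the convergence $Z^n(0)/n \overset{d}\rightarrow \zeta$ together with $Q^n(0)/n \overset{d}\rightarrow K$ is genuinely in the joint sense required to invoke the functional limit theorem, and that the map from initial condition to fluid trajectory is continuous (which it is, by Lipschitz continuity of $R(\cdot)$ and Gr\"onwall's inequality), so that $z(\cdot)$ depends measurably and continuously on $\zeta$. A secondary technical point is justifying that the stationary $Q^n$ indeed satisfies $Q^n(\infty)/n \to K$ under the operative scaling regime; this follows from standard Erlang-loss asymptotics but should be stated, since Proposition~\ref{prop:fluid limit} is quoted with the hypothesis $Q^n(0)/n \overset{d}\rightarrow K$. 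Everything else — tightness, the stationarity identity $z(t)\overset{d}=\zeta$, and the final contradiction-free identification $\zeta = z^*$ — is then routine.
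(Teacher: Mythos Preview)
Your proposal is correct and follows essentially the same route as the paper: tightness from the a.s.\ bound $Z^n(\infty)/n\le K$, start the prelimit in stationarity, extract a subsequential limit, identify it as an invariant point of the fluid ODE, and conclude by uniqueness of $z^*$. You are more explicit than the paper about invoking Proposition~\ref{prop:fluid limit} and about the random-initial-condition issue; the one wrinkle you flag---that Proposition~\ref{prop:fluid limit} is stated under $Q^n(0)/n\to K$, which need not hold in stationarity when $\lambda<\nu K$---is real (the paper glosses over it too), but it is harmless since in that regime blocking is asymptotically negligible and the same fluid ODE with $\lambda\Min\nu K=\lambda$ emerges.
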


Note that the arrival rate in an Erlang loss queue is known and it is equal to
$\lambda (1-B(\lambda/\nu,K))$, where $B(\lambda/\nu,K)$ is the blocking probability in a loss system with $K$ servers and traffic intensity $\lambda/\nu$.
Furthermore, $\lambda (1-B(\lambda/\nu,K))$ is asymptotically exact for our fluid approximation in the sense that
$\lambda (1-B(n\lambda/\nu,nK))\rightarrow
(\lambda\Min \nu K)$, as $n \rightarrow \infty$.
To improve the fluid approximation, we replace $(\lambda \Min\nu K)$ by
 $\lambda (1-B(\lambda/\nu,K))$, leading to
\begin{equation}\label{eq:fluid proxy}
z^* = \lambda (1-B(\lambda/\nu,K)) \E{ \min \{D , B \max\{ 1, \frac{ z^*}{M}\}\}}.
\end{equation}
Heuristically, we assume that an EV sees the system in stationarity throughout its sojourn and we use Little's law and a version of the snapshot principle; \cite{Reiman82}.

Having found the fluid approximation for the number of uncharged EVs in the charging station, we derive the fluid approximation for the  fraction of EVs that get successfully charged.
Let $\overline{P}_s$ denote the probability that an EV leaves the parking lot with fully charged battery in the fluid model. It is given by
$\overline{P}_s=\Prob{D > B \max\{ 1, z^*/M\}}$, where $z^*$ is the unique solution of \eqref{eq:fluid proxy}. Under our assumptions, the explicit expression for this probability can be found. That is,
\begin{equation*}\label{eq:fluidFrac}
\overline{P}_s=
\begin{cases}
\frac{\mu}{\nu+\mu},
& \mbox{$z^*\leq M$}, \\
\frac{\mu M}{\lambda (1-B(\lambda/\nu,K))},
& \mbox{$z^*> M$}.
\end{cases}
\end{equation*}
Note that in region $\{z^*\leq M\}$ the fraction of fully charged EVs is nothing else that the probability of minimum of two exponential random variables.

We now focus on the fluid approximation for the number of uncharged EVs when the parking lot is full; Section~\ref{sec:full parking lot}. Analogous to Definition~\ref{def:fluid model}, we can define a fluid model, which we call $z_f(\cdot)$.

\begin{proposition}\label{prop:fluidfull}
Assume that the scaled parking spaces and  the scaled power capacity are given
by  $K^n=K n$ and $M^n=M n$, respectively. If $\frac{\Zfn(0)}{n} \overset{d}\rightarrow z_f(0)$, we have that
 $\frac{\Zfn(\cdot)}{n} \overset{d}\rightarrow z_f(\cdot)$, and $z_f(t)\rightarrow \zf$ as $n$ and $t$ go
to infinity. Further, the limits can be interchanged and  $\zf$ is given by the following formula
\begin{equation}\label{eq:fluid}
\zf=
  \begin{cases}
    \frac{\nu K}{\nu +\mu}, & \hbox{if } \zf\leq M, \\
    \frac{\nu K-\mu M}{\nu}, & \hbox{if } \zf>M.
  \end{cases}
\end{equation}
\end{proposition}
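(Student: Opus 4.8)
The plan is to mirror the argument already used for Propositions~\ref{prop:fluid limit}, \ref{prop:fluid approximation}, and \ref{prop:interchange}, but now for the simpler one-dimensional birth-death process $\Zfn(\cdot)$ whose rates, from Section~\ref{sec:full parking lot}, are birth rate $\nu(K^n-\Zfn(t))$ and death rate $\mu(\Zfn(t)\Min M^n)$. First I would write the analogue of the stochastic dynamics \eqref{eq:numberU}: using independent Poisson processes $N_\nu$ and $N_\mu$,
\begin{equation*}
\Zfn(t)=\Zfn(0)+N_\nu\!\left(\int_0^t \nu(K^n-\Zfn(s))\,ds\right)-N_\mu\!\left(\int_0^t \mu(\Zfn(s)\Min M^n)\,ds\right).
\end{equation*}
Dividing by $n$, substituting $K^n=Kn$, $M^n=Mn$, and centering the Poisson processes by their compensators, I would write $\frac{\Zfn(t)}{n}=\frac{\Zfn(0)}{n}+\int_0^t\big(\nu(K-\tfrac{\Zfn(s)}{n})-\mu(\tfrac{\Zfn(s)}{n}\Min M)\big)ds+\varepsilon^n(t)$, where $\varepsilon^n(\cdot)$ collects the two centered (martingale) terms. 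The functional strong law of large numbers for the Poisson process (as in \cite{pang2007martingale}) together with the random-time-change theorem gives $\varepsilon^n\to 0$ uniformly on compacts in probability; combined with the Lipschitz continuity of the right-hand side $\tilde R(y):=\nu(K-y)-\mu(y\Min M)$ and a Gronwall estimate, this yields $\frac{\Zfn(\cdot)}{n}\overset{d}\to z_f(\cdot)$, where $z_f$ solves $z_f'(t)=\tilde R(z_f(t))$, $z_f(0)=\lim \Zfn(0)/n$. One should also check that $y\in[0,K]$ is invariant for this ODE (the drift points inward at $0$ and at $K$), so $z_f$ stays in the physical range, exactly as in the remark following Definition~\ref{def:fluid model}.

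Next I would analyze the ODE $z_f'=\tilde R(z_f)$ for its equilibrium and convergence. Since $\tilde R$ is continuous, strictly decreasing (slope $-\nu$ on $\{y>M\}$ and $-(\nu+\mu)$ on $\{y<M\}$), it has a unique zero $\zf$; solving $\nu(K-\zf)=\mu(\zf\Min M)$ in the two cases gives precisely \eqref{eq:fluid}: in the regime $\zf\le M$ one gets $\zf=\nu K/(\nu+\mu)$, and in the regime $\zf>M$ one gets $\zf=(\nu K-\mu M)/\nu$ (with the consistency condition $\nu K>\mu M$). Because $\tilde R$ is bounded away from zero in slope, $\frac{d}{dt}(z_f(t)-\zf)^2=2(z_f-\zf)\tilde R(z_f)\le -2\min\{\nu,\nu+\mu\}(z_f-\zf)^2=-2\nu(z_f-\zf)^2$, so $|z_f(t)-\zf|\le |z_f(0)-\zf|e^{-\nu t}$, i.e. exponentially fast convergence.

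Finally, for the interchange of limits I would argue as in Proposition~\ref{prop:interchange}. The process $\Zfn(\cdot)$ is an irreducible birth-death chain on the finite state space $\{0,1,\dots,K^n\}$, hence ergodic with a unique stationary distribution $\pi_f^n$; tightness of $\frac{\Zfn(\infty)}{n}$ is immediate because the state space is contained in $[0,K]$. Any weak limit point of $\frac{\Zfn(\infty)}{n}$ must be an invariant measure for the fluid dynamics (this is the standard consequence of the fluid limit plus stationarity, e.g. via the continuous mapping theorem applied to the shifted processes, combined with the exponential attraction to $\zf$ just established), and since $\zf$ is the unique invariant point, the limit is the point mass at $\zf$. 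Therefore $\frac{\Zfn(\infty)}{n}\overset{d}\to \zf$, which is what "the limits can be interchanged" means here. I would also note one can bypass this abstract argument entirely: from the explicit stationary distribution $\pf(\cdot)$ in Proposition~\ref{Prop:overloaded} (with $K,M$ replaced by $Kn,Mn$), a Laplace/Stirling asymptotic analysis of the product terms shows the mass concentrates at $n\zf$; either route works, and I would present the regenerative/invariant-measure argument as the main line since it parallels the rest of the paper.

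The main obstacle is the same delicate point as in the earlier propositions: justifying that a weak limit of the \emph{stationary} scaled variables is an invariant point of the fluid ODE, i.e. the interchange of the $t\to\infty$ and $n\to\infty$ limits. This needs either a uniform-integrability/tightness argument that lets one pass stationarity through the fluid limit, or the regenerative-process structure (the chain returns to state $0$, or to any fixed reference state, infinitely often with finite-mean cycles) as invoked via \cite{remerova14} before Proposition~\ref{prop:interchange}. Everything else — the FSLLN for Poisson processes, Gronwall, and the elementary ODE stability computation — is routine.
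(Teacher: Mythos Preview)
Your proposal is correct and is exactly the route the paper points to: the paper does not write out a proof of Proposition~\ref{prop:fluidfull} at all, stating only that it ``can be done by using a similar procedure as in the general case'' and then giving a heuristic Little's-law derivation of \eqref{eq:fluid}. Your sketch carries out precisely that similar procedure---martingale/FSLLN for the fluid limit, explicit ODE analysis for the equilibrium and exponential convergence, and the tightness/invariant-point argument for the interchange---so there is nothing to correct. One small notational point: in the paper's convention $N_{\nu}$ already has rate $\nu$, so the dynamics should read $N_{\nu}\bigl(\int_0^t (K^n-\Zfn(s))\,ds\bigr)$ rather than $N_{\nu}\bigl(\int_0^t \nu(K^n-\Zfn(s))\,ds\bigr)$; your drift $\tilde R$ is correct, so this is purely cosmetic.
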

We give a heuristic approach how we can derive \eqref{eq:fluid}, skipping the proof, which can be done by using a similar procedure as in the general case.
The intuition behind \eqref{eq:fluid} is as follows. The actual sojourn time (in steady-state) of an EV in the
system is given by
\begin{equation*}
\E{B \frac{\Zfn(\infty)}{\Zfn(\infty)\Min M}}=\E{B \left(\frac{Z_{f}^n(\infty)}{M^n}\Max 1\right)}.
\end{equation*}
By Little's law, we have that
\begin{equation*}
\E{\Zfn(\infty)}=\nu \E{K^n-\Zfn(\infty)} \E{B \left(\frac{\Zfn(\infty)}{M^n}\Max 1\right)}.
\end{equation*}
Dividing the last equation by $n$, yields
\begin{equation*}
\E{\frac{\Zfn(\infty)}{n}}=\nu \E{K-\frac{\Zfn(\infty)}{n}}
 \E{B \left(\frac{\Zfn(\infty)}{M n}\Max 1\right)}.
\end{equation*}
Now, taking the limit as $n$ goes to infinity, leads to
\begin{equation*}
\zf=
 \frac{\nu (K-\zf) \left(\frac{\zf}{M }\Max 1\right)}{\mu}=
 \frac{\nu (K-\zf)\zf }{\mu \left(\zf\Min M \right)}.
\end{equation*}
Finally, $\zf$ is given by the following fixed-point equation
\begin{equation*}
\mu \left(\zf\Min M \right) = \nu (K-\zf),
\end{equation*}
and solving the last equation leads to \eqref{eq:fluid}.

We shall see in the numerical examples in Section~\ref{sec:numerics} that the fluid approximation  is a good approximation
of the fraction of fully charged EVs
in most cases. However, especially in the underloaded  regime  and for small number of EV chargers, the error becomes larger. In the next section, we  move to diffusion approximations.

\subsection{Diffusion Approximations} \label{sec:DiffAp}

In this section, we show  diffusion limit theorems and diffusion approximations for the process describing
the number of uncharged and the total number of EVs in the parking lot (\emph{vector process}). To do it, we follow
the strategy set up in \cite{pang2007martingale} using the martingale representation.

First, we work in the Halfin-Whitt regime; Section~\ref{sec:HWT}.
Using the ``square-root staffing rule'' to scale the system parameters, we extend \cite[Theorem~7.1]{pang2007martingale} and we obtain a limit which is a reflected two-dimensional OU process with piecewise linear drift. Then, we derive an equation which characterizes its steady-state distribution, the so-called Basic Adjoint Relation (BAR). However, it turns out that the  computation of the steady-state distribution is a hard problem, which is beyond the scope of this paper and it remains an open problem. To overcome this difficulty we consider more tractable asymptotic regimes.

The second asymptotic regime we consider here is an overloaded regime; Section~\ref{sec:Difover}. Assuming that the process describing the total number of EVs is in an overloaded regime and using the ``square-root staffing rule'' to scale the total power capacity in the system, we can show that the scaled vector process converges weakly to a one-dimensional limit. Thus, we can compute its steady-state distribution.

Last, in Section~\ref{sec:DifsmallparkingT}, we focus on the case that the parking times of the EVs are sufficiently large. We give a heavy traffic limit and a
two-dimensional approximation for the vector process.

\subsubsection{Diffusion approximation in Halfin-Whitt regime  }\label{sec:HWT}
The main goal in this section is to prove a two-dimensional diffusion limit for the vector process. For $-\infty < \beta, \kappa< \infty$, consider the following scaling:
\begin{enumerate}
\item  $\lambda^n=n(\nu+\mu)$,
\item $M^n=\frac{\lambda^n}{\nu+\mu}+\beta\sqrt{n}$,
\item
$K^n=\frac{\lambda^n}{\nu}+\kappa \sqrt{n}$.
\end{enumerate}
Define a sequence of diffusion-scaled processes
$\hat Q^n(\cdot):=\frac{Q^n(\cdot)-\frac{\lambda^n}{\nu}}{\sqrt{n}}$
and
$\hat Z^n(\cdot):=\frac{Z^n(\cdot)-\frac{\lambda^n}{\nu+\mu}}
{\sqrt{n}}$. The allocation function in the $n^{\text{th}}$ system is given by
$L^n(Z^n(\cdot)):= Z^n(\cdot)\Min M^n$. We can then prove the following theorem.

\begin{theorem}[Diffusion limit in Halfin-Whitt regime]\label{Th:diffusion}
	Supposing that $(\hat Z^n(0),\hat Q^n(0)) \overset{d} \rightarrow
	(\hat Z(0),\hat Q(0))$ as $n \rightarrow \infty$, then
	$(\hat Z^n(\cdot),\hat Q^n(\cdot))$
$\overset{d} \rightarrow (\hat Z(\cdot),\hat Q(\cdot))$.
The limit satisfies the following two-dimensional stochastic differential equation
\begin{align}\label{eq:SDE}
	\left(
      \begin{array}{cc}
	d\hat Z(t) \\
	d\hat Q(t)
	\end{array}\right)
	=
	\left(
      \begin{array}{cc}
	b_1(\hat Z(t)) \\
	b_2(\hat Q(t))
	\end{array}\right)
	dt
	+\left(
      \begin{array}{cc}
	\sqrt{2 (\nu+\mu)}& 0  \\
	0 & \sqrt{2 (\nu+\mu)}
	\end{array}\right)
	\left(
      \begin{array}{cc}
	dW_{\hat Z}(t) \\
	dW_{\hat Q}(t)
	\end{array}\right)
	-
	\left(
      \begin{array}{cc}
	d\hat Y(t) \\
	d\hat Y(t)
	\end{array}\right),
\end{align}
where $b_1(x)=-\mu (x \Min \beta)-\nu x$ and $b_2(x)=-\nu x$. Further, $W_{\hat Z}(\cdot)$ and
$W_{\hat Q}(\cdot)$ are driftless, univariate Brownian motions such that
$2(\nu+\mu)\E{W_{\hat Z}(t) W_{\hat Q}(t)}=
(2\nu+\mu) t$. In addition, $\hat Y(\cdot)$ is the unique nondecreasing
nonnegative process such that \eqref{eq:SDE} holds and
$\int_{0}^{\infty} \ind{{\hat Q}(t)<\kappa}d \hat Y(t)=0$.
\end{theorem}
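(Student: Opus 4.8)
The plan is to follow the martingale-representation strategy of \cite{pang2007martingale}, adapted to our two-layered setting with a state-dependent (piecewise linear) service rate coming from the allocation function $L^n$. First I would rewrite the defining equations \eqref{eq:numberQ} and \eqref{eq:numberU} in terms of centered Poisson processes $\tilde N_r(t):=N_r(t)-rt$, splitting each Poisson integral into a compensator (drift) part and a martingale part. Applying the diffusion scaling $\hat Z^n=(Z^n-\lambda^n/(\nu+\mu))/\sqrt n$ and $\hat Q^n=(Q^n-\lambda^n/\nu)/\sqrt n$, and using the scaling choices (1)--(3), the compensator terms should produce, after cancellation of the $O(\sqrt n)$ constants, the piecewise linear drifts $b_1(x)=-\mu(x\wedge\beta)-\nu x$ and $b_2(x)=-\nu x$, together with a reflection term. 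The key algebraic point is that $L^n(Z^n)=Z^n\wedge M^n$, so $\frac{1}{\sqrt n}\big(L^n(Z^n(t))-\tfrac{\lambda^n}{\nu+\mu}\big)=\hat Z^n(t)\wedge(\beta+ o(1))$; this is exactly where the $x\wedge\beta$ nonlinearity enters. The indicator $\ind{Q^n(s)<K^n}$ must be handled by the standard Erlang-loss argument: $Q^n$ is a reflected process at the boundary $\kappa$ in the scaled picture, and the cumulative lost mass $\frac{1}{\sqrt n}N_{\lambda^n}(\int_0^t\ind{Q^n(s)=K^n}ds)$ converges to the regulator $\hat Y(\cdot)$.

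Second, I would establish tightness of $(\hat Z^n,\hat Q^n,\hat Y^n)$ in $\mathcal D[0,T]^2\times\mathcal D[0,T]$. For the martingale parts, the functional CLT for Poisson processes (or the martingale FCLT of \cite{pang2007martingale}) gives convergence of the scaled centered Poisson integrals to time-changed Brownian motions; since the time-changes $\frac1n\int_0^t Z^n(s)ds$, $\frac1n\int_0^t C^n(s)ds$, $\frac1n\int_0^t L^n(Z^n(s))ds$ all converge (by the fluid/$C$-tightness estimates) to deterministic linear time-changes $\lambda^n t/(\nu+\mu)\cdot\frac1n\to(\ldots)t$, the limiting diffusion coefficient is the constant $\sqrt{2(\nu+\mu)}$ in each coordinate. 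The correlation structure $2(\nu+\mu)\E{W_{\hat Z}(t)W_{\hat Q}(t)}=(2\nu+\mu)t$ comes from the fact that the arrival process $N_{\lambda^n}$ and the uncharged-departure process $N_{\nu,1}$ are shared between the $Q$- and $Z$-equations, while $N_\mu$ appears only in $Z$ and $N_{\nu,2}$ only in $Q$: adding the variance contributions of the common terms yields the stated off-diagonal entry. Tightness of $\hat Y^n$ follows from the oscillation bound for one-sided reflection maps, using that $\hat Q^n$ stays bounded above by $\kappa$ plus a jump of size $1/\sqrt n$.

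Third, I would identify the limit. Any weak limit point $(\hat Z,\hat Q,\hat Y)$ satisfies the integral version of \eqref{eq:SDE} with $\hat Y$ nondecreasing, $\hat Q(t)\le\kappa$, and the complementarity condition $\int_0^\infty\ind{\hat Q(t)<\kappa}\,d\hat Y(t)=0$; the continuous-mapping argument for the one-dimensional Skorokhod reflection (applied to the $\hat Q$-coordinate, which is autonomous given the arrival martingale) pins down $\hat Y$ uniquely, and then $\hat Z$ is determined by its own equation driven by the same $\hat Y$. Uniqueness in law of the limiting SDE follows because the drift $b_1$ is Lipschitz (piecewise linear) and $b_2$ is linear, so \eqref{eq:SDE} has a pathwise-unique strong solution; hence all subsequential limits coincide and the full sequence converges.

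The main obstacle I expect is the joint treatment of the reflection at $\hat Q=\kappa$ together with the state-dependent drift in $\hat Z$: one must show that the lost-arrival term contributes the \emph{same} regulator $\hat Y$ to both coordinates (so that $\hat Z$ and $\hat Q$ are reflected "in the same direction" at the boundary $\{\hat Q=\kappa\}$), and that near this boundary the approximation $L^n(Z^n)\approx Z^n\wedge M^n$ remains valid uniformly. Controlling the interplay of the two boundary behaviors — the loss boundary $\{\hat Q=\kappa\}$ and the kink $\{\hat Z=\beta\}$ of the drift — while keeping all the scaled time-changes asymptotically deterministic, is the technically delicate part; everything else is a fairly routine application of the martingale FCLT and the continuous-mapping theorem.
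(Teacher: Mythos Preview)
Your proposal is correct and follows essentially the same route as the paper: martingale representation of the dynamics, fluid-scale convergence of the random time changes to deterministic linear functions, application of the martingale FCLT to the centered Poisson terms, and then the continuous-mapping theorem via a continuity/uniqueness result for the reflected system. In particular, your observation that the $\hat Q$-equation is autonomous (so $(\hat Q,\hat Y)$ is determined first by the one-dimensional Skorokhod map, and then $\hat Z$ is obtained from its own integral equation driven by the same $\hat Y$) is exactly the content of the paper's Proposition~\ref{Th: existence}; the ``main obstacle'' you flag is therefore not a genuine difficulty, and note also that $L^n(Z^n)=Z^n\wedge M^n$ holds as an equality (with $(M^n-n)/\sqrt n=\beta$ exactly), so no uniform approximation near the boundary is needed.
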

Adapting \cite[Section~7.3]{pang2007martingale}, we can show that the last theorem also holds if we allow the arrival process to be a general stochastic process under the assumption that it is satisfies the functional central limit theorem.

The proof of Theorem~\ref{Th:diffusion} is given in Section~\ref{sec:HW} and is organized as
follows.
\begin{enumerate}
  \item We first establish a continuity result and show the existence and uniqueness
  of the candidate limit. (Proposition~\ref{Th: existence}.)
  \item We then rewrite the system dynamics using appropriate martingales and filtrations;
  see equations \eqref{Eq:difQ}, \eqref{Eq:difU}, and Proposition~\ref{Pr:Martingales}.
  \item Next, we show in Proposition~\ref{Pr:fluid limit} that the corresponding fluid-scaled
      processes converge weakly to deterministic functions.
  \item Last, the proof of Theorem~\ref{Th:diffusion} is done by applying the martingale
      central limit theorem in \cite{ethier1986markov} and Proposition~\ref{Th: existence}.
\end{enumerate}

Next, we focus on characterizing the joint steady-state distribution of the
limit given by \eqref{eq:SDE}.
Our approach is to find a functional equation which describes the joint steady-state distribution,
the so-called \emph{Basic Adjoint Relation}. The next step to use the BAR in order to obtain a key
relation for the moment generating function of the vector proces. The piecewise linear drift and the existence of the reflection in \eqref{eq:SDE} makes the key relation complicated and its analysis is beyond of the scope of this paper.

For any $t\geq 0$, we know that  $\hat Z(t)\in \R$ and $\hat Q(t)\leq \kappa$. It is more convenient to transform the previous
processes such that $\hat Z(t)\in \R$ and $\kappa-\hat Q(t)\geq
0$. To do so, we recall that $b_2(x)=-\nu x$. Thus,
the diffusion limit can be written in the following integral form -- see \eqref{eq:SDE}:
\begin{align*}
\hat Z(t) &= \hat Z(0)+ \int_{0}^{t} b_1(\hat Z(s))ds+
\sqrt{2(\nu+\mu)}W_{\hat Z}(t)-\hat Y(t),\\
 \hat Q(t) &= \hat Q(0)- \nu\int_{0}^{t} \hat Q(s)ds+
 \sqrt{2(\nu+\mu)}W_{\hat Q}(t)-\hat Y(t),
\end{align*}
where $\hat Y(\cdot)$ is defined in Theorem~\ref{Th:diffusion}.
Multiplying by $(-1)$, adding and subtracting the terms $\kappa$, $\nu \kappa t$ in the last equation,
we obtain
\begin{align*}
\kappa- \hat Q(t) =\kappa-  \hat Q(0)
+\nu\int_{0}^{t}( \kappa+\hat Q(s)-\kappa)ds
-\sqrt{2(\nu+\mu)}W_{\hat Q}(t)+\hat Y(t).
\end{align*}
Defining $\hat Q_{\kappa}(t):=\kappa-\hat Q(t)$ for $t \geq 0$, we have that
\begin{align*}
\hat Q_{\kappa}(t) &= \hat Q_{\kappa}(0)+
\int_{0}^{t} b_{\kappa}(\hat Q_{\kappa}(s))ds-
 \sqrt{2(\nu+\mu)}W_{\hat Q}(t)
 +\hat Y(t),
\end{align*}
where $b_{\kappa}(x)=\nu (\kappa-x)$. The process $\hat Q_{\kappa}(t)$ represents the number
of available spots in the parking lot  at time $t\geq 0$ (after scaling and after taking the limit as $n$ goes to infinity). Furthermore, $\hat Y(t)$ increases
if and only if
$\hat Q_{\kappa}(t)=0$.
Define
$\blt X(\cdot):=(\hat Z(\cdot),\hat Q_{\kappa}(\cdot) )$
and note that each component of $\blt X(\cdot)$
is a semimartingale.
Let $G=\{\blt{x}:=(x_1,x_2)\in \R^2: x_2>0\}$. The boundary and the closure of $G$ are given by
$\partial G=\{\blt{x}\in \R^2: x_2=0\}$ and $\bar{G}=G \bigcup \partial G$, respectively.
Now, observe that
$\blt {X}(\cdot)\in \bar{G}$ for any $t\geq 0$.
A geometrical representation of the space $G$ and its boundary is shown in the next figure.
\begin{figure}[h]
\centering
\includegraphics[scale=0.5]{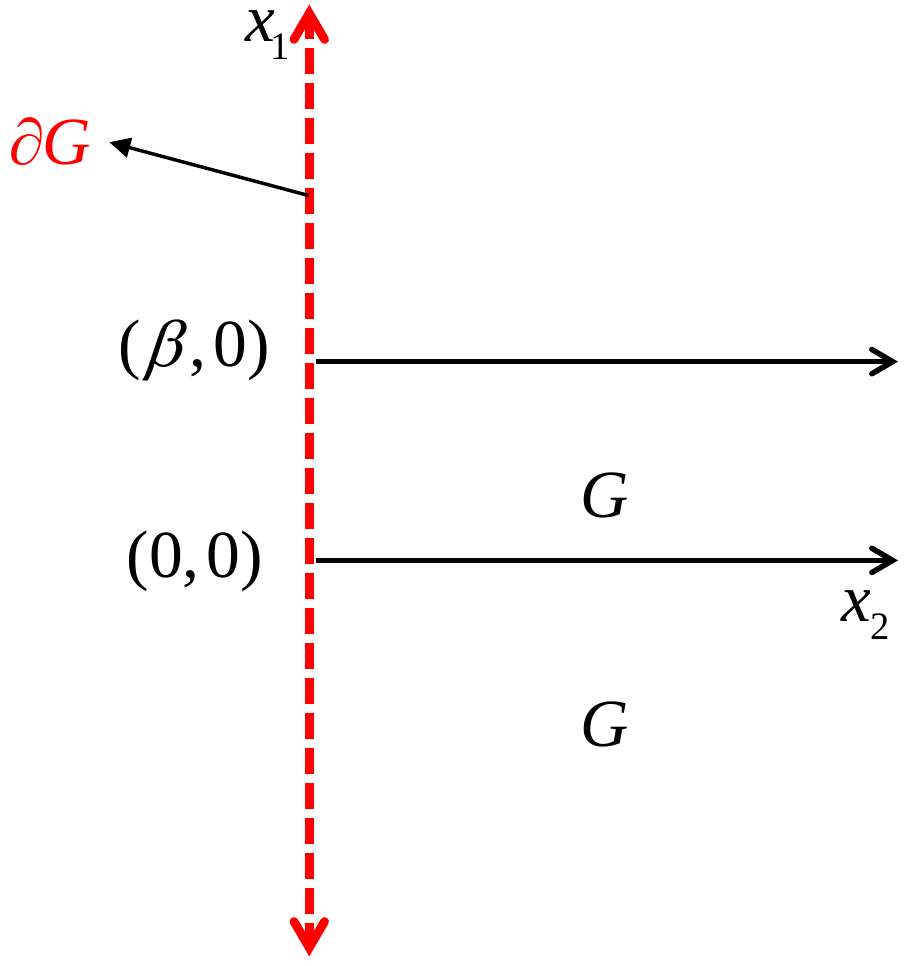}
\caption{The spaces $G$ and its boundary for $\beta >0$.}
\end{figure}

Before we continue the analysis of deriving the BAR, we note some properties for the
process $\hat{Y}(\cdot)$, which is known as \emph{regulator}. It is known that $(\hat{Q}(\cdot),
\hat{Y}(\cdot))$ satisfies a one-dimensional reflection mapping (or one-dimensional Skorokhod
problem).
The regulator $\hat{Y}(\cdot)$ is continuous, nondecreasing and has the property
$$\int_{0}^{\infty} \ind{{\hat Q}_{\kappa}(t)>0}d \hat Y(t)=0,$$
or equivalently for all $t\geq 0$,
$$\hat Y(t)=\int_{0}^{t} \ind{\hat {Q}_{\kappa}(t)=0}d \hat Y(s).$$
By \cite[Theorem 6.1]{chen2001fundamentals}, almost all the paths of the regulator are Lipschitz
continuous on the space
$\{x(\cdot) \in D(0,\infty), x(0)\geq 0\}$ under the uniform topology and hence absolutely
continuous.
From the latter, it follows that $\hat{Y}(\cdot)$ is of bounded variation.
Moreover, by \cite[Theorem 2.2]{xing2009} there exists a (positive) constant $w$ such that
\begin{equation}\label{eq:propertyR}
\hat Y(t)= w\int_{0}^{t} \ind{ \hat {Q}_{\kappa}(s)=0}ds.
\end{equation}
For more details we refer to \cite[Lemma 3.1]{bo2011} and \cite{karatzas1991}.

In the sequel, we focus on deriving a functional equation which characterizes the steady-state
distribution $\pi(\cdot,\cdot)$ of the process $\{\blt {X}(t), t\geq 0\}$, provided that it exists.
To handle the boundary of the space $G$, we define a measure $\sigma$ on $(\bar G,
\mathfrak{B}(\bar G))$ given by
\begin{equation}\label{eq:boundary measure}
\sigma(B)= \mathbb{ E}^{\pi}[\int_{0}^{1} \ind{\blt X(s) \in B} d \hat Y(s)], \ B \in
\mathfrak{B}(\bar G).
\end{equation}
Further, it follows by \eqref{eq:propertyR} that $\sigma(B)\leq w \mathbb{E}^{\pi}[ \hat
Y(1)]<\infty$, which yields that $\sigma$ is a finite measure. Moreover, we define it (for simplicity)
in $\bar G$ but as $\hat Y(\cdot)$ increases only on the boundary $\partial G$, the measure
concentrates on the boundary. In other words, $\sigma$ consists a finite boundary measure.

Using results from \cite{kang2014} and It\^{o} calculus, the BAR takes the following form
\begin{equation}\label{eq:BAR}
  \begin{split}
  \int_{\bar G}\mathcal{L}f(\blt x) \pi (d \blt x)
 -
 \int_{\bar G}\frac{\partial f}{\partial x_1}(\blt x) \sigma(d \blt x)
 +
 \int_{\bar G}\frac{\partial f}{\partial x_2}(\blt x) \sigma(d \blt x)=0,
\end{split}
\end{equation}
where the boundary measure $\sigma$ is defined in \eqref{eq:boundary measure} and $\mathcal{L}$
is the second order operator, i.e.,
\begin{align*}
  \mathcal{L}f(\blt x)= b_1(x_1) \frac{\partial f}{\partial x_1}(\blt x)
  +b_{\kappa}(x_2) \frac{\partial f}{\partial x_2}(\blt x)
  +(\nu+\mu)
  \frac{\partial^2 f}{\partial x_1 \partial x_1}(\blt x)
  +(\nu+\mu)
  \frac{\partial^2 f}{\partial x_2 \partial x_2}(\blt x)
  \\-(2\nu+\mu)
  \frac{\partial^2 f}{\partial x_1 \partial x_2}(\blt x).
\end{align*}

The next step is to derive a key relation between the moment generating functions of $\pi$ and $\sigma$.
Let us define the two-dimensional moment generating function (MGF) of
$\pi$,
\begin{equation*}\label{MGFp}
G^{\pi}(\blt \theta):=\mathbb{ E}^{\pi}[e^{\blt \theta \cdot \blt X(\infty) }]=
\int_{\bar G} e^{\blt \theta \cdot \blt x } \pi(d\blt{x}),
\end{equation*}
for $\blt{\theta}:=(\theta_1,\theta_2)\in \R^2$, and
$\blt \theta \cdot \blt x:=\theta_1x_1+\theta_2x_2$.
In the same way, we define the one-dimensional MGF of $\sigma$,
\begin{equation*}\label{MGFs}
G^{\sigma}(\theta_1):=
\int_{\bar G} e^{\theta_1 x_1 } \sigma(d \blt{x}).
\end{equation*}
Further, we assume that there exists a set $\Theta$ such that $\Theta=\{\blt{\theta}\in \R^2:
 G^{\pi}(\blt \theta)<\infty,\ G^{\sigma}(\theta_1)<\infty \}$. Assuming that
 $\blt{\theta}\in\Theta$ and adapting \cite[Lemma~4.1]{dai2011ST}, we derive the following key relation
\begin{align}\label{eq:GF}
 -\mu \beta\theta_1 \mathbb{E}^{\pi}
 [\ind{Z(\infty)>\beta} e^{\blt X  (\infty) \cdot \blt \theta}]
 -\mu \theta_1 \mathbb{E}^{\pi}
 [Z(\infty)\ind{Z(\infty)\leq \beta} e^{\blt X(\infty) \cdot \blt \theta}]
 -\nu \theta_1 G_{\theta_1}^\pi(\blt \theta)\nonumber\\
 -\nu \theta_2 G_{\theta_2}^\pi(\blt \theta)
+\gamma(\blt{\theta})
  G^\pi(\blt \theta)
  +(\theta_2-\theta_1)
  G^\sigma(\theta_1)=0,
\end{align}
where $\gamma(\blt{\theta})=\nu \kappa \theta_2
+(\nu+\mu) (\theta_1^2+\theta_2^2)
-(2\nu+\mu) \theta_1 \theta_2$ and $G_{\theta_i}^\pi(\cdot)$ denotes the derivative with respect
to $\theta_i$,\ $i=1,2$.

Equation \eqref{eq:GF} is rather complicated due to the piecewise linear term and the existence of the boundary measure.
Although the analysis of \eqref{eq:GF} is beyond of the scope of the current paper, we conjecture that the Wiener-Hopf method \cite{cohen1975} and boundary value techniques \cite{cohen2000} may be applied.

It turns out that \eqref{eq:GF} remains quite complicated even if we assume $K=\infty$, i.e., no boundary measure. Contrary to the one-dimensional case \cite{browne1995}, the steady-state distribution of $(Z(\cdot), Q(\cdot))$ cannot be written as a  linear combination of two distributions. To see this, define $\pi_{-}(\blt {x})$ to be a bivariate Normal distribution with mean vector
$\blt{\mu}_{-}= (0,0)$ and covariance matrix
$ \blt {\Sigma}_{-}=\left(
                     \begin{array}{cc}
                       1 & 1 \\
                        1 & \frac{\nu+\mu}{\nu} \\
                     \end{array}
                   \right)$.
In addition, let $\pi_{+}(\blt {x})$ be a bivariate Normal distribution with mean vector %
$\blt{\mu}_{+}=(- \mu \beta/\nu,0) $ and covariance matrix
$\blt {\Sigma}_{+}=\left(
                     \begin{array}{cc}
                       \frac{\nu+\mu}{\nu} & \frac{2\nu+\mu}{2\nu} \\
                        \frac{2\nu+\mu}{2\nu} & \frac{\nu+\mu}{\nu} \\
                     \end{array}
                   \right)$.
The distributions $\pi_{-}$ and $\pi_{+}$ correspond to the solution of the
Kolmogorov forward equations (or Fokker--Planck equation) of \eqref{eq:SDE} with drift function
$-(\nu+\mu)x$ and $-\mu \beta-\nu x $, respectively.
Adapting  \cite{browne1995}, we define
$\pi_{\infty}(\blt {x}):=
c_1 \pi_{-}(\blt {x})\ind{x_1\leq  \beta}+
c_2 \pi_{+}(\blt {x})\ind{x_1> \beta}$,
where the constants $c_1$, $c_2$ are given by \cite[Eqs. 3.9, 3.10]{fleming1994heavy}. Namely, we have that
\begin{equation*}
 c_1= \left(\Phi(\beta)+
             \sqrt{\frac{\nu+\mu}{\nu}} \exp\left
             \{\frac{\mu \beta^2}{2\nu}\right\} \left(
             1-\Phi\left(
             \sqrt{\frac{\nu+\mu}{\nu}}\beta\right)
              \right)
       \right)^{-1},
\end{equation*}
\begin{equation*}
 c_2= c_1\sqrt{\frac{\nu+\mu}{\nu}}\exp\left\{\frac{\mu \beta^2}{2\nu}\right \},
\end{equation*}
where $\Phi(\cdot)$ represents the cumulative probability function of the standard Normal distribution. It can be easily verified that $\pi_{\infty}$ is indeed a probability distribution but it does not satisfy the correct marginal distribution of $\hat{Q}(\infty)$,
 as it is shown in Figure~\ref{fig:pdfs}.
It is well-known that $\hat{Q}(\infty)$ follows
a Normal distribution with zero mean and variance $\frac{\nu+\mu}{\nu}$.
For a discussion on this topic see \cite{dai2011}.
\begin{figure}[!h]
\centering
\includegraphics[scale=0.8]{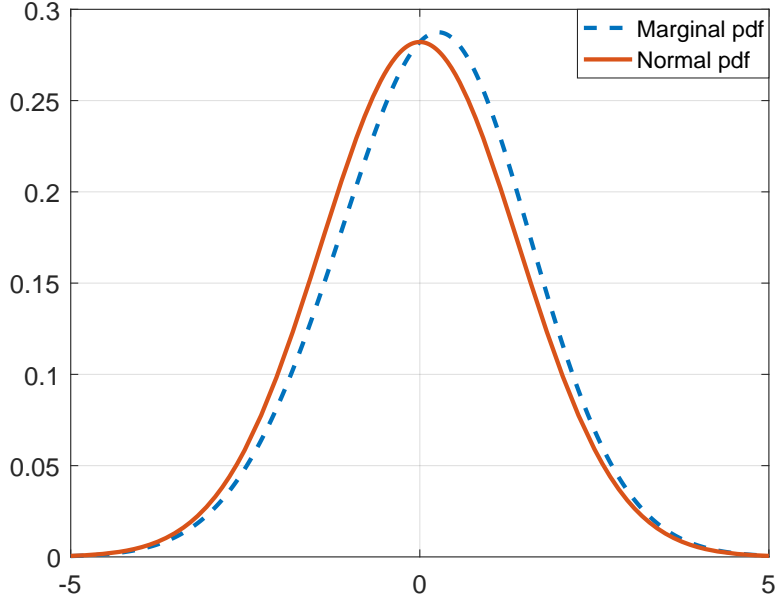}
\caption{Marginal pfd of $\hat{Q}(\infty)$ and Normal$(0,\frac{\nu+\mu}{\nu})$ pdf
for $\beta =0$ and $K=\infty$.}
\label{fig:pdfs}
\end{figure}
In the sequel, we move in different asymptotic regimes in order to overcome this difficulty.

\subsubsection{Diffusion approximation in an overloaded parking lot}
\label{sec:Difover}
In this section, we study an overloaded parking lot. First, we show a diffusion limit in the case that the parking lot is always full (see Section~\ref{sec:full parking lot}). We then show that the diffusion scaled vector process for the original system collapses to the one-dimensional limit in this case. Our motivation in this section comes from results in \cite{aldous1986}. Specifically, the authors there show that under an appropriate scaling (including parameter and time scaling), the number of empty spaces in an overloaded parking lot behaves like an $M/M/1$ queue. However, here we need a modification of this result by dropping the time scaling.

First, we define the dynamical equation that describes the evolution of the process of the number of uncharged EVs when the parking lot is always full. Let $N_{\nu}^f(\cdot)$ and $N_{\mu}^f(\cdot)$ denote two independent Poisson processed with rates
$\nu$ and $\mu$, respectively. For any $t\geq 0$, we have that
\begin{equation}\label{eq:newZ}
  \Zf(t)= \Zf(0)+N_{\nu}^f\left( \int_{0}^{t}(K-\Zf(s))ds\right)
-N_{\mu}^f\left( \int_{0}^{t}\Zf(s)\Min Mds\right),
\end{equation}
where $\Zf(0)\leq K$ almost surely.

Next, introduce our asymptotic regime. Take $K^n$ and $M^n$ such that $K^n=nK$ and
$M^n=\frac{\nu}{\nu+\mu}K^n+\sqrt{n}\beta$, where $K,\beta\geq 0$. The following proposition
gives a diffusion limit for the scaled process describing the number of uncharged EVs, i.e.,
$\Zf(\cdot)$.
\begin{proposition}\label{prop:hugear}
Let the scaled process $\hat{Z}^n_f(\cdot):=\frac{\Zfn(\cdot)-\frac{\nu}{\nu+\mu}K^n}{\sqrt{n}}$. If
$\hat{Z}^n_f(0)\overset{d}\rightarrow \hat{Z}_f(0)$, then
$\hat{Z}^n_f(\cdot)\overset{d}\rightarrow \hat{Z}_f(\cdot)$, where the limit satisfies the
following stochastic differential equation
\begin{equation*}
 d\hat{Z}_f(t)= v(\hat{Z}_f(t))dt+
\sqrt{\frac{2 \nu \mu K }{\nu+\mu}} dW(t).
\end{equation*}
Moreover, the drift function is given by
$v(x)=
\begin{cases}
  -(\nu+\mu)x, & \mbox{if } x\leq \beta, \\
  -\nu x-\mu \beta, & \mbox{if } x>\beta,
\end{cases}$
and $W(\cdot)$ is a standard Brownian motion.
\end{proposition}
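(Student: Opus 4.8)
The plan is to follow the martingale-representation strategy of \cite{pang2007martingale} applied directly to the one-dimensional dynamics \eqref{eq:newZ}. First I would center and scale: writing $\hat Z^n_f(t) = (\Zfn(t) - \tfrac{\nu}{\nu+\mu}K^n)/\sqrt n$ and substituting \eqref{eq:newZ}, I replace each Poisson process $N^f_{\cdot}(\int_0^t \cdot\,ds)$ by its compensator plus a martingale, $N^f_r(A(t)) = A(t) + \tilde N^f_r(A(t))$ with $\tilde N^f_r$ a square-integrable martingale w.r.t.\ the natural filtration. This yields a decomposition $\hat Z^n_f(t) = \hat Z^n_f(0) + \int_0^t (\text{drift}^n(s))\,ds + \hat M^n(t)$, where the drift term, after collecting the $\sqrt n$ normalizations and using $M^n = \tfrac{\nu}{\nu+\mu}K^n + \sqrt n\,\beta$, should be $-\nu \hat Z^n_f(s) - \mu\big( (\hat Z^n_f(s)+\tfrac{\nu}{\nu+\mu}K^n/\sqrt n)\wedge M^n/\sqrt n - \tfrac{\nu}{\nu+\mu}K^n/\sqrt n\big)$ — and the point is that the constant parts $\tfrac{\nu}{\nu+\mu}K^n$ in the two drift contributions (birth rate $\nu(K^n - \Zfn)$ and death rate $\mu(\Zfn\wedge M^n)$) cancel exactly by the choice of centering, leaving a piecewise-linear drift that converges pointwise to $v(x) = -(\nu+\mu)x$ on $\{x\le\beta\}$ and $-\nu x - \mu\beta$ on $\{x>\beta\}$.

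Next I would identify the martingale's quadratic variation. The predictable quadratic variation of $\hat M^n$ is $\tfrac1n\big(\int_0^t \nu(K^n - \Zfn(s))\,ds + \int_0^t \mu(\Zfn(s)\wedge M^n)\,ds\big)$; using a preliminary functional law of large numbers (the fluid limit $\Zfn(\cdot)/n \to z_f(\cdot)$ from Proposition~\ref{prop:fluidfull}, whose invariant point is $\tfrac{\nu K}{\nu+\mu}$ in the relevant regime), both integrands are $\approx \nu(K^n - \tfrac{\nu}{\nu+\mu}K^n) = \tfrac{\nu\mu}{\nu+\mu}K^n = \tfrac{\nu\mu}{\nu+\mu}nK$ to leading order, so $\langle \hat M^n\rangle(t) \to \tfrac{2\nu\mu K}{\nu+\mu}\,t$. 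By the martingale FCLT (e.g.\ \cite[Theorem 7.1.4]{ethier1986markov}) $\hat M^n(\cdot) \Rightarrow \sqrt{\tfrac{2\nu\mu K}{\nu+\mu}}\,W(\cdot)$ for a standard Brownian motion $W$. Combining the drift convergence and the martingale convergence via the continuous-mapping / continuity argument for the (Lipschitz) integral map $x \mapsto$ solution of the SDE with drift $v$, together with the continuity and uniqueness of the solution to the limiting SDE (guaranteed since $v$ is globally Lipschitz), gives $\hat Z^n_f(\cdot) \Rightarrow \hat Z_f(\cdot)$ solving the stated SDE.

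For rigor one also needs tightness of $\{\hat Z^n_f(\cdot)\}$, which I would obtain in the standard way: the compensator integrals are continuous with bounded (in probability, on compact time intervals) increments because $\Zfn \le K^n$ a.s., so the Aldous–Rebolledo / Kurtz criteria apply; alternatively one reuses the general tightness machinery of \cite{pang2007martingale}. I expect the main obstacle to be handling the piecewise-linear drift cleanly near the kink $x=\beta$: one must verify that the pre-limit drift functions converge to $v$ \emph{uniformly on compacts} (not merely pointwise) and that the map sending a driving path to the SDE solution is continuous in the uniform topology despite the nondifferentiable drift — this is exactly the point where one invokes Lipschitz continuity of $v$ and a Gronwall estimate, and it is the same technical step that underlies Theorem~\ref{Th:diffusion}, so here I would simply mimic that argument and omit the routine details as the paper does for the other limit theorems.
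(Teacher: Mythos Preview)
Your proposal is correct and follows essentially the same approach as the paper's proof: martingale decomposition of the Poisson processes in \eqref{eq:newZ}, centering by $\tfrac{\nu}{\nu+\mu}K^n$ so the drift collapses to the piecewise-linear $v$, identifying the limiting quadratic variation via the fluid value $\tfrac{\nu K}{\nu+\mu}$ (which follows from the initial-condition hypothesis since $\hat Z^n_f(0)\Rightarrow \hat Z_f(0)$ forces $\Zfn(0)/n\to \tfrac{\nu K}{\nu+\mu}$), and then applying the martingale FCLT together with Lipschitz continuity of $v$. The paper's proof is terser---it combines the two Brownian noises $\sqrt{\nu(K-\tfrac{\nu K}{\nu+\mu})}\,W_1 - \sqrt{\mu\tfrac{\nu K}{\nu+\mu}}\,W_2$ into a single $\sqrt{\tfrac{2\nu\mu K}{\nu+\mu}}\,W$ at the end and does not spell out tightness or the kink argument---but the substance is the same.
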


Next, we give the steady-state distribution of the process $\hat{Z}_f(\cdot)$. This
can be done following \cite[Equation~3]{browne1995}.
Define the following truncated Normal probability density functions
\begin{equation*}
\pf^-(x)= \frac{\phi\left(\frac{x}{\sigma_1}\right)}
{\sigma_1 \Phi\left(\frac{\beta}{\sigma_1}\right)}, \mbox{ for } x\leq \beta,
\mbox{ and }
\pf^+(x)= \frac{\phi\left(\frac{x}{\sigma_2}\right)}
{\sigma_1\left(1- \Phi\left(\frac{\beta-\frac{\mu \beta}{\nu}}{\sigma_2}\right)\right)},
\mbox{ for } x> \beta,
\end{equation*}
 where $\sigma_1^2=\frac{1}{\nu+\mu}\frac{ \nu \mu K }{\nu+\mu}$ and
$\sigma_2^2=\frac{ \mu K }{\nu+\mu}$.
Now, the pdf of $\hat{Z}_f(\cdot)$ is given by
\begin{equation}\label{eq:SSfull}
\pf(x)= d_1 \pf^-(x) \ind{x\leq \beta}+d_2 \pf^+(x) \ind{x> \beta},
\end{equation}
for $x\in \R$. Moreover,  the constants are  $d_1=\frac{1}{1+r}$ and $d_2=1-d_1$ with
$r=\frac{\sigma_1^2}{\sigma_2^2}\frac{\pf^{-}(\beta)}{\pf^{+}(\beta)}$.

Having studied the system when it is always full, we now move to the original stochastic
model.
The first step is to find a relation between the process that gives the number of uncharged EVs and the
process that gives the empty parking spaces.
Recall that the total number of  EVs in the system is given by the following equation
\begin{equation*}
\begin{split}
Q(t)=Q(0)+N_{\lambda} \left( \int_{0}^{t} \ind{Q(s)<K} ds\right)
-N_{\nu,1}\left(\int_{0}^{t}Z(s) ds\right)-N_{\nu,2}\left(\int_{0}^{t}
\left(Q(s)-Z(s)\right) ds\right)
\end{split}
\end{equation*}
and the number of uncharged EVs is
\begin{align*}
Z(t)=Z(0)+N_{\lambda} \left( \int_{0}^{t} \ind{Q(s)<K} ds\right)
-N_{\mu}\left(\int_{0}^{t} Z(s)\Min M ds\right)
-N_{\nu,1}\left(\int_{0}^{t} Z(s) ds\right).
\end{align*}
Define the stochastic process that describes the number of empty parking spaces in the system,
$E(t):=K-Q(t)$, $t\geq 0$.
Using the definition of the process $E(\cdot)$, we have that the system dynamics  can be rewritten as follows
\begin{align}
E(t)&=E(0)+N_{\lambda} \left( \int_{0}^{t} \ind{E(s)>0} ds\right)
-N_{\nu,1}\left(\int_{0}^{t}Z(s) ds\right)
-N_{\nu,2}\left(\int_{0}^{t}
\left(K-E(s)-Z(s)\right) ds\right), \label{eq:emptyspaces}\\
Z(t)&=Z(0)+N_{\lambda} \left( \int_{0}^{t} \ind{E(s)>0} ds\right)
-N_{\mu}\left(\int_{0}^{t} Z(s)\Min M ds\right)
-N_{\nu,1}\left(\int_{0}^{t} Z(s) ds\right) \label{eq:alertZ}.
\end{align}
By \eqref{eq:emptyspaces}, it follows that
\begin{align*}
N_{\lambda} \left( \int_{0}^{t} \ind{E(s)>0} ds\right)=
E(t)-E(0)
+N_{\nu,1}\left(\int_{0}^{t}Z(s) ds\right)
+N_{\nu,2}\left(\int_{0}^{t}
\left(K-E(s)-Z(s)\right) ds\right). \\
\end{align*}
Applying the last equation in \eqref{eq:alertZ}, yields
\begin{align}
Z(t)&=Z(0)+E(0)-E(t) + N_{\nu,2}\left(\int_{0}^{t}
\left(K-E(s)-Z(s)\right) ds\right)
-N_{\mu}\left(\int_{0}^{t} Z(s)\Min M ds\right). \label{eq:keyrelation}
\end{align}
The last relation and an asymptotic bound for the process $E^n(\cdot)$ (see Proposition~\ref{Prop:boundEmpty}) are the core elements we use to prove the main result in this section.
\begin{theorem}\label{prop:fullparking}
Assume that $\lambda^n=\lambda n$, $K^n=  K n $ and $M^n=\frac{\nu}{\nu+\mu}K^n+\beta\sqrt{n}$.
Further, we assume $\nu K< \lambda$. If
$\frac{Z^n(0)-\frac{\nu}{\nu+\mu}K^n}{\sqrt{n}}\overset{d}\rightarrow \hat{Z}_f(0)$, then
\begin{equation*}
\frac{Z^n(\cdot)-\frac{\nu}{\nu+\mu}K^n}{\sqrt{n}}
\overset{d}\rightarrow \hat{Z}_f(\cdot), \mbox{ as }
n\rightarrow \infty,
\end{equation*}
where the process $\hat{Z}_f(\cdot)$ is given in Proposition~\ref{prop:hugear}.
\end{theorem}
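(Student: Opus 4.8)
The plan is to reduce the convergence of the original diffusion-scaled process $\frac{Z^n(\cdot)-\frac{\nu}{\nu+\mu}K^n}{\sqrt{n}}$ to the already-established limit for the ``always-full'' system $\hat Z^n_f(\cdot)$ of Proposition~\ref{prop:hugear}. The bridge is the exact identity \eqref{eq:keyrelation}, which expresses $Z(t)$ in terms of $Z(0)$, $E(0)$, $E(t)$ and the two Poisson terms, and the observation that when $\nu K<\lambda$ the parking lot is ``essentially full'' in the limit: the number of empty spaces $E^n(t)$ is $o(\sqrt n)$. Concretely, I would first invoke Proposition~\ref{Prop:boundEmpty} (the asymptotic bound for $E^n(\cdot)$) to show that $\frac{E^n(\cdot)}{\sqrt n}\overset{d}\rightarrow 0$ uniformly on compact time intervals; intuitively, with $\nu K<\lambda$ the arrival stream overwhelms the departure stream whenever there is a free spot, so $E^n$ behaves like a positive-recurrent queue that is $O(1)$ and therefore negligible after division by $\sqrt n$. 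This also forces $\frac{Q^n(\cdot)}{n}\to K$ and hence $\frac{E^n(0)}{\sqrt n}\to 0$, consistent with the stated initial condition.

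Next I would compare the dynamics of $Z^n(\cdot)$ with those of $\Zfn(\cdot)$. Writing both through their martingale representations — \eqref{eq:keyrelation} for $Z^n$ and \eqref{eq:newZ} for $\Zfn$ — the difference between the two drifts is governed by the discrepancy $E^n(s)$ inside $N_{\nu,2}$ (the term $K-E^n(s)-Z^n(s)$ versus $K-\Zfn(s)$) and by the $E^n(0)-E^n(t)$ boundary terms, both of which we have just shown are $o(\sqrt n)$ after scaling. Centering by $\frac{\nu}{\nu+\mu}K^n$ and dividing by $\sqrt n$, the Poisson processes are replaced by their compensators plus the usual $\sqrt n$-scaled martingales; the FCLT for Poisson processes (as in \cite{pang2007martingale,ethier1986markov}) gives the Brownian terms, the fluid/LLN step identifies the compensators, and the piecewise-linear drift $v(\cdot)$ of Proposition~\ref{prop:hugear} emerges exactly as in the proof of that proposition because $M^n$ is scaled identically here. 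A Gronwall / continuous-mapping argument then yields that $\frac{Z^n(\cdot)-\frac{\nu}{\nu+\mu}K^n}{\sqrt n}$ and $\hat Z^n_f(\cdot)$ have the same weak limit, namely $\hat Z_f(\cdot)$.

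The main obstacle is the rigorous control of the empty-space process $E^n(\cdot)$ and, in particular, showing that the indicator $\ind{E^n(s)>0}$ — which is what couples $E^n$ and $Z^n$ in \eqref{eq:alertZ}–\eqref{eq:emptyspaces} — does not contribute at the diffusion scale. Unlike in the Halfin–Whitt regime, here $E^n$ hovers near $0$ and is not itself a diffusion after $\sqrt n$-scaling (it is $O(1)$), so one cannot simply appeal to a limit theorem for $E^n$; instead one needs a stochastic-domination bound of the type in Proposition~\ref{Prop:boundEmpty}, e.g.\ dominating $E^n$ by an $M/M/1$-type queue with a strictly negative drift (guaranteed by $\nu K<\lambda$) and then using that $\sup_{t\le T}E^n(t)=O_{\mathbb P}(\log n)=o(\sqrt n)$. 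Once this tightness-and-negligibility statement is in hand, the remainder is a routine adaptation of the martingale-FCLT argument already used for Theorem~\ref{Th:diffusion} and Proposition~\ref{prop:hugear}, so I would present that part briefly and concentrate the detailed estimates on the behaviour of $E^n(\cdot)$.
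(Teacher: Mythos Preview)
Your proposal is correct and follows essentially the same approach as the paper: use the identity \eqref{eq:keyrelation} to express $Z^n$ through $E^n$, show via a stochastic-domination argument (Proposition~\ref{Prop:boundEmpty}, coupling $E^n$ with a stable $M/M/1$ queue thanks to $\nu K<\lambda$) that $\sup_{t\le T}E^n(t)=o(\sqrt n)$, and then rerun the martingale computation of Proposition~\ref{prop:hugear}. The paper's bound on $E^n$ is $L(nT)^{1/4}+L\log(nT)$ (obtained via strong approximation of the dominating $M/M/1$ queue by a reflected Brownian motion), which is slightly cruder than the $O_{\mathbb P}(\log n)$ you anticipate, but either estimate is $o(\sqrt n)$ and suffices.
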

A diffusion approximation for the expected number of the original system in an overloaded regime is now given by
\begin{equation}\label{eq:proxyover}
\E{\Zf(\infty)}\approx\sqrt{K}\E{\hat{Z}_{f}(\infty)}+\frac{\nu}{\nu+\mu}K,
\end{equation}
and by using \eqref{eq:SSfull}, we have that
$\E{\hat{Z}_{f}(\infty)}=d_1\int_{-\infty}^{\beta}  \pf^-(x)dx+d_2\int_{\beta}^{\infty}  \pf^+(x)dx$.

The asymptotic regime for an overloaded system leads to a one-dimensional approximation. In the next section, motivated by \cite{ward03}, we consider an asymptotic regime where we scale the parking times, which leads to a two-dimensional diffusion approximation.

\subsubsection{Diffusion approximation for small parking rates}
\label{sec:DifsmallparkingT}
In this section, we study a diffusion approximation in case the parking rate $\nu$ is
``small''. First, we focus on the system with infinitely many parking spaces and we show a heavy
traffic limit theorem; see Section~\ref{sec:alternative} for an
alternative model description when $K=\infty$.
In this case,  the limit is a two-dimensional OU process with reflection.
Then, making an overloaded assumption (for the uncharged EVs),  we derive a two-dimensional OU
limit process and we obtain the same limit if we assume a sufficiently large number of parking spaces.

Assume that $K=\infty$. Define the traffic intensity for this model as $\rho:=\frac{\lambda}{\mu M}$.
Let $\mu, M$ be fixed. Further, define $\nu^n=\frac{1}{n}$ and $\lambda^n=\mu
M(1-\frac{c}{\sqrt{n}})$ for some constant $c$. Note, that $\frac{1-\rho^n}{\sqrt{n}} \rightarrow
c$ as $n \rightarrow \infty$, which is our heavy traffic assumption. Moreover, define the diffusion scaled process as follows
\begin{align*}
\tilde{Z}^n(t):=
\frac{Z^n(nt)}{\sqrt{n}}
\quad  \text{and} \quad
\tilde{Q}^n(t):=
\frac{ Q^n(nt)-\mu M n}{\sqrt{n}}.
\end{align*}
The next proposition states a heavy traffic result for the two-dimensional scaled process.
\begin{proposition}[Heavy traffic]\label{Prop: heavy traffic}
Assume that $(\tilde Z^n(0),\tilde Q^n(0))$ $\overset{d} \rightarrow
(\tilde Z(0),\tilde Q(0))$ as $n \rightarrow \infty$.  We have that
$(\tilde Z^n(\cdot),\tilde Q^n(\cdot)) \overset{d} \rightarrow
(\tilde Z(\cdot),\tilde Q(\cdot))$, and that
the limit satisfies the following two-dimensional stochastic differential equation
\begin{align*}
	\left(
      \begin{array}{cc}
	d\tilde Z(t) \\
	d\tilde Q(t)
	\end{array}\right)
	=
	-\left(
      \begin{array}{cc}
	c\mu M+\tilde Z(t) \\
	c\mu M+\tilde Q(t)
	\end{array}\right)
	dt
+	
\left(
    \begin{array}{cc}
       \sqrt{2\mu M} & 0 \\
      0 &  \sqrt{2\mu M}\\
    \end{array}
  \right)
\left(
      \begin{array}{cccc}
	     dW_{\tilde Z}(t)  \\
		 d W_{\tilde Q}(t)
	\end{array}\right)
+
	\left(
      \begin{array}{cc}
	d\tilde Y(t) \\
	0
	\end{array}\right), \nonumber
\end{align*}
where
$\tilde{Y}(t)$ satisfies the relation $\int_{0}^{\infty} \ind{\tilde{Z}(t)>0}d\tilde{Y}(t)=0$.
Further, $W_{\tilde Z}(\cdot)$ and
$W_{\tilde Q}(\cdot)$ are driftless, univariate Brownian motions such that
$\E{W_{\tilde Z}(t) W_{\tilde Q}(t)}=
 t/2$.
\end{proposition}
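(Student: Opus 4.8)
The plan is to follow the martingale strategy of \cite{pang2007martingale}, in the same spirit as the proof of Theorem~\ref{Th:diffusion}, now under the conventional diffusion scaling of this section.

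\emph{Step 1: martingale representation.} Since $K=\infty$, the indicator $\ind{Q^n(s)<K}$ in \eqref{eq:numberQ}--\eqref{eq:numberU} is identically $1$. Compensating the Poisson processes in \eqref{eq:numberQ}, applying the time change $s\mapsto ns$, and using $n\nu^n=1$ together with $n(\lambda^n-\mu M)=-c\mu M\sqrt{n}$, I obtain
\[
\tilde Q^n(t)=\tilde Q^n(0)-\int_0^t\big(\tilde Q^n(s)+c\mu M\big)\,ds+\tfrac{1}{\sqrt{n}}\,\mathcal M^n_Q(t),
\]
where $\mathcal M^n_Q$ is a square-integrable martingale; no reflection term appears since, by \eqref{eq:mergeP}, $Q^n(\cdot)$ is the queue length of an $M/M/\infty$ system whose fluid limit is $\mu M n$, so the origin is not approached in the limit. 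For $Z^n$ I write $Z^n\Min M=M-(M-Z^n)^+$ in \eqref{eq:numberU}; after the same time change and compensation this gives
\[
\tilde Z^n(t)=\tilde Z^n(0)-\int_0^t\big(\tilde Z^n(s)+c\mu M\big)\,ds+\tfrac{1}{\sqrt{n}}\,\mathcal M^n_Z(t)+\tilde Y^n(t),
\]
with $\tilde Y^n(t):=\sqrt{n}\,\mu\int_0^t(M-Z^n(ns))^+\,ds$ nonnegative, nondecreasing, and increasing only on $\{s:Z^n(ns)<M\}$, so $(\tilde Z^n,\tilde Y^n)$ solves an approximate one-dimensional Skorokhod problem with barrier at level $M/\sqrt{n}$.

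\emph{Step 2: a priori estimates.} Bounding $Z^n(\cdot)$ from above by the queue length of a modified Erlang-A queue with $M$ servers (Proposition~\ref{Prop:erlang A}) and by an $M/G/\infty$ queue, exactly as in the discussion preceding Proposition~\ref{prop:interchange}, yields stochastic boundedness of $\tilde Z^n(\cdot)$ on compact time intervals; consequently $Q^n(ns)/n\to\mu M$ and $C^n(ns)/n\to\mu M$ uniformly on compacts and the Lebesgue measure of $\{s\le t:Z^n(ns)<M\}$ tends to $0$ in probability. Together with the explicit compensators this pins down the limiting predictable quadratic variations
\[
\tfrac1n\langle\mathcal M^n_Z\rangle_t\to2\mu M t,\qquad \tfrac1n\langle\mathcal M^n_Q\rangle_t\to2\mu M t,\qquad \tfrac1n\langle\mathcal M^n_Z,\mathcal M^n_Q\rangle_t\to\mu M t,
\]
the cross term coming only from the shared arrival process $N_{\lambda^n}$ (the shared reneging term being of order $\sqrt{n}$, hence negligible after dividing by $n$). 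Since the jumps of $\mathcal M^n_Z/\sqrt{n}$ and $\mathcal M^n_Q/\sqrt{n}$ are of size $1/\sqrt{n}\to0$, the martingale central limit theorem of \cite{ethier1986markov}, as in the proof of Theorem~\ref{Th:diffusion}, gives $(\mathcal M^n_Z/\sqrt{n},\mathcal M^n_Q/\sqrt{n})\overset{d}\rightarrow(\sqrt{2\mu M}\,W_{\tilde Z},\sqrt{2\mu M}\,W_{\tilde Q})$ with $\E{W_{\tilde Z}(t)W_{\tilde Q}(t)}=t/2$.

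\emph{Step 3: passage to the limit.} The one-dimensional reflection map is Lipschitz in the uniform topology \cite{chen2001fundamentals} and the drift $x\mapsto-(x+c\mu M)$ is Lipschitz, so a Gronwall/continuous-mapping argument applied to the two integral equations of Step~1 gives tightness of $(\tilde Z^n,\tilde Q^n,\tilde Y^n)$ and identifies every weak limit point with a solution of the stated two-dimensional reflected stochastic differential equation; the collapse of the barrier $M/\sqrt{n}$ to $0$ is harmless because $\tilde Y^n$ increases only on $\{Z^n(ns)<M\}$, a set of vanishing measure. Pathwise uniqueness for the limiting equation (Lipschitz drift, a single reflecting boundary; cf.\ \cite{ward03}) then upgrades subsequential convergence to full weak convergence. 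The main obstacle is precisely this boundary step: showing that $\tilde Y^n$ is tight and that its limit increases only when $\tilde Z=0$, so that the limit is genuinely the reflected Ornstein--Uhlenbeck process and not merely a process obeying the free dynamics away from the origin; the remaining ingredients are routine adaptations of the proof of Theorem~\ref{Th:diffusion}.
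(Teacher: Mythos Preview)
Your proof follows the same martingale-plus-continuous-mapping route as the paper: compensate the Poisson processes, identify the drift and the reflection term, apply the martingale central limit theorem of \cite{ethier1986markov}, and conclude via continuity of the one-dimensional reflection map. Your quadratic-variation computations, including the cross term coming only from the shared arrival stream, are correct.

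The one substantive difference is in how the reflection is set up. The paper first reduces to $M=1$ (replacing $\mu$ by $\mu M$ afterwards), so that $Z^n\wedge M=\ind{Z^n>0}$ and the pushing term becomes exactly $\mu\sqrt{n}\int_0^t\ind{\tilde Z^n(s)=0}\,ds$; this is already a bona fide Skorokhod regulator at the origin, and the standard continuity of the reflection map applies verbatim. You instead keep $M$ general, write $Z^n\wedge M=M-(M-Z^n)^+$, and obtain a regulator that pushes whenever $\tilde Z^n(s)<M/\sqrt{n}$. This is perfectly workable, but it is an \emph{approximate} Skorokhod problem with a collapsing barrier, and you correctly flag the extra step needed: showing that any subsequential limit $\tilde Y$ increases only on $\{\tilde Z=0\}$. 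The paper's $M=1$ trick buys exactly this step for free; your route requires the (standard, but not entirely trivial) barrier-collapse argument you allude to. Either way the conclusion is the same.
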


Observe that the limit process in the last proposition depends on the reflection at zero. To
overcome this difficulty, we consider  an overloaded regime for the number of uncharged EVs. In this regime,
the fraction of time that the process $\tilde Z(\cdot)$ spends at state zero is negligible; \cite{ward03}. To this end, let $\lambda, \mu, M$ be fixed with $\lambda>\mu M$ and $\nu^n=1/n$. Modifying slightly the scaled processes, i.e.,
\begin{align*}
\tilde{Z}^n_o(t):=
\frac{ Z^n(nt)-(\lambda-\mu M)n}{\sqrt{n}}
\quad  \text{and} \quad
\tilde{Q}^n_o(t):=
\frac{ Q^n(nt)-\lambda n}{\sqrt{n}},
\end{align*}
we are able to show the following proposition.
\begin{proposition}\label{prop:Difover}
Let $\lambda>\mu M$. Supposing that $(\tilde Z^n_o(0),\tilde Q^n_o(0)) \overset{d} \rightarrow
(\tilde Z_o(0),\tilde Q_o(0))$ as $n \rightarrow \infty$, then
$(\tilde Z^n_o(\cdot),\tilde Q^n_o(\cdot)) \overset{d} \rightarrow
(\tilde Z_o(\cdot),\tilde Q_o(\cdot))$.
The diffusion limit satisfies the following two-dimensional stochastic differential equation
\begin{align*}
	\left(
      \begin{array}{cc}
	d\tilde Z_o(t) \\
	d\tilde Q_o(t)
	\end{array}\right)
	=
	-\left(
      \begin{array}{cc}
	\tilde Z_o(t) \\
	\tilde Q_o(t)
	\end{array}\right)
	dt
+	\left(
      \begin{array}{cccc}
	\sqrt{\lambda}& -\sqrt{\mu M}& -\sqrt{\lambda-\mu M} & 0   \\
		\sqrt{\lambda}& 0& -\sqrt{\lambda-\mu M} & -\sqrt{\mu M}
	\end{array}\right)
 d\blt{W}(t), \nonumber
\end{align*}
where $\blt {W}(\cdot)=(W_1(\cdot),W_2(\cdot),W_3(\cdot),W_4(\cdot))^T$, with $W_i(\cdot)$ independent standard Brownian
motions.
\end{proposition}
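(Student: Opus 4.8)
The plan is to follow the martingale-based strategy of \cite{pang2007martingale} and to proceed along the same lines as in the proof of Theorem~\ref{Th:diffusion}, but with two simplifications: there is no parking-space boundary (since $K=\infty$), so no regulator term appears, and the overloaded assumption $\lambda > \mu M$ will keep the process $Z^n(\cdot)$ away from zero, so the reflection term $\tilde{Y}(\cdot)$ present in Proposition~\ref{Prop: heavy traffic} vanishes in the limit. First I would write the dynamical equations \eqref{eq:numberQ} and \eqref{eq:numberU} in the $K=\infty$ case, replacing the Poisson processes by their compensated (martingale) counterparts plus compensators, and then center and scale according to $\tilde{Z}^n_o$ and $\tilde{Q}^n_o$, using $\nu^n = 1/n$ and the time-change $t \mapsto nt$. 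After substituting $L^n(Z^n) = Z^n \wedge M^n$ and noting that under the overloaded regime the fluid limit of $Z^n(nt)/n$ sits strictly above $M$ (so that $Z^n \wedge M^n = M^n$ with probability tending to one on compact time intervals), the drift terms collapse to the linear expressions $-\tilde{Z}_o$ and $-\tilde{Q}_o$.

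Second, I would establish the fluid (functional law of large numbers) limit: show that $Z^n(nt)/n \to \lambda - \mu M$ and $Q^n(nt)/n \to \lambda$ uniformly on compacts, so that $\mathbf{1}\{Z^n(ns) \le M^n\} \to 0$ in the appropriate sense and the indicator $\mathbf{1}\{Z^n < M^n\}$ in \eqref{eq:numberU} can be dropped asymptotically; this is the step analogous to Proposition~\ref{Pr:fluid limit}. Third, I would identify the martingale terms. There are three independent Poisson martingales: the arrival martingale (rate $\lambda n$ after time scaling, since $K = \infty$ so the indicator is identically one), the charging-completion martingale (rate $\mu (Z^n \wedge M^n) = \mu M^n \approx \mu M n$ asymptotically), and the impatience-departure martingale for uncharged EVs (rate $\nu^n Z^n(ns) = Z^n(ns)/n \approx (\lambda - \mu M)$ after scaling). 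The fourth (charged-EV departure) martingale has rate $\nu^n C^n = C^n/n \to \lambda - (\lambda - \mu M) = \mu M$. By the martingale functional central limit theorem \cite{ethier1986markov}, the scaled martingale vector converges to a Brownian motion whose covariance matrix is read off from the limiting compensators: the diagonal quadratic-variation entries are $\lambda + \mu M + (\lambda - \mu M) = 2\lambda$ for the $Z$-component and $\lambda + (\lambda-\mu M) + \mu M = 2\lambda$ for the $Q$-component (the arrival and uncharged-impatience martingales are shared between the two equations, while the charging martingale enters only the $Z$-equation and the charged-departure martingale only the $Q$-equation), and the off-diagonal entry is $\lambda + (\lambda - \mu M)$. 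One checks that the $4 \times 2$ dispersion matrix in the statement, applied to four independent standard Brownian motions, reproduces exactly this covariance structure: rows $(\sqrt{\lambda},-\sqrt{\mu M},-\sqrt{\lambda-\mu M},0)$ and $(\sqrt{\lambda},0,-\sqrt{\lambda-\mu M},-\sqrt{\mu M})$ have squared norms $2\lambda$ each and inner product $\lambda + (\lambda-\mu M)$, matching.

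Finally, I would invoke the continuous-mapping / continuity argument (the analogue of Proposition~\ref{Th: existence}): the limiting SDE is a linear (Ornstein--Uhlenbeck) system with constant diffusion coefficient and no reflection, hence has a unique strong solution that depends continuously on the driving Brownian path and the initial condition, so weak convergence of the driving data plus convergence of the remainder terms to zero yields $(\tilde Z^n_o, \tilde Q^n_o) \overset{d}\to (\tilde Z_o, \tilde Q_o)$. The main obstacle I anticipate is making rigorous the claim that the fraction of time (or the scaled occupation measure) that $Z^n(nt)$ spends at or below $M^n$ is asymptotically negligible, i.e.\ controlling $\sqrt{n}\int_0^t \mathbf{1}\{Z^n(ns) \le M^n\}\,ds$ and the associated error in the compensator of the charging martingale; this requires an exponential or large-deviations-type bound showing the centered process does not make order-$\sqrt{n}$ excursions downward on $O(1)$ time scales — precisely the point where the overloaded assumption $\lambda > \mu M$ is essential, and where one would borrow the state-space-collapse / excursion estimates from \cite{ward03} (cf.\ the remark preceding the proposition). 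A secondary, more bookkeeping-level difficulty is tracking how the single physical arrival stream and the single physical uncharged-departure stream induce the \emph{correlated} noise shared by both coordinates, so that the limit is genuinely driven by four independent Brownian motions rather than two; careful use of the representation \eqref{eq:numberQ}--\eqref{eq:numberU} with a common arrival Poisson process handles this.
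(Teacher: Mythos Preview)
Your proposal is correct and follows essentially the same martingale-based route as the paper's proof: compensate the Poisson processes, center and scale to obtain the prelimit equations, establish the fluid limit so the threshold term drops out, apply the martingale FCLT, and defer the key estimate $\sqrt{n}\int_0^t \ind{\bar Z^n(s)=0}\,ds\to 0$ to \cite{ward03}. The paper streamlines slightly by first reducing to $M=1$ (replacing $\mu$ by $\mu M$ at the end), whereas you carry $M$ through and explicitly verify that the $2\times 4$ dispersion matrix reproduces the limiting covariance; your extra covariance bookkeeping is a welcome clarification of a point the paper leaves implicit.
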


Note that we derive the same limit if we assume that
$K<\infty$ and we scale the number of parking spaces in the $n^{th}$ system $K^n$ such that
$\frac{K^n-\lambda n}{\sqrt{n}}\rightarrow \infty $,
as $n \rightarrow \infty$.  In this case,
the fraction of time that the scaled process $\tilde Q_o^n(\cdot)$ spends on the boundary is negligible. This is made rigorous in the following lemma.

\begin{lemma}\label{lem:prel}
For $T>0$, we have that for any
$\epsilon>0$ there exists $n_{\epsilon}$  such that
\begin{equation*}
\Prob{ \sup_{t\leq T} \tilde{Q}^n_o(t)< \frac{K^n-\lambda n}{\sqrt{n}}}> 1-\epsilon,
\end{equation*}
for all $n>n_{\epsilon} $.
\end{lemma}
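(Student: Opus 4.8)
\textbf{Proof proposal for Lemma~\ref{lem:prel}.}

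The plan is to show that the scaled total number of EVs cannot reach the (scaled) boundary $\frac{K^n-\lambda n}{\sqrt n}$ on $[0,T]$ with probability tending to one, exploiting that this threshold diverges to $+\infty$ while the scaled process $\tilde Q^n_o(\cdot)$ converges weakly to a well-behaved (tight) limit. Recall from the alternative description in Section~\ref{sec:alternative} that when $K$ is large enough to be irrelevant, $Q^n(\cdot)$ is simply the occupancy of an $M/M/\infty$ queue with arrival rate $\lambda n$ and service rate $\nu^n=1/n$, so $Q^n(nt)$ is stochastically very close to its stationary Poisson law with mean $\lambda n$. First I would work on the loss-free system (i.e.\ with $K^n$ replaced by $+\infty$), whose total-count process $\tilde Q_o^n(t)=\frac{Q^n(nt)-\lambda n}{\sqrt n}$ satisfies, by the same martingale representation used for Proposition~\ref{prop:Difover}, a relation of the form
\begin{equation*}
\tilde Q^n_o(t)=\tilde Q^n_o(0)-\int_0^t \tilde Q^n_o(s)\,ds+\tilde M^n(t),
\end{equation*}
with $\tilde M^n(\cdot)$ a martingale whose predictable quadratic variation is $O(t)$ uniformly in $n$. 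This gives $\mathbb E[\sup_{t\le T}|\tilde Q^n_o(t)|]\le C(T)(1+\mathbb E|\tilde Q^n_o(0)|)$ by Gronwall together with Doob's $L^2$ inequality, hence the family $\{\sup_{t\le T}\tilde Q^n_o(t)\}_n$ is tight (stochastically bounded).

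Given tightness, for any $\epsilon>0$ choose $R_\epsilon$ with $\Prob{\sup_{t\le T}\tilde Q^n_o(t)\ge R_\epsilon}<\epsilon/2$ for all $n$. Since $\frac{K^n-\lambda n}{\sqrt n}\to\infty$ by assumption, there is $n_\epsilon$ such that $\frac{K^n-\lambda n}{\sqrt n}>R_\epsilon$ for $n>n_\epsilon$, and on the event $\{\sup_{t\le T}\tilde Q^n_o(t)<R_\epsilon\}$ the loss-free and the truncated ($K^n$-capacitated) processes coincide pathwise on $[0,T]$, because the capacity constraint is never active. Therefore the capacitated $\tilde Q^n_o(\cdot)$ also satisfies $\sup_{t\le T}\tilde Q^n_o(t)<\frac{K^n-\lambda n}{\sqrt n}$ on this event, which has probability at least $1-\epsilon/2>1-\epsilon$. (The coupling is the obvious one: drive both systems with the same Poisson clocks; as long as the capacity is not hit the two dynamics are literally identical, so no blocking ever occurs in either.)

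The only genuinely delicate point is justifying the a~priori moment bound on $\sup_{t\le T}\tilde Q^n_o(\cdot)$ \emph{uniformly in $n$} before we know the limit exists — but this is standard: one either invokes the $M/M/\infty$ structure directly (the stationary distribution is Poisson$(\lambda n)$, so $\tilde Q^n_o(\infty)$ has uniformly bounded variance, and by monotonicity in the initial condition plus the assumed convergence of $\tilde Q^n_o(0)$ one controls the transient), or one runs the Gronwall/Doob estimate above on the explicit martingale decomposition. I would present the martingale-decomposition route to keep it self-contained with the rest of Section~\ref{sec:DifsmallparkingT}. Everything else — the choice of $R_\epsilon$, the threshold divergence, the pathwise coupling argument — is routine.
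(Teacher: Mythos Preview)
Your argument is correct and follows the same underlying idea as the paper: stochastic boundedness of $\sup_{t\le T}\tilde Q^n_o(t)$ combined with the divergence $\frac{K^n-\lambda n}{\sqrt n}\to\infty$. The paper does not give a formal proof of the lemma but dispatches it in the remark immediately following it: since $\tilde Q^n_o(\cdot)$ converges weakly in the Polish space $(\mathcal D[0,\infty),J_1)$ (Proposition~\ref{prop:Difover}), the sequence is stochastically bounded by \cite[Corollary~3.1]{whitt2007}, and the conclusion then holds for any diverging threshold sequence.

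The difference is one of presentation rather than substance. The paper invokes weak convergence of the uncapacitated process and cites a general tightness result; you instead establish the uniform-in-$n$ bound directly via the martingale decomposition, Doob, and Gronwall, and you make the passage from the uncapacitated to the capacitated system explicit through the pathwise coupling (identical dynamics until the boundary is first hit). Your route is more self-contained and avoids any appearance of circularity between the lemma and Proposition~\ref{prop:Difover}; the paper's route is shorter but leans on the reader to see that the stochastic boundedness used is that of the $K=\infty$ process, which then dominates (or coincides with, up to first hitting) the capacitated one. Both are fine.
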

\begin{remark}
 The sequence $\{\tilde{Q}^n_o(t), t\geq 0\}$
is stochastically bounded as it converges in distribution to $(\mathcal{D}[0,\infty),J_1)$ which
is a complete and separate metric space; \cite[Corollary~3.1]{whitt2007}. Then
Lemma~\ref{lem:prel} follows and holds true for any deviating sequence $R^n$ instead of
$\frac{K^n-\lambda n}{\sqrt{n}}$. Last, note that we only need the weak convergence of the
process $\tilde{Q}^n_o(\cdot)$ and the fact that the quantity $\frac{K^n-\lambda n}{\sqrt{n}}$ goes
to infinity. For the last convergence, it is enough to choose $K^n> \lambda n$.
\end{remark}

The joint steady-state distribution of
$(\tilde Z_o(\cdot),\tilde Q_o(\cdot))$, say $\pi_o(\cdot,\cdot)$, is given by a bivariate Normal distribution with mean
$\blt{\mu}=\left(0, 0\right)$ and covariance matrix
$\blt{\Sigma}=\left(
          \begin{array}{cc}
            \frac{\lambda}{\nu} & \frac{2\lambda-\mu M}{2\nu} \\
            \frac{2\lambda-\mu M}{2\nu} & \frac{\lambda}{\nu} \\
          \end{array}
        \right)
$.
Note, that $\blt{\Sigma}$ is indeed  a variance matrix as it is positive definite. To see
this, observe that
\begin{equation*}
\begin{split}
\det({\blt{\Sigma}})=\frac{1}{4\nu^2}(4\lambda^2-4\lambda^2-\mu^2 M^2
+4\lambda \mu M)>\frac{1}{4\nu^2}(-\mu^2 M^2+4\mu^2 M^2)>0,
\end{split}
\end{equation*}
where the first inequality holds by the assumption $\lambda>\mu M$.

Now, for parameters of the original system such that $\lambda>\mu M$, sufficiently ``small'' $\nu$,
and $K>\lambda/\nu$,
we suggest the following diffusion approximation
\begin{align*}
\E{Z(\infty)}& \approx \frac{\E{\tilde{Z}^n_o(\infty)}}
{\sqrt{\nu}}+
\frac{(\lambda-\mu M)}{\nu},\\
\E{Q(\infty)}&\approx
\frac{\E{\tilde{Q}^n_o(\infty)}}{\sqrt{\nu}}+
\frac{\lambda}{\nu }.
\end{align*}

\section{Numerical evaluation}\label{sec:numerics}
In this section, we
validate numerically the previous bounds and approximations for three cases: the moderately ($\lambda<\nu K$), critically ($\lambda=\nu K$), and
overloaded ($\lambda>\nu K$) systems.
We focus on the expected number of uncharged EVs in the system and the
probability that an EV leaves the charging station with
fully charged battery (success probability). In all the numerical examples, we solve the flow balance equations \eqref{eq:balanceEquations} using standard numerical methods and we let $\nu=$ $\mu=1$. Last, the relative error is calculated by the following formula,
$\mbox{RE}=\frac{|\E{Z(\infty)}-\E{Z^{ap}(\infty)}|}{\E{Z(\infty)}}100\%$, where
$\E{Z(\infty)}$ denotes the expected number of EVs in the original system by solving the
two-dimensional Markov process and $\E{Z^{ap}(\infty)}$ denotes the expected number of uncharged EVs for the
aforementioned approximations.

First, we evaluate the fluid approximation.
Table~\ref{table:FO} gives the relative error between the expected number of uncharged EVs for the original system and the fluid approximation given in \eqref{eq:fluid proxy1} for different values of the number of parking spaces $K$. For a given $K$, we give only the \emph{maximum} relative error for  $0<M\leq K$. As expected, the relative error decreases as $\lambda$ and $K$ increase. In table 2, we present the relative error between the expected number of uncharged EVs for the original system and the modified fluid approximation
given in equation \eqref{eq:fluid proxy}. Not surprisingly, the relative error is much smaller in this case,  as we can see in Table~\ref{table:FM}. For high values of $\lambda, K$ the relative error is approximately 2\% rather than 10--20\%. In addition, the modified fluid approximation seems to be reasonable also in the moderate regime.
\begin{table}[!h]
\begin{center}
\caption{Evaluation of the original fluid approximation}
\label{table:FO}
\begin{tabular}{|c||c|c|c|c|c|}
  \hline
                 & $K=10$ & $K=20$ & $K=30$ & $K=40$ & $K=50$ \\ \hline
  $\lambda=K$    & 39.6569 \% & 28.5579\% & 23.8308\% & 21.2686\% & 19.3942\% \\
  $\lambda=1.2K$ & 27.9191 \% & 18.0935\% & 14.3587\% & 12.0822 \% & 10.4540\% \\
  \hline
\end{tabular}\end{center}
\end{table}
\begin{table}[!h]\label{table:FM}
\begin{center}
\caption{Evaluation of the modified fluid approximation }
\label{table:FM}
\begin{tabular}{|c||c|c|c|c|c|}
  \hline
                 & $K=10$ & $K=20$ & $K=30$ & $K=40$ & $K=50$ \\ \hline
  $\lambda=0.8K$ & 8.8421\% & 7.2831\% & 6.5797\% & 6.1286\% & 5.7892\% \\
  $\lambda=K$    & 11.0904\% & 6.0576\% & 3.5069\% & 2.2082\% & 1.7918\% \\
  $\lambda=1.2K$ & 8.7045\% & 3.7961\% & 3.1936\% & 2.8380\% &  2.5959\% \\
  \hline
\end{tabular}\end{center}
\end{table}

Next, we evaluate the approximation in case of a full parking lot
(see Section~\eqref{sec:full parking lot}) and the diffusion approximation in an overloaded regime given by \eqref{eq:proxyover}. To improve the approximations,
we directly modify them by replacing the parameter $K$ by the expected number of the total EVs in the
original system, i.e., $\lambda(1-B(\lambda/\nu,K))$.
Table~\ref{table:M1BD} gives the relative error for $\E{\Zf(\infty)}$. As we expect, it decreases as $\lambda$ and $K$ increase. Furthermore, this approximation results to small relative errors in all regimes $(<3\%)$. The (prelimit) approximation
 $\E{\Zf(\infty)}$ is better than the modified diffusion approximation in \eqref{eq:proxyover}, as we see in Table~\ref{table:DM1BD}.
\begin{table}[!h]\begin{center}
\caption{Evaluation of the modified $\E{\Zf(\infty)}$}
\label{table:M1BD}
\begin{tabular}{|c||c|c|c|c|c|}
  \hline
                 & $K=10$ & $K=20$ & $K=30$ & $K=40$ & $K=50$ \\ \hline
  $\lambda=0.8K$ & 3.0083\% & 2.2710\% &  1.9940\% & 1.8354\% & 1.7248\% \\
  $\lambda=K$    & 1.9747 \% & 1.2064\% & 0.8632\% & 0.6492\% & 0.5425\% \\
  $\lambda=1.2K$ & 1.2803\% & 0.6708\% & 0.4649\% & 0.3557\% & 0.2873\% \\
  \hline
\end{tabular}\end{center}
\end{table}
\begin{table}[!h]\begin{center}
\caption{Evaluation of the modified diffusion approximation in an overloaded regime }
\label{table:DM1BD}
\begin{tabular}{|c||c|c|c|c|c|}
  \hline
                 & $K=10$ & $K=20$ & $K=30$ & $K=40$ & $K=50$ \\ \hline
  $\lambda=0.8K$ & 12.1493\% & 9.1953\% & 7.8522\% & 7.0228\% & 6.4357\% \\
  $\lambda=K$    & 11.7346\% & 8.1938\% & 6.2103\% & 4.7916\% & 3.7020\% \\
  $\lambda=1.2K$ & 10.7606\% & 6.7321\% & 5.1773\% & 4.5167\% & 4.0661\% \\
  \hline
\end{tabular}\end{center}
\end{table}

In the sequel, we depict the bounds in \eqref{In:bounds}, the modified bound in \eqref{In:lower2} -- where we replace $K$ by $\lambda(1-B(\lambda/\nu,K))$ -- (dotted line), the modified fluid approximation \eqref{eq:fluid} (dashed line), and the modified diffusion approximation in \eqref{eq:proxyover} (dash-dot line). In Figures \ref{fig:1}--\ref{fig:last}, the vertical axes
give the probability that an EV leaves the parking lot with
fully charged battery (success probability) and the horizontal axes give the ratio $M/K$. For each regime, we plot the success probability for $K=10,20,30,50$. In the moderated regime, the lower bound ($K=\infty$) is very close for high values of parking spaces. This is not surprising because the time that the process spends on the boundary is negligible in this case.
 The fluid approximation seems to be quite good in  most of the cases and the diffusion approximation does not improve the fluid one. Last,  note that the modified bound \eqref{In:lower2} does not give a lower bound of the original system. However, it seems to be the best approximation
 for all the cases; even in the moderated regime.
\begin{figure}[!h]
\centering
\begin{minipage}{.5\textwidth}
\includegraphics[scale=0.5]{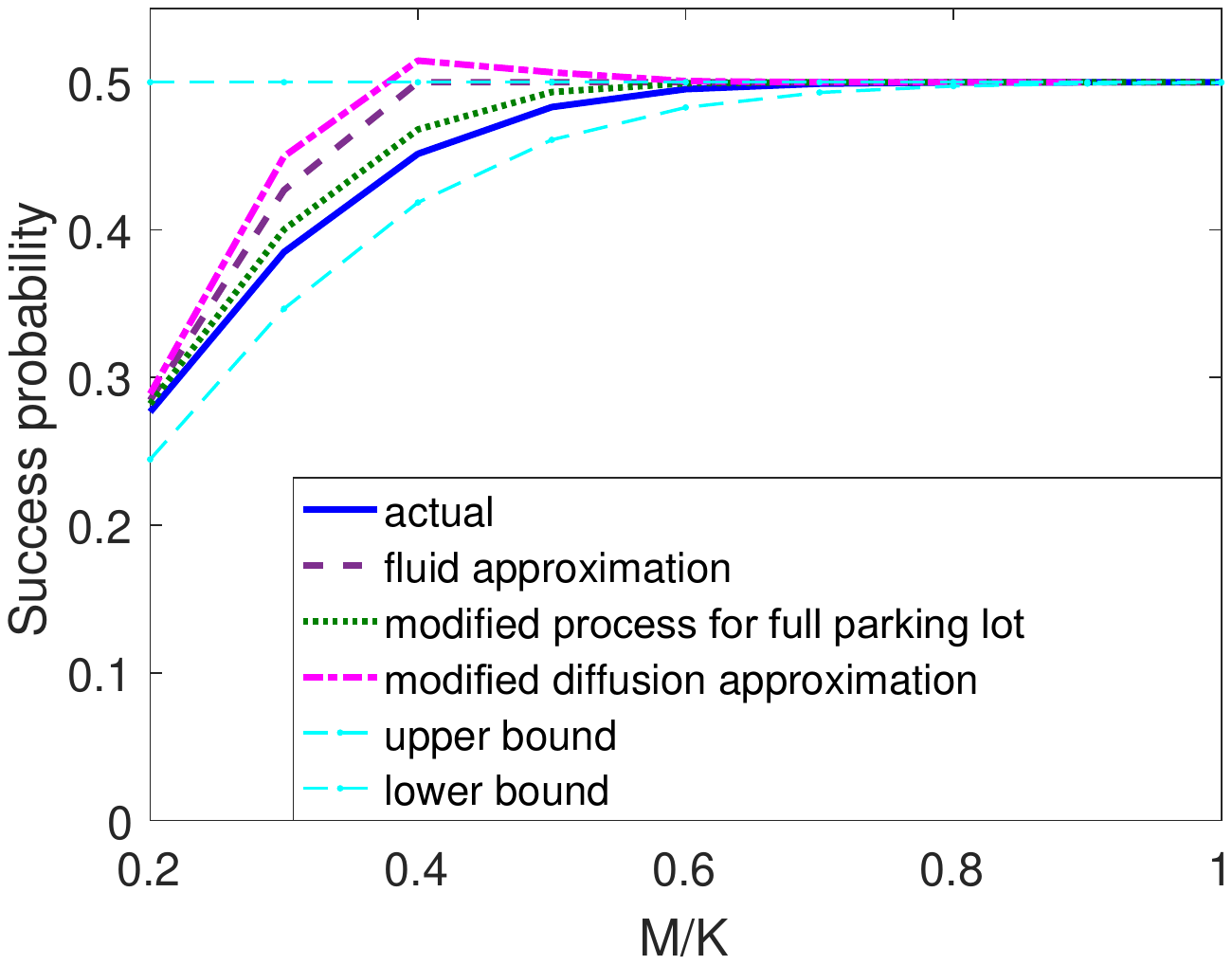}
\caption{$K=10$ and $\lambda=0.8K$.}
\label{fig:1}
\end{minipage}%
\begin{minipage}{.5\textwidth}
  \centering
  \includegraphics[scale=0.5]{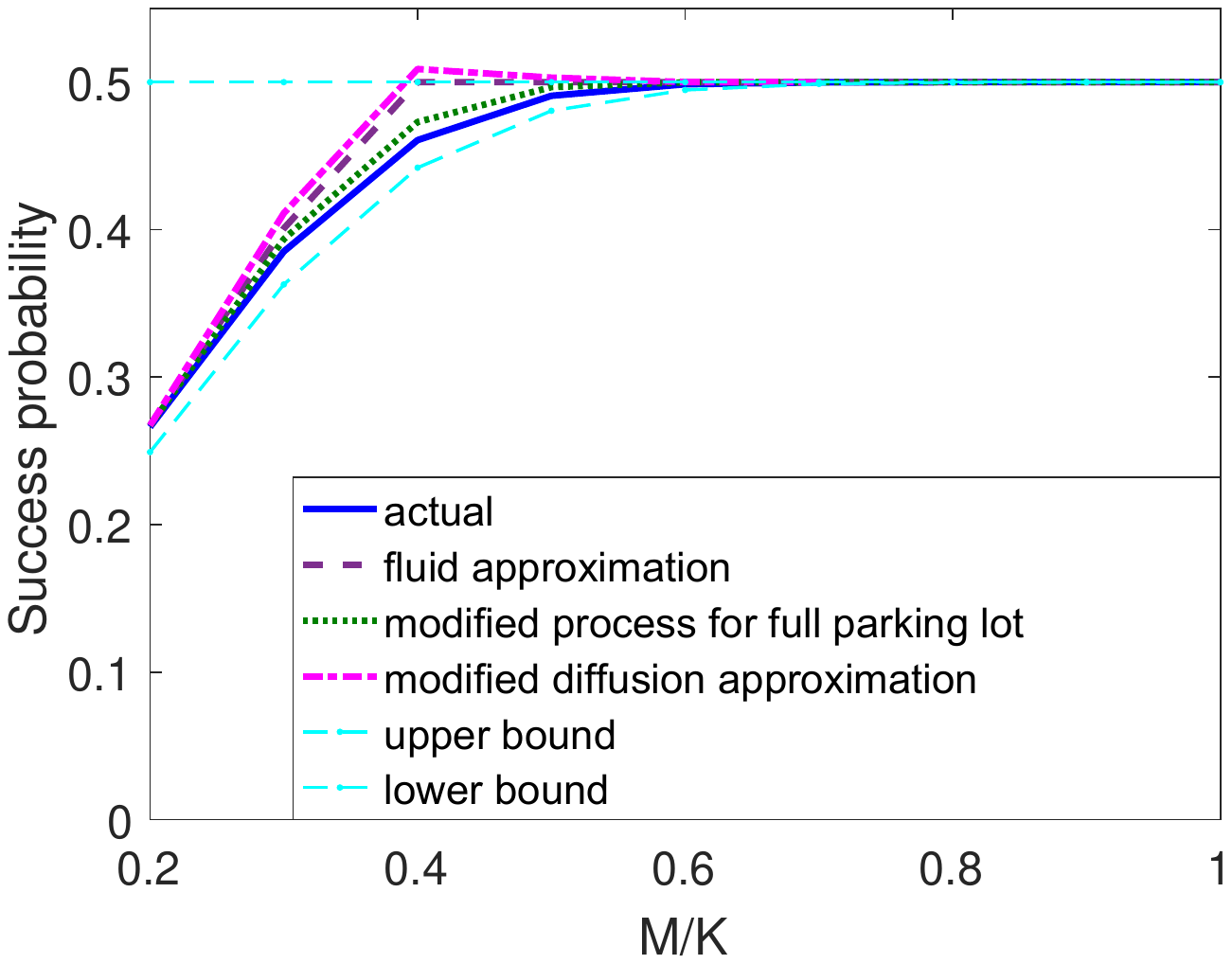}
\caption{$K=20$ and $\lambda=0.8K$.}
\end{minipage}
\end{figure}
\begin{figure}[!h]
\centering
\begin{minipage}{.5\textwidth}
\includegraphics[scale=0.5]{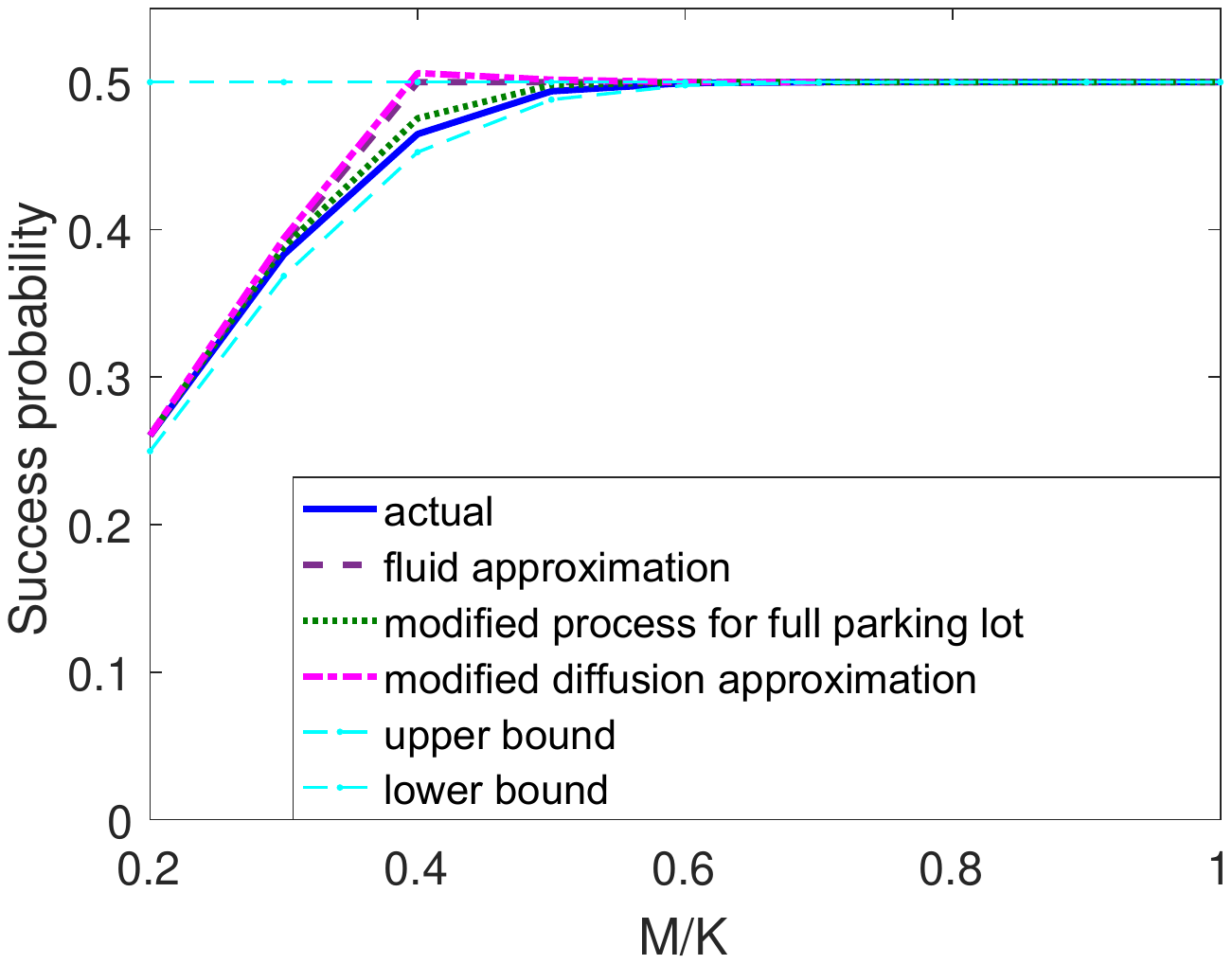}
\caption{$K=30$ and $\lambda=0.8K$.}
\end{minipage}%
\begin{minipage}{.5\textwidth}
  \centering
  \includegraphics[scale=0.5]{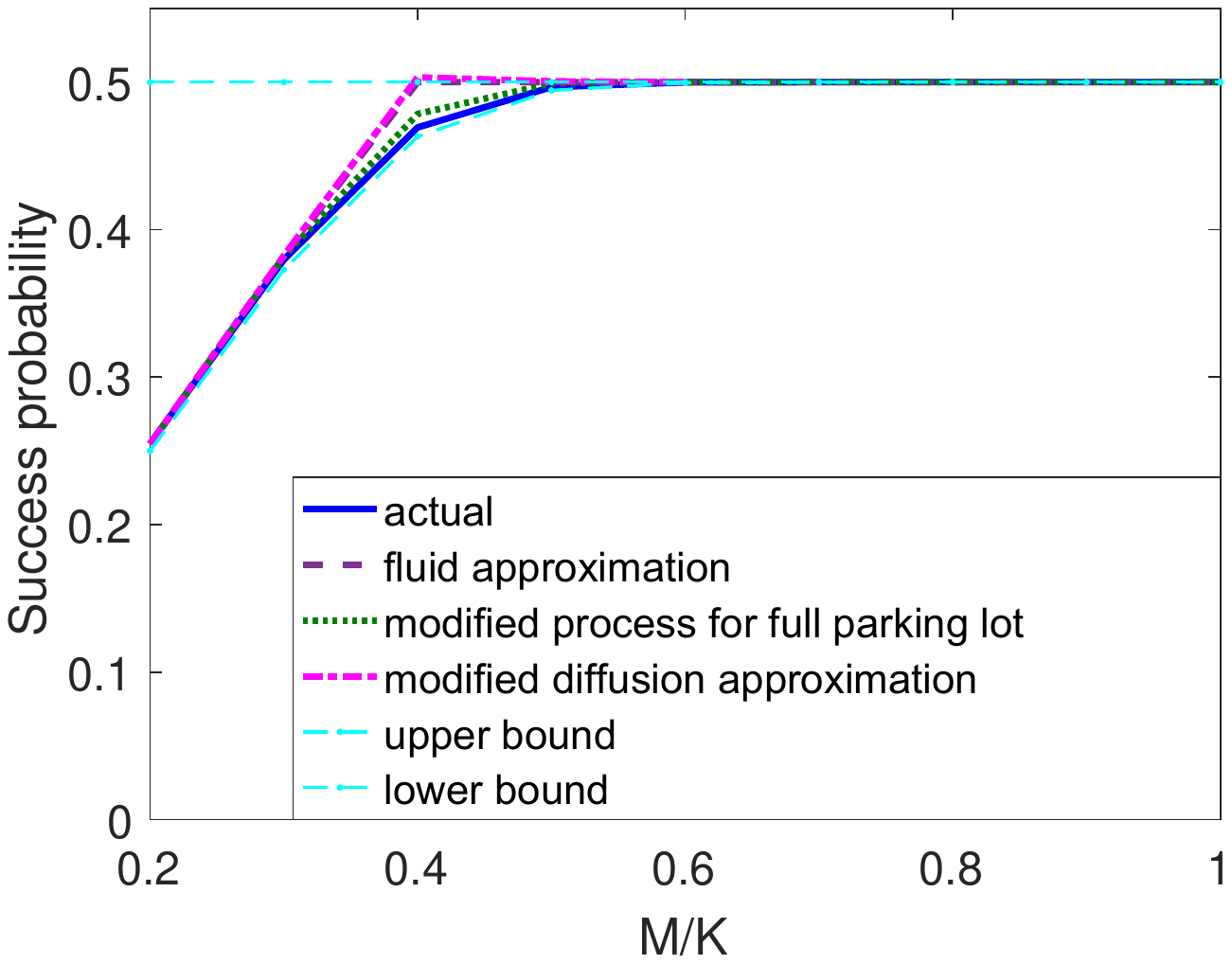}
\caption{$K=50$ and $\lambda=0.8K$.}
\end{minipage}
\end{figure}
\begin{figure}[!h]
\centering
\begin{minipage}{.5\textwidth}
\includegraphics[scale=0.5]{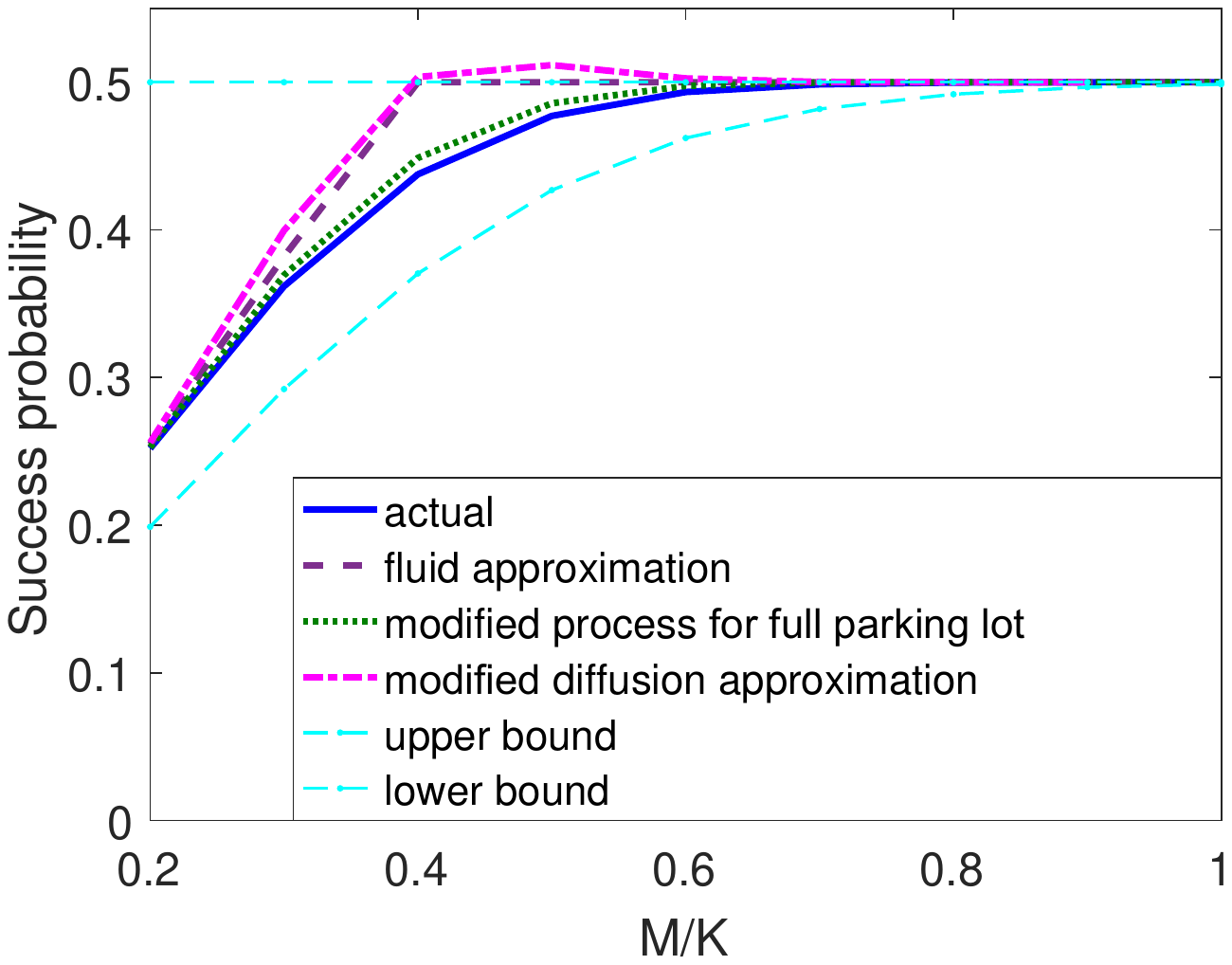}
\caption{$K=10$ and $\lambda=K$.}
\end{minipage}%
\begin{minipage}{.5\textwidth}
  \centering
  \includegraphics[scale=0.5]{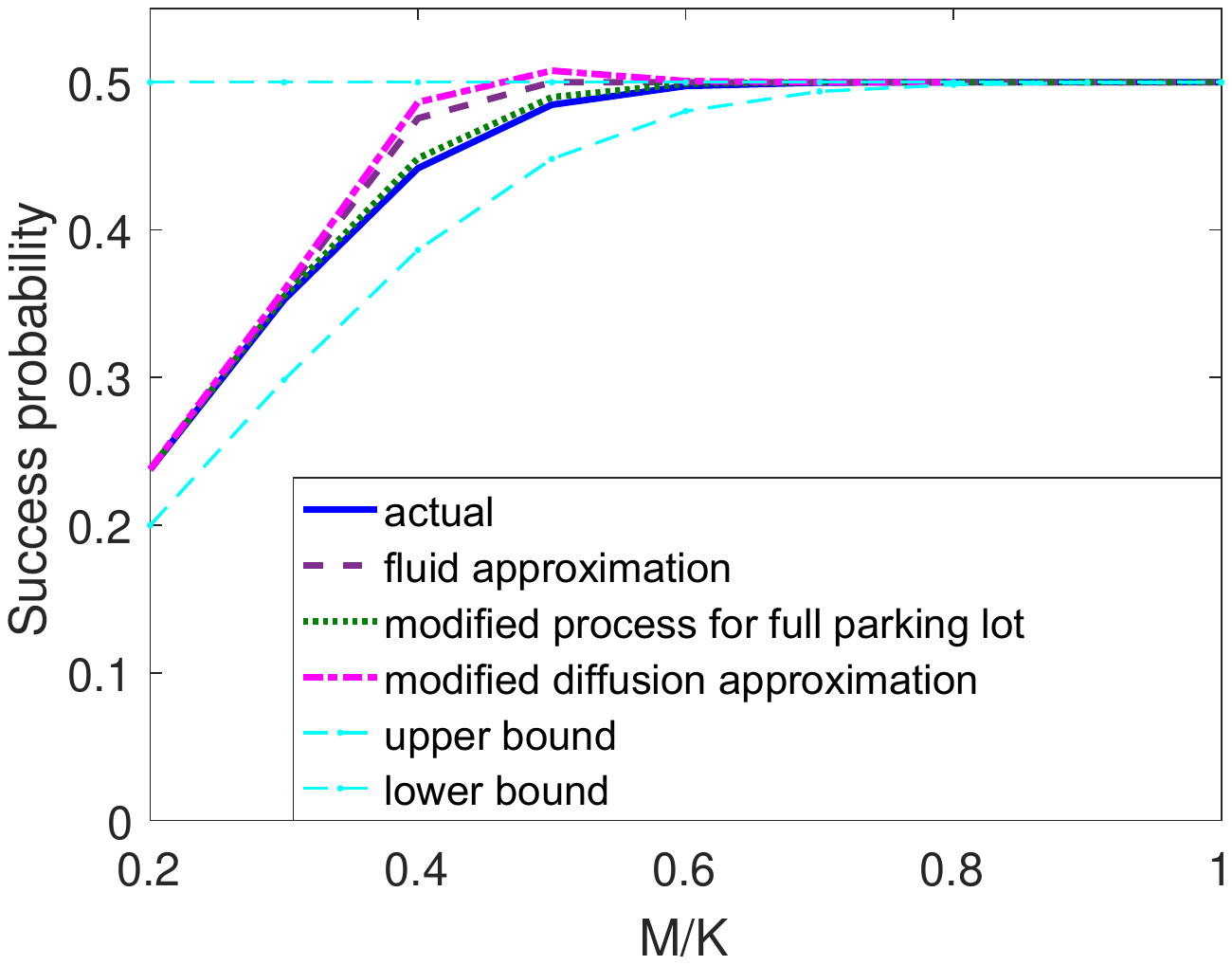}
\caption{$K=20$ and $\lambda=K$.}
\end{minipage}
\end{figure}
\begin{figure}[!h]
\centering
\begin{minipage}{.5\textwidth}
\includegraphics[scale=0.5]{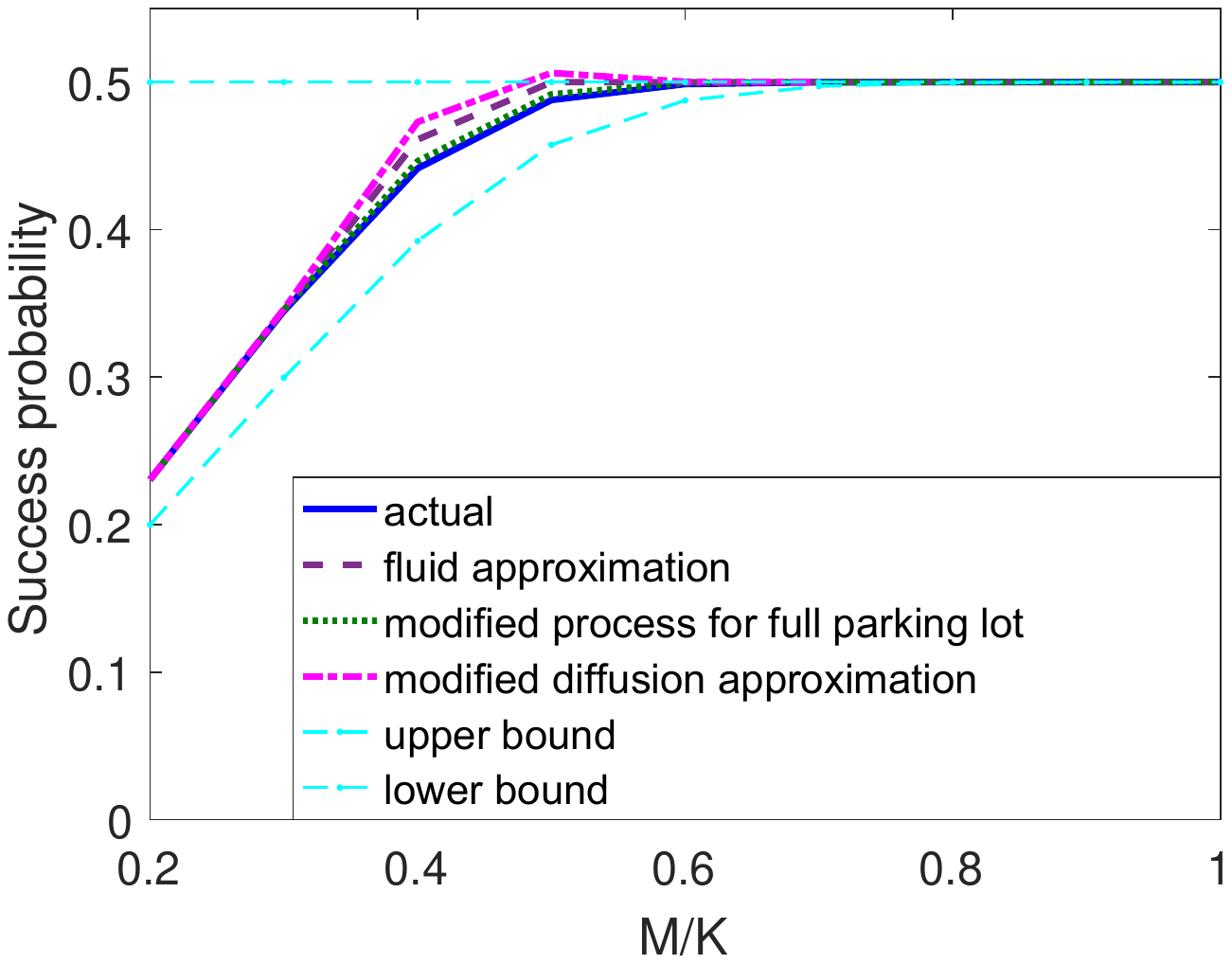}
\caption{$K=30$ and $\lambda=K$.}
\end{minipage}%
\begin{minipage}{.5\textwidth}
  \centering
  \includegraphics[scale=0.5]{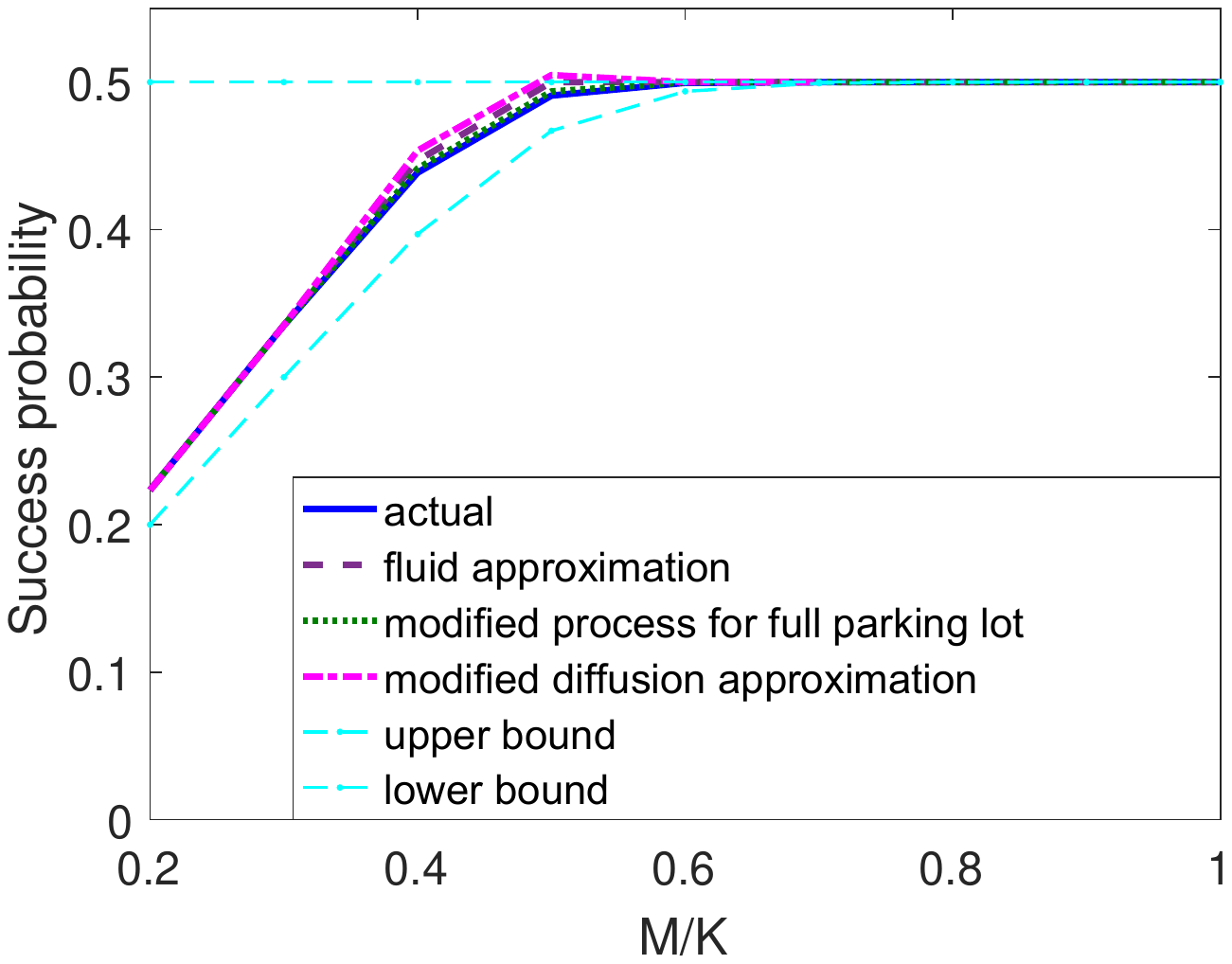}
\caption{$K=50$ and $\lambda=K$.}
\end{minipage}
\end{figure}
\begin{figure}[!h]
\centering
\begin{minipage}{.5\textwidth}
\includegraphics[scale=0.5]{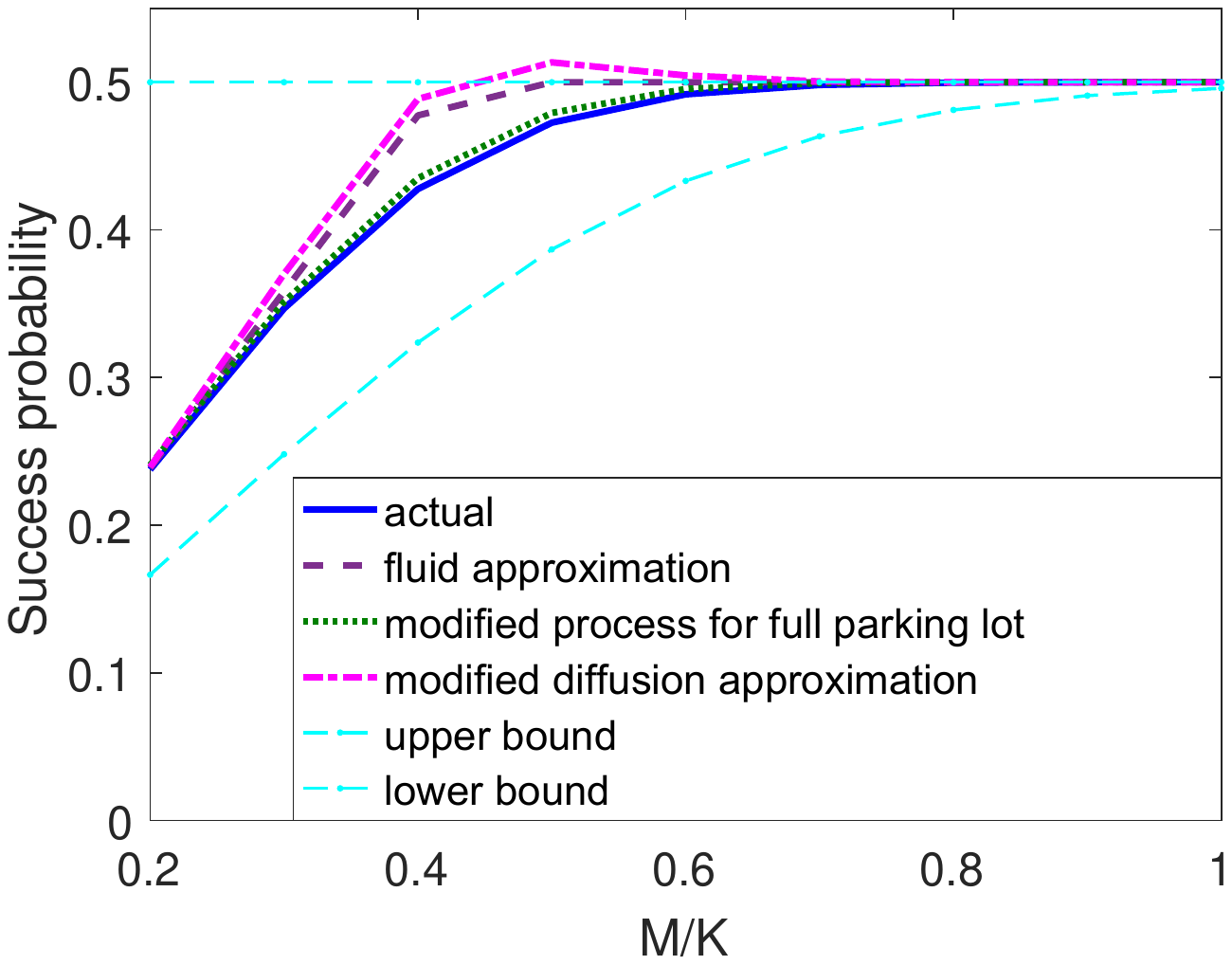}
\caption{$K=10$ and $\lambda=1.2K$.}
\end{minipage}%
\begin{minipage}{.5\textwidth}
  \centering
  \includegraphics[scale=0.5]{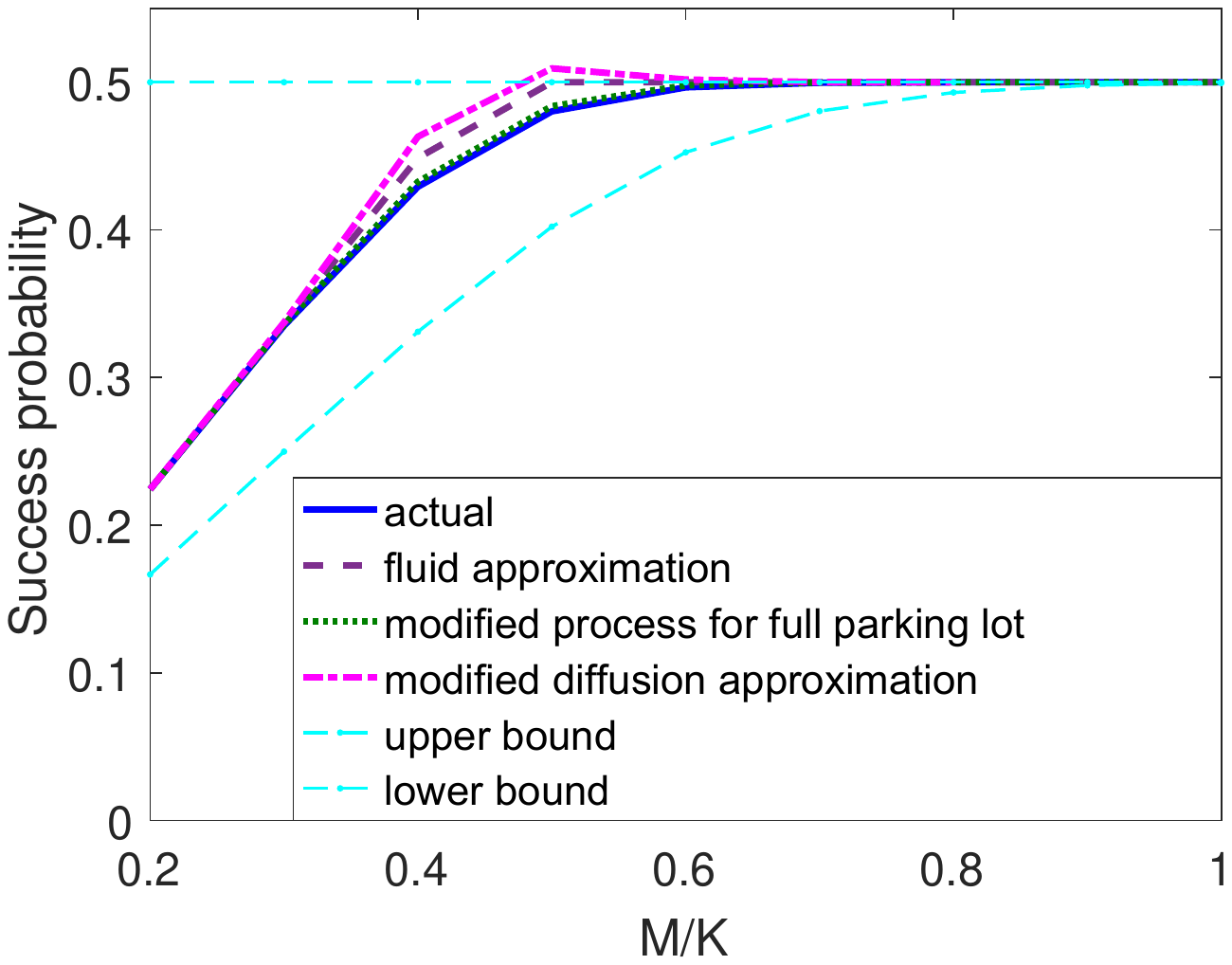}
\caption{$K=20$ and $\lambda=1.2K$.}
\end{minipage}
\end{figure}
\begin{figure}[!h]
\centering
\begin{minipage}{.5\textwidth}
\includegraphics[scale=0.5]{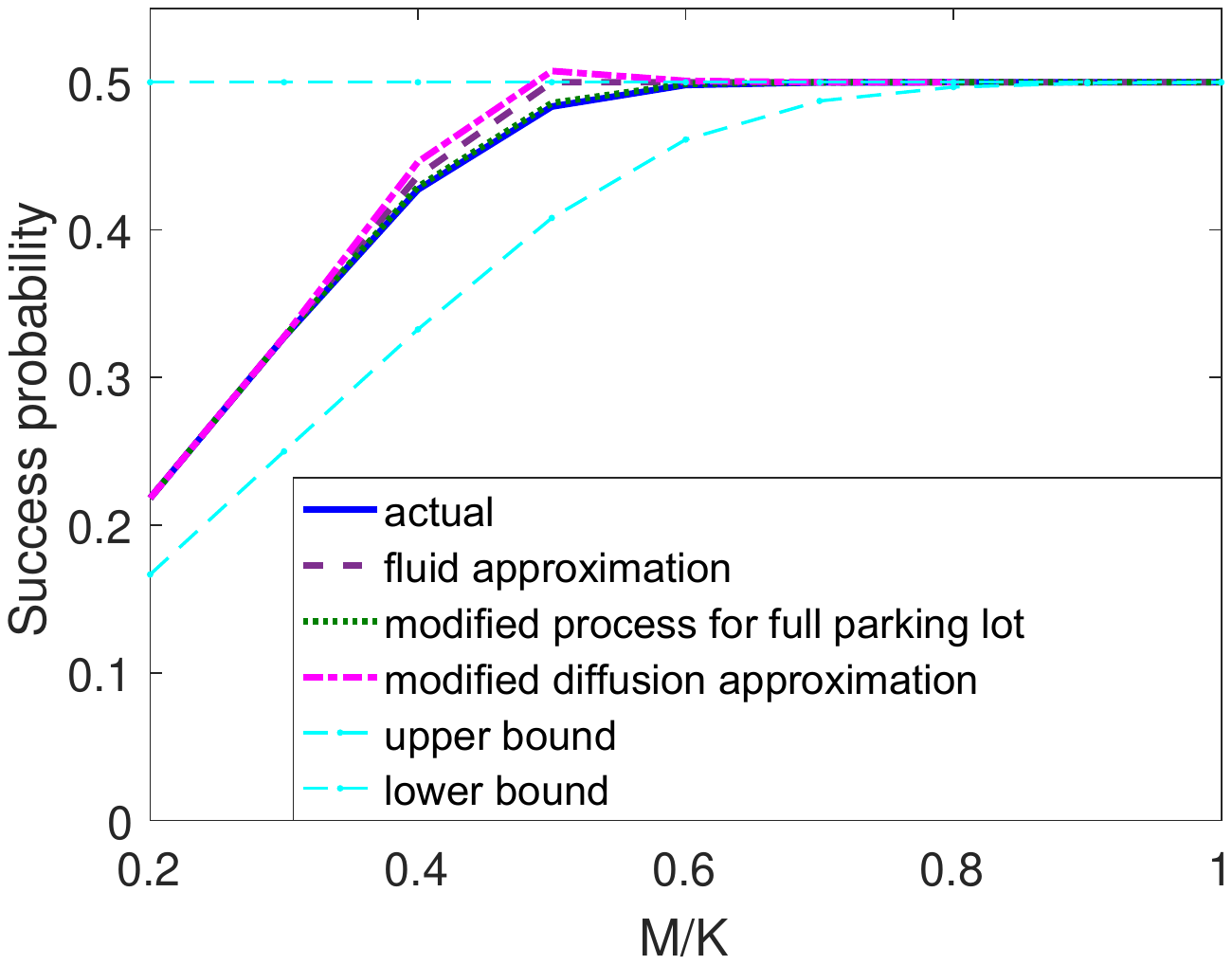}
\caption{$K=30$ and $\lambda=1.2K$.}
\end{minipage}%
\begin{minipage}{.5\textwidth}
  \centering
  \includegraphics[scale=0.5]{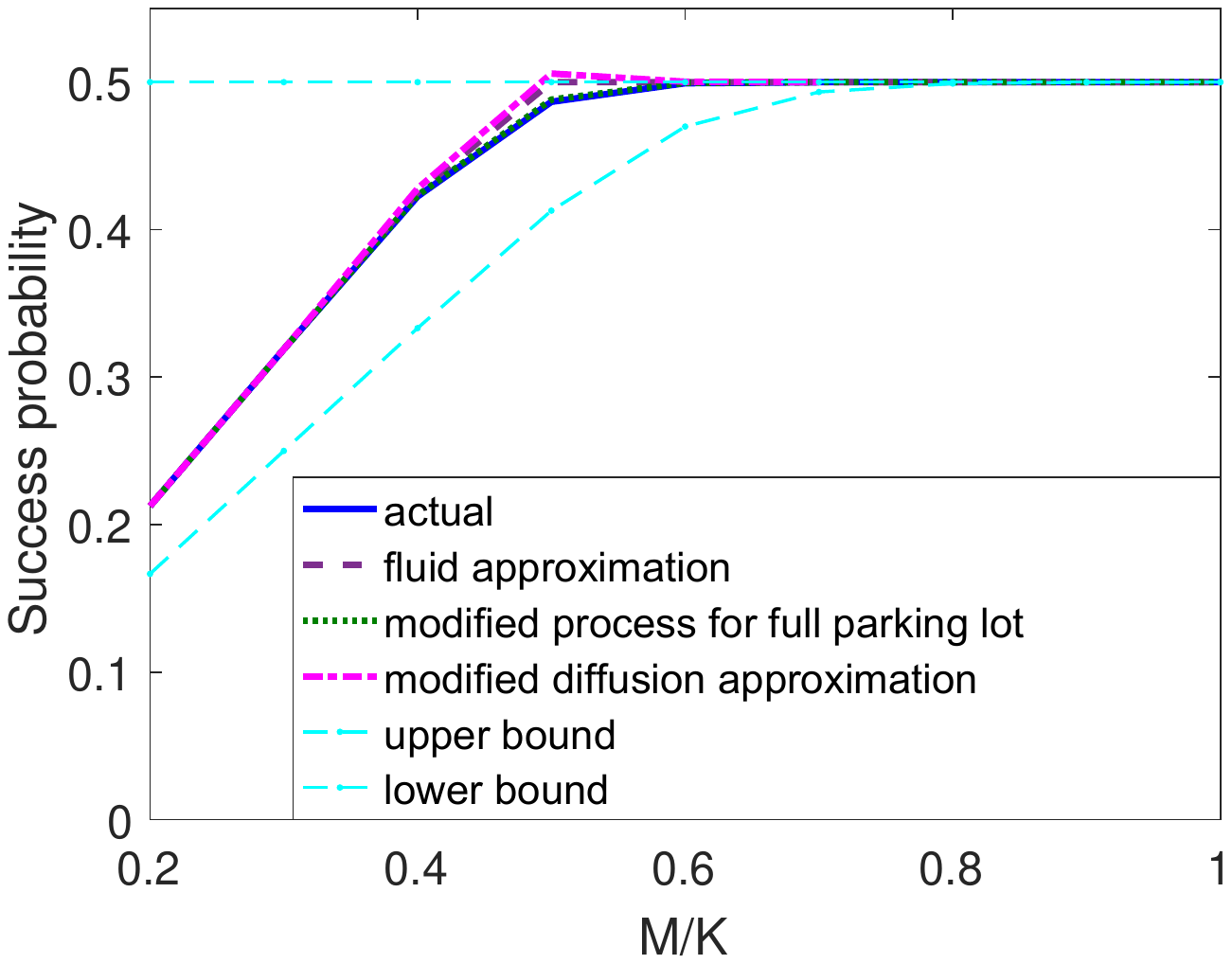}
\caption{$K=50$ and $\lambda=1.2K$.}
\label{fig:last}
\end{minipage}
\end{figure}

\section{Proofs }\label{proofs}
\subsection{Proofs for Section~\ref{sec:explisit results}}
\begin{proof}[Proof of Proposition~\ref{prop:bounds}]
First, we show the upper bound. Note that the probability (in stationarity) that an EV leaves with a full battery can also be given by
\begin{equation*}
P_s=\frac{\E{C^K_M(\infty)}}{\E{Q^K_M(\infty)}}=
\Prob{D > B \max \Big\{ 1, \frac{Z^K_M(\infty)}{M}\Big\}}.
\end{equation*}
Now, using the assumptions that $Z^K_M(\infty)\leq K$ and $M\leq K$, the following inequality holds
 almost surely
\begin{equation*}
B\max \Big\{ 1, \frac{Z^K_M(\infty)}{M}\Big\}\geq
B \max \Big\{ 1, \frac{Z^K_K(\infty)}{K}\Big\} = B.
\end{equation*}
By the last inequality, we have that
\begin{equation*}
\Prob{D > B \max \Big\{ 1, \frac{Z^K_M(\infty)}{M}\Big\}}\leq
\Prob{D > B },
\end{equation*}
which proves the upper bound. Note that the upper bound is nothing else but the minimum of two exponential random variables.

We move now to the proof of the lower bound in \eqref{In:bounds}. We note that it is equivalent to  the following inequality
\begin{equation}\label{in:ELB}
  \Prob{D > B \max \Big\{ 1, \frac{Z^{\infty}_M(\infty)}{M}\Big\}}\leq
\Prob{D > B \max \Big\{ 1, \frac{Z^K_M(\infty)}{M}\Big\}}.
\end{equation}
First, we show that $Z^{\infty}_M(\infty) \geq_{st} Z^{K}_M(\infty)$  by using coupling arguments. Then, \eqref{in:ELB} follows. Fix a sample path $\omega\in \Omega$.  Assume that $Z^{\infty}_M(0) = Z^{K}_M(0)$ and take  identical arrival, charging, and parking times for both systems. Define
$T^*=\inf\{t>0:Q^{K}_M(t)=K \}$. It follows that $Z^{\infty}_M(t) = Z^{K}_M(t)$, for $t\leq T^*$. After time $T^*$, the blocked arrivals in the loss queue will enter in the queue with infinite many parking spaces. That is, $Z^{\infty}_M(t)\geq Z^{K}_M(t) $ for all $t\geq 0$. Removing now the conditioning on the sample path $\omega$, we derive $Z^{\infty}_M(t)\geq_{st} Z^{K}_M(t) $ for all $t\geq 0$, and by the existence of the stationary distribution we have that
$Z^{\infty}_M(\infty)\geq_{st} Z^{K}_M(\infty) $.

It remains to show \eqref{In:lower2}. Let $(Q_{\lambda}(\cdot),Z_{\lambda}(\cdot))$ denote the total number of EVs and the number of uncharged EVs if the arrival rate is $\lambda$. First, using coupling arguments, we prove that
if $\lambda_1\leq \lambda_2$ then
$Q_{\lambda_1}(t) \leq_{st} Q_{\lambda_2}(t)$ and
$Z_{\lambda_1}(t) \leq_{st} Z_{\lambda_2}(t)$ for any $t\geq 0$. Assume the following coupling: if an arrival occurs to the system with arrival rate $\lambda_1$, it also occurs to system with arrival rate $\lambda_2$. Hence, as $\lambda_1\leq \lambda_2$, there are more arrivals in the second system. Further, we assume that all other parameters, i.e. $\mu$, $\nu$, $M$, $K$, are equal in both systems. Assume that both systems start empty and define
$T^{**}=\inf\{t>0:Q_{\lambda_2}(t)=K \}$.
As in the second system there more arrivals, we have that
$Q_{\lambda_1}(t) \leq Q_{\lambda_2}(t)$ and
$Z_{\lambda_1}(t) \leq Z_{\lambda_2}(t)$, for $t\leq T^{**}$. By the Markovian assumptions, we have that the residual charging and parking times are exponential with rare $\mu$ and $\nu$. That is, at any new event after time $T^{**}$, we can resample the charging and parking times and hence the probability of a departure in the system with arrival rate $\lambda_1$ is higher or equal to the probability of a departure in the system with arrival rate $\lambda_2$. In other words, for $t\geq 0$ and for $x>0$,
$\Prob{Q_{\lambda_2}(t) \leq x}\leq \Prob{Q_{\lambda_1}(t) \leq x}$ and
$\Prob{Z_{\lambda_2}(t) \leq x}\leq \Prob{Z_{\lambda_1}(t) \leq x}$. The last relation is equivalent to
$Q_{\lambda_1}(t) \leq_{st} Q_{\lambda_2}(t)$ and
$Z_{\lambda_1}(t) \leq_{st} Z_{\lambda_2}(t)$, for $t\geq 0$.
In the sequel, we see that $Z_{f}(\cdot)$ can be arise as the limit of $Z_{\lambda}(\cdot)$ as $\lambda\rightarrow \infty$, assuming that $Q_{\lambda}(0)\overset{d}\rightarrow K$ and $Z_{\lambda}(0)\overset{d}\rightarrow  Z_{f}(0)$. To see this, first observe that
$Q_{\lambda}(\cdot)\overset{d}\rightarrow K$ as $\lambda \rightarrow \infty$. Now, combining \eqref{eq:numberQ} and \eqref{eq:numberU}, we have that
\begin{align*}
Z_{\lambda}(t)&=Z_{\lambda}(0)-Q_{\lambda}(0)+Q_{\lambda}(t)
+ N_{\nu,2}\left(\int_{0}^{t}
\left(Q_{\lambda}(s)-Z_{\lambda}(s)\right) ds\right)
-N_{\mu}\left(\int_{0}^{t}Z_{\lambda}(s)\Min M ds\right).
\end{align*}
Taking $\lambda\rightarrow \infty$ and using the continuous mapping theorem, we have that
\begin{align*}
Z_{\lambda}(\cdot)\overset{d}\rightarrow Z_{\infty}(\cdot),
\end{align*}
where
\begin{align*}
Z_{\infty}(t)=Z_{f}(0)+ N_{\nu,2}\left(\int_{0}^{t}
\left(K-Z_{\infty}(s)\right) ds\right)
-N_{\mu}\left(\int_{0}^{t}Z_{\infty}(s)\Min M ds\right)\overset{d}=Z_{f}(t)
\end{align*}
and where the last equality follows by \eqref{eq:newZ}.
Furthermore, $Z_{\lambda}(\cdot)$ is non-decreasing. That is,
$Z_{\lambda}(t)\leq_{st} Z_{f}(t)$ for any $t\geq 0$ and by the existence of the stationary distributions we obtain $Z_{\lambda}(\infty)\leq_{st} Z_{f}(\infty)$. By the last inequality, it follows $\E {Z_{\lambda}(\infty)}\leq \E{Z_{f}(\infty)}$ and hence \eqref{In:lower2}.
\end{proof}

\begin{proof}[Proof of Proposition~\ref{Pr: M=K}]
Note that the distribution $p_Q(q)$ corresponds to the stationary distribution of a
one-dimensional Erlang loss system. Furthermore, by \cite[Section~1.3]{kelly2014stochastic} we
know that
\begin{equation*}
p_Q(q)=\frac{1}{q!}\left(\frac{\lambda}{\nu}\right)^qp_Q(0), \mbox{ where }
p_Q(0)= \left(\sum_{i=0}^{K}\frac{1}{i!} \big(\frac{\lambda}{\nu}\big)^i\right)^{-1}.
\end{equation*}
Thus, the probability of an empty system is $\pe(0,0)=p_Q(0)$.

As it is well known that a solution of the balance equations of a Markon process is unique, we
shall show that $\pe(q,z)$, for $z\leq q$ satisfies the flow balance equations
\eqref{eq:balanceEquations}. Then, the proof of the proposition is completed.
First, we note the
relations between $\pe(q+a,z+b)$ and $\pe(q,z)$ for $a,b \in \{-1,0,1\}$. By \eqref{eq:Erlang
loss formula}, we obtain that
\begin{equation}\label{eq:relation in Erlang system}
\begin{split}
\pe(q)= & \frac{1}{q(q-1)!}\frac{\lambda}{\nu}(\frac{\lambda}{\nu})^{q-1}p_Q(0)
 =\frac{1}{q}\frac{\lambda}{\nu}\pe(q-1).
\end{split}
\end{equation}
Now, applying the previous relation in \eqref{eq:stationary dis} we have that
\begin{equation*}
  \begin{split}
      \pe(q-1,z-1)=&
       q \frac{\nu}{\lambda}p_Q(q) \frac{(q-1)!}{(z-1)!(q-z)!}
       \left(\frac{\mu}{\nu+\mu}\right)^{q-z}
       \left(\frac{\nu}{\nu+\mu}\right)^{z-1} \\
        =& z\frac{\nu}{\lambda}\frac{\nu+\mu}{\nu}p_Q(q) \frac{q!}{z!(q-z)!}
       \left(\frac{\mu}{\nu+\mu}\right)^{q-z}
       \left(\frac{\nu}{\nu+\mu}\right)^{z}\\
        =&z\frac{\nu+\mu}{\lambda}\pe(q,z).
  \end{split}
\end{equation*}
Working analogously, we derive the following relations
\begin{align}
  \pe(q-1,z-1) =& z\frac{\nu+\mu}{\lambda}\pe(q,z), \label{eq:relation11}\\
  \pe(q+1,z+1) =& \frac{1}{z+1} \frac{\lambda}{\nu+\mu}\pe(q,z),  \label{eq:relation12}\\
 \pe(q,z+1) =& \frac{q-z}{z+1} \frac{\nu}{\mu}\pe(q,z),  \label{eq:relation13}\\
  \pe(q+1,z) =& \frac{1}{q-z+1} \frac{\lambda}{\nu}\frac{\mu}{\nu+\mu}\pe(q,z).
  \label{eq:relation14} \\ \nonumber
\end{align}
Using the above equations and recalling that $L(z)=z$ when $M=K$, the right-hand side of
\eqref{eq:balanceEquations} for $0<z,q<K$ and $z\neq q$
 can be written as follows
\begin{equation*}
\begin{split}
&\left(\lambda z\frac{\nu+\mu}{\lambda}+
(z+1)\nu \frac{1}{z+1} \frac{\lambda}{\nu+\mu}+
\mu (z+1)\frac{q-z}{z+1} \frac{\nu}{\mu}+
(q-z+1)\nu \frac{1}{q-z+1} \frac{\lambda}{\nu}\frac{\mu}{\nu+\mu}\right) \pe(q,z)\\
&= \left(z(\nu+\mu) + \lambda\frac{\nu}{\nu+\mu} +(q-z)\nu +\lambda \frac{\mu}{\nu+\mu}\right)
\pe(q,z)
=\left(q\nu+\lambda+z \mu\right) \pe(q,z).
\end{split}
\end{equation*}
That is, $\pe(q,z)$ satisfies \eqref{eq:balanceEquations} for $0<z,q<K$ and $z\neq q$.
To show that $\pe(q,z)$ satisfies \eqref{eq:balanceEquations} for $0<q<K$ and $z=0$, we apply
\eqref{eq:stationary dis} and \eqref{eq:relation in Erlang system} in the right-hand side of
\eqref{eq:balanceEquations}.
This leads to
\begin{align*}
 (q+1)\nu \pe(q+1) \left(\frac{\mu}{\nu+\mu}\right)^{q+1}+
\nu \pe(q+1) (q+1)\left(\frac{\mu}{\nu+\mu}\right)^{q}\frac{\nu}{\nu+\mu}+
\mu \pe(q) q \left(\frac{\mu}{\nu+\mu}\right)^{q-1}\frac{\nu}{\nu+\mu}\\
=\left( \lambda \left(\frac{\mu}{\nu+\mu}\right)^{q+1}+
\lambda \left(\frac{\mu}{\nu+\mu}\right)^{q}\frac{\nu}{\nu+\mu}+
\mu q \left(\frac{\mu}{\nu+\mu}\right)^{q-1}\frac{\nu}{\nu+\mu}\right) p_Q(q)\\
 =\left( \lambda+ \mu q \frac{\nu+\mu}{\mu}\frac{\nu}{\nu+\mu}\right)
\left(\frac{\mu}{\nu+\mu}\right)^q p_Q(q)
= (\lambda + q\nu)p_Q(q,0).
\end{align*}
In the same way, we show that the right-hand side of \eqref{eq:balanceEquations} for
$0<z<K$ and $q=K$ becomes
\begin{align*}
\bigg(&\lambda K \frac{\nu}{\lambda} \frac{(K-1)!}{(z-1)!(K-z)!}
\Big(\frac{\mu}{\nu+\mu}\Big)^{K-z}\Big(\frac{\nu}{\nu+\mu}\Big)^{z-1}\\
&\qquad \qquad+\mu (z+1)\frac{K!}{(z+1)!(K-z-1)!}\Big(\frac{\mu}{\nu+\mu}\Big)^{K-z-1}
\Big(\frac{\nu}{\nu+\mu}\Big)^{z+1} \bigg) p_Q(K).
\end{align*}
The last quantity is equal to
\begin{align*}
\big(z \nu \frac{\nu+\mu}{\nu} + (K-z)\mu \frac{\nu}{\nu+\mu}\frac{\nu+\mu}{\mu}\big)
 \frac{K!}{z!(K-z)!}
\Big(\frac{\mu}{\nu+\mu}\Big)^{K-z}\Big(\frac{\nu}{\nu+\mu}\Big)^{z} p_Q(K)
= (K \nu + z \mu) \pe(K,z).
\end{align*}
Using again the relations \eqref{eq:relation11}, \eqref{eq:relation12} and \eqref{eq:relation14}, the
right-hand side of \eqref{eq:balanceEquations} is written for $q<K$ and $q=z$  as follows
\begin{align*}
\left(\lambda q \frac{\nu+\mu}{\lambda}+\nu \frac{\lambda}{\nu+\mu}
+ \nu \frac{\lambda}{\nu}\frac{\mu}{\nu+\mu}\right) \pe(q,q)
=&\left( q (\nu+\mu)+\lambda \frac{\nu}{\nu+\mu}+\lambda \frac{\mu}{\nu+\mu}\right) \pe(q,q)\\
=&(q(\nu+\mu)+\lambda ) \pe(q,q).
\end{align*}
Using again relations \eqref{eq:relation11}$\textup{--}$\eqref{eq:relation14}, it follows
immediately that $\pe(q,z)$ satisfies Equation \eqref{eq:balanceEquations} also for the
remaining cases, i.e., $(q,z)=(0,0)$, $(q,z)=(K,K)$, and last $(q,z)=(K,0)$.
\end{proof}

\begin{proof}[Proof of Proposition~\ref{Prop:erlang A}]
We show that $Z^{\infty}_M(\cdot)$ behaves as modified Erlang-A queue.
Although we adapt
the proof in \cite[Section 6.6.1]{zeltyn2005call}, we briefly describe it here for completeness. First, we write the flow balance
equations for the Markov process $Z^{\infty}_M(\cdot)$ and then we solve them. 	
The balance equations for the Markov process $Z^{\infty}_M(\cdot)$ are given by
\begin{equation}\label{eq:balance2}
\begin{cases}
\lambda \pz(z)+z(\nu +\mu)\pz(z)=\lambda \pz(z-1)+(z+1)(\nu +\mu) \pz(z+1),&
\mbox{if }\  0<z<M,\\
\lambda \pz(z)+(M\mu +z\nu)\pi(z)=\lambda \pz(z-1)+(M \mu +(z+1)\nu) \pz(z+1),&
\mbox{if }\ z\geq M,
\end{cases}
\end{equation}
and for $z=0$ we have that
\begin{equation*}
\lambda \pz(0)=(\nu +\mu) \pz(1).
\end{equation*}
Using the last equation and \eqref{eq:balance2}, we derive inductively the following relations
\begin{equation*}
\lambda \pz(z-1)=z(\nu +\mu)\pz(z),\ \text{if}\ z<M
\end{equation*}
and
\begin{equation*}
(M\mu +z\nu)\pz(z)=\lambda \pz(z-1) ,\ \text{if}\ z \geq M .
\end{equation*}
The balance equations now can be simplified as follows
\begin{equation}\label{eq:alternative_balance}
\begin{cases}
\lambda \pz(z)=(z+1)(\nu +\mu) \pz(z+1),& \mbox{if }  z<M,\\
\lambda \pz(z)=(M \mu +(z+1)\nu) \pz(z+1),& \mbox{if }  z\geq M.
\end{cases}
\end{equation}
Observe that we can directly solve the system \eqref{eq:alternative_balance}. For $z<M$, it is
easy to see that
\begin{equation}\label{eq:solution a}
\pz(z) = \frac{1}{z!}\left(\frac{\lambda}{\nu +\mu}\right)^z \pz(0).
\end{equation}
We show that for $z=M$, the solution of \eqref{eq:alternative_balance} is also given by the last
formula.
By the first equation of \eqref{eq:alternative_balance} for $z=M-1$ and \eqref{eq:solution a},
we obtain the following equation
\begin{equation*}
\pz(M) =
\frac{1}{M}\left(\frac{\lambda}{\nu +\mu}\right)^M \pz(M-1)=
\frac{1}{M!}\left(\frac{\lambda}{\nu +\mu}\right)^M \pz(0).
\end{equation*}
It remains to find the solution in case $z>M$. We do so by induction. Note that by the second
equation of \eqref{eq:alternative_balance} for $z=M$, we have that
\begin{equation*}
\pz(M+1) =
\frac{\lambda}{M\mu +(M+1)\nu}\pz(M)=
\frac{\lambda}{M\mu +(M+1)\nu}
\frac{1}{M!}\left(\frac{\lambda}{\nu +\mu}\right)^M \pz(0).
\end{equation*}
Finally, it is easy to verify that the solution of \eqref{eq:alternative_balance} for $z>M$ is
given by
\begin{equation*}
\pz(z) =
\frac{1}{M!}\left(\frac{\lambda}{\nu +\mu}\right)^M
\prod_{k=M+1}^{z} \frac{\lambda}{M\mu +k\nu}
\pz(0).
\end{equation*}
The probability of an empty system (there are not uncharged vehicles in the parking lot) can be found by the
normalization condition and it is given by \eqref{eq:empty system 2}. Last, we show that the
infinite summation in \eqref{eq:empty system 2} converges. To this end, note that
$L(z)\mu+z\nu\geq z \min\{\nu,\mu\}$. Applying the last observation in \eqref{eq:empty system 2},
we have that
\begin{equation*}
\begin{split}
\sum_{j=0}^{M} \frac{1}{j!}\Big(\frac{\lambda}{\nu +\mu}\Big)^j +
\sum_{j=M+1}^{\infty} \frac{1}{M!}
\Big(\frac{\lambda}{\nu +\mu}\Big)^M\prod_{k=M+1}^{j}\frac{\lambda}{M\mu +k\nu}
\leq & \sum_{j=0}^{\infty} \frac{1}{j!} \Big(\frac{\lambda}{\min\{\nu,\mu\}}\Big)^j\\
=& \exp\{\frac{\lambda}{\min\{\nu,\mu\}} \}.
\end{split}
\end{equation*}
\end{proof}

\begin{proof}[Proof of Proposition~\ref{Prop:overloaded}]
First, we write the balance equations for the one-dimensional birth-death process $\{Z_f(t), t \geq 0\}$. These are given by
\begin{equation*}
\begin{cases}
  \left( \nu(K-z)+\mu z \right)\pf(z) =
  ( \nu(K-z+1)\pf(z-1))+\mu (z+1) \pf(z+1), &\mbox{ if } 0<z<M,\\
\left( \nu(K-z)+\mu M \right)\pf(z)=
 ( \nu(K-z+1)\pf(z-1))+\mu  M \pf(z+1), &\mbox{ if } M\leq z<K,
\end{cases}
\end{equation*}
and on the boundary the following equations hold
\begin{equation*}
  \nu K \pf(0) = \mu \pf(1)  \mbox{ and }
  \mu M \pf(K)=\nu \pf(K-1).
\end{equation*}
Note that the balance equations can be simplified to
\begin{equation}\label{eq:balancefull}
\begin{cases}
   \nu(K-z) \pf(z) = \mu (z+1) \pf(z+1), &\mbox{ if } 0\leq z<M,\\
\mu M \pf(z)=\nu(K-z+1)\pf(z-1), &\mbox{ if } M\leq z\leq K.
\end{cases}
\end{equation}
Applying $z=M-1$ in the first equation of \eqref{eq:balancefull}, we obtain
\begin{equation*}
\pf(M-1)=\frac{\mu M}{\nu (K-M+1)}\pz(M),
\end{equation*}
and recursively we have that
\begin{equation*}
\pf(M-i)=\left(\frac{\mu}{\nu}\right)^i
\frac{\prod_{j=0}^{i-1} (M-j)}{\prod_{j=1}^{i} (K-M+j)}
 \pf(M), \mbox{ if } 0<i \leq M.
\end{equation*}
Change the variable $z=M-i $ in the last equation yields
\begin{equation*}
\pf(z)=\left(\frac{\mu}{\nu}\right)^{M-z}
\frac{\prod_{j=0}^{M-z-1} (M-j)}{\prod_{j=1}^{M-z} (K-M-j)}
 \pf(M),  \mbox{ if } 0 \leq z< M.
\end{equation*}
Working analogously, by the second equation of \eqref{eq:balancefull} we derive
\begin{equation*}
\pf(M+i)=\frac{1}{M^i}
\left(\frac{\mu}{\nu}\right)^i
\prod_{j=0}^{i-1} (K-M-j)\pf(M) , \mbox{ if } 0\leq i \leq K-M,
\end{equation*}
which leads to
\begin{equation*}
\pf(z)=\frac{1}{M^{z-M}}
\left(\frac{\nu}{\mu}\right)^{z-M}\
 \prod_{j=0}^{z-M-1} (K-M-j) \pf(M),
  \mbox{ if } M\leq z \leq K.
\end{equation*}
Last, $\pf(M)$ is determined by the normalization equation $\sum_{z=0}^{K}\pf(z)=1$.
\end{proof}

\subsection{Proofs for Section~\ref{sec:Fluid approximation}}\label{sec:Fluid ap}
\begin{proof}[Proof of Proposition~\ref{prop:fluid limit}]
In this proof, we use martingales arguments. Define the  following filtration
$\mathcal{F}^n_t:=\sigma \left(Z^n(0), Q^n(0), Z^n(s), Q^n(s):0\leq s\leq t \right)$ augmented by including all the null sets for $t\geq 0$ and $n\geq 1$.
Applying the fluid scaling to the dynamical equation \eqref{eq:numberU}, we have that
\begin{align*}
Z^n(t)=Z^n(0)+N_{\lambda} \left(n \int_{0}^{t} \ind{\frac{Q^n(s)}{n}<K} ds\right)
-N_{\mu}\left(n\int_{0}^{t} \frac{Z^n(s)}{n}\Min M ds\right)
-N_{\nu,1}\left(n\int_{0}^{t} \frac{Z^n(s)}{n} ds\right).
\end{align*}
Defining the operator
$\bar{\mathcal{M}}_r^n=\frac{1}{n}\left(N_r(\cdot)-r \cdot\right)$ and
following \cite{pang2007martingale},
we can write
\begin{align*}
\frac{Z^n(t)}{n}=\frac{Z^n(0)}{n}+
\bar{\mathcal{M}}^n_{\lambda}
 \left(n t\right)
-\bar{\mathcal{M}}^n_{\mu}\left(n\int_{0}^{t} \frac{Z^n(s)}{n}\Min M ds\right)
-\bar{\mathcal{M}}^n_{\nu,1}\left(n\int_{0}^{t} \frac{Z^n(s)}{n} ds\right)\\
-\frac{1}{n}\int_{0}^{t} \ind{\frac{Q^n(s)}{n}=K} dN(\lambda n s)
+\lambda t
-\mu \int_{0}^{t} \frac{Z^n(s)}{n}\Min M ds
-\nu\int_{0}^{t} \frac{Z^n(s)}{n} ds.
\end{align*}
The term $\frac{1}{n}\int_{0}^{t} \ind{\frac{Q^n(s)}{n}=K} dN(\lambda n s)$ denotes the number of EVs that are lost due finding the system full under the fluid scaling.  By the discussion in  \cite[Section~6.7]{robert2013}, \cite[Equation~3.44]{kang2015}, and by the assumption $\frac{Q^n(0)}{n} \overset{d} \rightarrow K$,
 it turns out that
\begin{equation*}
\frac{1}{n}\int_{0}^{t} \ind{\frac{Q^n(s)}{n}=K} dN(\lambda n s)
\overset{d}\rightarrow \max\{\lambda-\nu K,0\} t.
\end{equation*}
 Further, observing that $\bar{\mathcal{M}}^n_{r}(\cdot)$ are zero mean martingales with respect to the filtration $\mathcal{F}_t^n$ for any $n\in \mathbb{N}$ (cf.\ \cite{pang2007martingale}) and taking $n \rightarrow \infty$, we derive that
$\frac{Z^n(\cdot)}{n} \overset{d} \rightarrow z(\cdot)$. Moreover, the limit function is characterized by the following functional equation
\begin{equation*}
z(t)=z(0)+\lambda t -\max\{\lambda-\nu K,0\} t
-\mu \int_{0}^{t} z(s)\Min M ds -\nu \int_{0}^{t}z(s) ds,
\end{equation*}
which is equivalent to \eqref{eq:fluid limit}.
\end{proof}

\begin{proof}[Proof of Proposition~\ref{prop:fluid approximation}]
It is not hard to solve the ODE \eqref{eq:fluid limit} explicitly in the two regions, namely
\begin{equation}\label{eq:solutionODE}
z(t)=
\begin{cases}
  \frac{\lambda\Min \nu K} {\nu+\mu}+
  \left(z(0)-\frac{\lambda\Min \nu K}{\nu+\mu}\right) e^{-(\nu+\mu)t}   ,
  & \mbox{ if }  z(t)\leq M,\\
  \frac{\lambda\Min \nu K-\mu M}{\nu}+
  \left(z(0)-\frac{\lambda\Min \nu K-\mu M}{\nu}\right) e^{-\nu t} ,
   & \mbox{ otherwise}.
\end{cases}
\end{equation}
Define $z_1:=\frac{\lambda\Min \nu K} {\nu+\mu}$ and
$z_2:=\frac{\lambda\Min \nu K-\mu M}{\nu}$. First, note that for a given initial state $z(0)\in[0,K]$, we have that $z(t)\leq K$ for any $t\geq 0$. To see this, observe that if $z(t)\leq M$, then by the model assumptions $z(t)\leq M \leq K$. On the other hand, if $z(t)> M$, we have that
\begin{equation*}
z(t)=z_2+ \left(z(0)-z_2\right) e^{-\nu t}
\leq z_2 \leq K,
\end{equation*}
if $z(0)-z_2\leq 0$ and
\begin{equation*}
z(t)=z_2+ \left(z(0)-z_2\right) e^{-\nu t}
\leq z_2 +z(0)-z_2=z(0)\leq K,
\end{equation*}
if $z(0)-z_2\geq  0$. So, for any $t\geq 0$ it follows $z(t)\leq K$ and hence Definition~\ref{def:fluid model} is well defined.

In the sequel, we show that $z(t)$ converges as $t$ goes to infinity  to the following point
\begin{equation}\label{eq:solutionfix}
z=
\begin{cases}
  z_1,
  & \mbox{ if }  z_1\leq M,\\
  z_2 , & \mbox{ otherwise}.
\end{cases}
\end{equation}
Then, we show that $z$ is unique and using the Markovian assumptions, we see that it is equivalent to \eqref{eq:fluid proxy1} and hence $z=z^*$. Also, observe that $z^*$ is an invariant point. Indeed, if we set $z(0)=z^*$, then by \eqref{eq:solutionODE} we have that $z(t)=z^*$ for $t\geq 0$.

First, assume that $z_1\leq M$. If $z(0)\leq M$, then
$z(t)\leq M$ for any $t\geq 0$. To see this, note that if
$z(0)-z_1\leq 0$, then we have that
\begin{equation*}
z_1+
  \left(z(0)-z_1\right) e^{-(\nu+\mu)t}
  \leq z_1 \leq M.
\end{equation*}
On the other hand, if the quantity
$z(0)-z_1$ is positive, we show that there does not  exist $t^*>0$ such that $z(t^*)>M$. Suppose that there exists $t^*$ such that $z(t^*)>M$, we have that
\begin{align*}
 z_1+\left(z(0)-z_1\right) e^{-(\nu+\mu)t^*}>M,
\end{align*}
by assumption $z(0)-z_1>0$, the last inequality leads to
\begin{align*}
 t^* < -\frac{1}{\nu+\mu} \ln \left( \frac{M-z_1}{z(0)-z_1} \right).
\end{align*}
Now, by the assumption $z(0)\leq M$, we obtain $\frac{M-z_1}{z(0)-z_1}\geq 1$ and hence $t^*\leq 0$ which yields a contradiction.
Next, we assume that $z(0)>M$. In this case, we show that there exists $t^*_2$ such that $z(t^*_2)\leq M$. First, observe that if $z_1\leq M$ then $z_2\leq M$. Hence, $z(0)-z_2>M-z_2\geq 0$. Now, we note that
\begin{align*}
z_2+\left(z(0)-z_2\right) e^{-\nu t}\leq M
\end{align*}
leads to
\begin{align*}
 t > -\frac{1}{\nu} \ln \left( \frac{M-z_2}{z(0)-z_2} \right)>0,
\end{align*}
due to $z(0)-z_2>M-z_2$. Let $t^*_2$ be the first time that $z(t)\leq M$ starting from a point $z(0)>M$. Setting now as initial point $z(t^*_2)$, we conclude that if $z_1\leq M$ then $z(t)\leq M$ for $t\geq t^*_2$ and hence
\begin{align*}
z(t)=z_1+
  \left(z(t^*_2)-z_1\right) e^{-(\nu+\mu)t}, \mbox{ for } t\geq t^*_2,
\end{align*}
which yields
\begin{align*}
|z(t)-z_1|\leq
  \left(z(t^*_2)-z_1\right) e^{-(\nu+\mu)t}
  \leq \left(z(0)-z_1\right) e^{-(\nu+\mu)t}, \mbox{ for } t\geq t^*_2.
\end{align*}
This concludes the proof for the case $z_1\leq M$.  The case $z_2>M$ follows the same logic.

Now, we prove that \eqref{eq:fluid proxy1} is equal to $z$. To this end, observe that
\begin{equation*}
B \max\{ 1, \frac{ z^*}{M}\} \overset {d}=B',
\end{equation*}
where $B'$ is an exponential random variable  with
$\E{B'}= \frac{1}{\mu \max{1, \frac{z^*}{M}}}$. We note that \eqref{eq:fluid proxy1} can be written as
\begin{align*}
z^* = (\lambda \Min \nu K) \E{ \min \{D , B \max\{ 1, \frac{ z^*}{M}\}\}}
    &=(\lambda \Min \nu K) \E{ \min \{D , B'\}}\\
    &=(\lambda \Min \nu K)\frac{1}{\nu+\mu \max\{1, \frac{z^*}{M}\}}.
\end{align*}
Solving the last equation yields $z^*=z$.

To conclude the proof of Proposition~\ref{prop:fluid approximation}, it remains to show the uniqueness of the invariant point $z^*$. In other worlds, we show that \eqref{eq:solutionfix} (and hence \eqref{eq:fluid proxy1}) has a unique solution.
It is not hard to see that if
$z_1< M$ then $z_2< M$. So, $z_2$ cannot be the solution of \eqref{eq:solutionfix}. That is, $z_1$ is the unique solution of \eqref{eq:solutionfix}. On the other hand, if $z_1>M$ (i.e., it is not solution of \eqref{eq:solutionfix}), then $z_2>M$. That is, $z_2$ is the unique solution of \eqref{eq:solutionfix}. Last, if $z_1=M$, then we have that $z_2=z_1=M$.
In any case, \eqref{eq:solutionfix} has a unique solution.
\end{proof}

\begin{proof}[Proof of Proposition~\ref{prop:interchange}]
Let $\frac{Z^n(\infty)}{n}$ be the stationary fluid scaled number of uncharged EVs.
We know that $0 \leq Z^n(\infty)\leq K^n$, which yields $\frac{Z^n(\infty)}{n}\leq K$ almost surely. In other worlds, the sequence of random variables $\frac{Z^n(\infty)}{n}$ is stochastically bounded in $\R$ and hence it is tight. Now, we consider the process $\{Z^n(t), t\geq 0\}$ starting at point $Z^n(\infty)$. That is,
$Z^n(t) \overset{d}= Z^n(\infty)$ for any $t\geq 0$.
Since $\frac{Z^n(\infty)}{n}$ is tight for any convergent subsequence, there exists a further subsequence, say $\frac{Z^{\bar{n}}(\infty)}{\bar{n}}$, such that
$\frac{Z^{\bar{n}}(\infty)}{\bar{n}}\overset{d}\rightarrow \bar{z}^*$,
as $\bar{n}\rightarrow \infty$.
We now have  that for any $t\geq 0$,
\begin{equation*}
\frac{Z^{\bar{n}}(t)}{\bar{n}}\overset{d}=
\frac{Z^{\bar{n}}(\infty)}{\bar{n}}\overset{d}\rightarrow \bar{z}^*, \mbox{ as } \bar{n}\rightarrow \infty,
\end{equation*}
and so $\bar{z}^*$ in an invariant point. By the uniqueness of the invariant point we derive $\bar{z}^*=z^*$. This concludes the proof.
\end{proof}

\subsection{Proofs for Section~\ref{sec:DiffAp}}\label{sec:HWregime}
\subsubsection{Proof of Theorem~\ref{Th:diffusion}}\label{sec:HW}
We start the analysis by establishing a continuity result, which can be proved by using results in \cite{pang2007martingale}

\begin{proposition}\label{Th: existence}
Let $t\geq 0$ and $-\infty < \kappa < \infty$. Consider the following system
\begin{equation}\label{eq:system}
\begin{split}
   x_1(t)&=b_1+g_1(t)+\int_{0}^{t} h_1(x_1(s))ds-y(t), \\
   x_2(t)&=b_2+g_2(t)+\int_{0}^{t} h_2(x_2(s))ds-y(t),
\end{split}
\end{equation}
where $b_i$ are positive constants, $h_i: \R \rightarrow \R$ satisfy $h_i(0)=0$ and are
Lipschitz continuous functions for $i=1,2$, and $x_2(t)\leq \kappa$.
In addition, $y(\cdot)$ is a nondecreasing nonnegative function
in $D[0,\infty)$
such that \eqref{eq:system} holds and
$\int_{0}^{\infty} \ind{ x_2(t)<\kappa}dy(t)=0$. Given $b_i \in \R$
and $g_i(\cdot) \in D[0,\infty)$, we have that the system \eqref{eq:system}
has a unique solution $(x_1(\cdot),x_2(\cdot),y(\cdot))$.
Moreover, the functions $(x_1(\cdot),x_2(\cdot),y(\cdot))$
are continuous in $D[0,\infty)^3$ if $D[0,\infty)$ is endowed with the uniform
topology over bounded intervals or the $J_1$ topology.
\end{proposition}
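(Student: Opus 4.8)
The plan is to exploit a decoupling that is hidden in \eqref{eq:system}. Although the two equations carry the same regulator $y(\cdot)$, only $x_2$ is constrained: one has $x_2(t)\le\kappa$ and $y$ increases solely on $\{x_2=\kappa\}$. Hence $(x_2,y)$ is exactly the solution of a one-dimensional Skorokhod reflection problem at the upper barrier $\kappa$ driven by the ``free input'' $t\mapsto b_2+g_2(t)+\int_0^t h_2(x_2(s))\,ds$; the regulator $y$ is thereby pinned down by the second line of \eqref{eq:system} alone, and the first line then determines $x_1$ as the solution of a Volterra integral equation with forcing $b_1+g_1(\cdot)-y(\cdot)$ and Lipschitz nonlinearity $h_1$. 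The proof accordingly proceeds in three steps: (i) solve the reflected equation for $(x_2,y)$; (ii) solve the Volterra equation for $x_1$; (iii) establish a Gronwall-type stability estimate that upgrades uniqueness to continuity of the data-to-solution map. Throughout I would follow the strategy of \cite{pang2007martingale}, the additional feature being the second coordinate $x_1$, which the decoupling renders harmless.

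For step (i), set $u(t):=\kappa-x_2(t)\ge 0$ and $\tilde h(v):=-h_2(\kappa-v)$, which is Lipschitz. A short computation rewrites the second line of \eqref{eq:system} as $u=\Psi(\eta_u)$, where $\eta_u(t):=(\kappa-b_2)-g_2(t)+\int_0^t\tilde h(u(s))\,ds$ and $\Psi$ is the one-dimensional reflection map at $0$, $\Psi(f)(t)=f(t)-\inf_{0\le s\le t}\big(f(s)\wedge 0\big)$; recall that $\Psi$ is Lipschitz continuous for the uniform norm on every bounded interval, and maps $D[0,\infty)$ into itself (cf.\ \cite{chen2001fundamentals}). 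Viewing $u$ as a fixed point of $\Gamma u:=\Psi(\eta_u)$ and combining the Lipschitz bounds for $\Psi$ and $\tilde h$, one obtains, for any two candidate solutions with the same data, $\sup_{s\le t}|u(s)-u'(s)|\le C\int_0^t\sup_{r\le s}|u(r)-u'(r)|\,ds$, so Gronwall gives uniqueness; existence follows from the same estimate run as a contraction on a short interval and then concatenated over $[0,T]$. This produces $x_2=\kappa-u$ and $y(t)=x_2(t)-b_2-g_2(t)-\int_0^t h_2(x_2(s))\,ds$, and the defining property of $\Psi$ makes $y$ nondecreasing, nonnegative and flat off $\{x_2=\kappa\}$, as required.

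For step (ii), with $y$ now determined the first line of \eqref{eq:system} reads $x_1(t)=\big(b_1+g_1(t)-y(t)\big)+\int_0^t h_1(x_1(s))\,ds$, a Volterra equation with right-continuous forcing and Lipschitz $h_1$, uniquely solvable in $D[0,\infty)$ by Picard iteration. For step (iii), comparing the solutions attached to two data sets $(b_i,g_i)$ and $(b_i',g_i')$ and using again the Lipschitz property of $\Psi$, of $\tilde h$, and then of $h_1$, Gronwall yields bounds of the form $\sup_{s\le t}\big(|x_2(s)-x_2'(s)|+|y(s)-y'(s)|\big)\le C_t\big(|b_2-b_2'|+\sup_{s\le t}|g_2(s)-g_2'(s)|\big)$, and, inserting this into the Volterra equation, the corresponding bound for $\sup_{s\le t}|x_1(s)-x_1'(s)|$ in terms of all four data. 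Hence the solution map is Lipschitz, in particular continuous, from $\R^2\times D[0,\infty)^2$ to $D[0,\infty)^3$ in the topology of uniform convergence over bounded intervals. Since all the solutions so constructed are continuous functions, a sequence of them converges in $(D[0,\infty),J_1)$ if and only if it converges uniformly on compacts (cf.\ \cite{billingsley1999convergence}), so the statement holds for the $J_1$ topology as well.

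I expect the only genuinely delicate point to be the Lipschitz estimate for the composition of the reflection map $\Psi$ with the nonlinear drift $\tilde h$ in step (i): this is what forces the Gronwall loop and prevents a direct appeal to the linear reflected-OU theory. Once that estimate is in place, uniqueness, existence and continuity all flow from it, and the decoupling observation reduces the two-dimensional problem to two successive one-dimensional ones, so that no multidimensional Skorokhod map is needed.
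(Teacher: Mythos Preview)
Your approach is essentially the same as the paper's: both exploit the decoupling that $y$ is determined by the second equation alone, solve the reflected problem for $(x_2,y)$, and then feed $y$ into the first equation as a Volterra equation for $x_1$. The paper's proof is much terser, simply invoking \cite[Theorem~7.3]{pang2007martingale} for the reflected equation and \cite[Theorem~4.1]{pang2007martingale} for the integral equation, whereas you effectively unwind those cited results by writing out the reflection-map fixed point, the Gronwall loop, and the Picard iteration explicitly. That added detail is fine and arguably more self-contained.

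One small slip: in your final sentence you assert that ``all the solutions so constructed are continuous functions'' of $t$ and use this to pass from uniform to $J_1$ continuity. For general $g_i\in D[0,\infty)$ this is false, since $x_i$ inherits the jumps of $g_i$ (and $y$ can jump when the free input jumps across the barrier). What is actually needed, and what the paper implicitly relies on, is that in the application the \emph{limit} inputs are continuous Brownian paths, so $J_1$-convergence to a continuous limit is equivalent to uniform convergence on compacts; the general $J_1$ continuity of the solution map for discontinuous data requires the more careful argument already present in \cite{pang2007martingale}, not the shortcut you propose.
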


\begin{proof}[Proof of Theorem~\ref{Th: existence}]
First, observe that the function $y(\cdot)$ is independent of the function $x_1(\cdot)$.
We know by \cite[Theorem 7.3]{pang2007martingale} that the second equation
of \eqref{eq:system} has a unique solution $(x_2(\cdot),y(\cdot))$ and that $x_2(\cdot)$,
$y(\cdot)$ are continuous in $D[0,\infty)$ (endowed with the uniform topology over bounded
intervals or the $J_1$ topology). Furthermore, we have that $y(\cdot)$, $g_2(\cdot) \in
D[0,\infty)$
which implies $y(\cdot)+g_2(\cdot) \in D[0,\infty)$. The last observation together
with \cite[Theorem 4.1]{pang2007martingale} implies that the
first equation of \eqref{eq:system} has a unique continuous solution.
That is, the system \eqref{eq:system} has a unique
solution $(x_1(\cdot),x_2(\cdot),y(\cdot))$ and each function is continuous.
\end{proof}
In order to continue our analysis, we need to define
appropriate filtrations. Take the following filtrations, for $n\geq 1$,
\begin{align*}
\mathcal{F}_{t,1}^{n}=\sigma \left(Z^n(0), N_{\lambda^n} (s),
N_{\mu}\left(\int_{0}^{s} L^n(Z^n(z)) dz\right),
N_{\nu,1}\left(\int_{0}^{s} Z^n(z) dz\right): 0 \leq s \leq t \right)
 \end{align*}
and
\begin{equation*}
\mathcal{F}_{t,2}^{n}=\sigma \left(Q^n(0), N_{\lambda^n} (s),
N_{\nu}\left(\int_{0}^{s} Q^n(z) dz\right): 0 \leq s \leq t \right).
 \end{equation*}
In the sequel, we work with the filtrations
 \begin{equation*}
\mathcal{F}_{t}^{n}=\sigma \left(\mathcal{F}_{t,1}^{n}, \mathcal{F}_{t,2}^{n} \right),
 \end{equation*}
augmented by including all the null sets for $t\geq 0$ and $n\geq 1$.

Now, notice that the system dynamics \eqref{eq:numberQ} and \eqref{eq:numberU} can be
rewritten in the following form
\begin{equation*}
\begin{split}
Q^n(t)=Q^n(0)+N_{\lambda^n} (t)
-N_{\nu}\left(\int_{0}^{t}Q^n(s) ds\right)-Y^n(t)
\end{split}
\end{equation*}
and
\begin{align*}
Z^n(t)=Z^n(0)+N_{\lambda^n} (t)
-N_{\mu}\left(\int_{0}^{t} L^n(Z^n(s)) ds\right)
-N_{\nu,1}\left(\int_{0}^{t} Z^n(s) ds\right)
-Y^n(t),
\end{align*}
where $Y^n(t)=\int_{0}^{t} \ind{Q^n(s)=K^n}dN_{\lambda^n} (s)$.
The process $Y^n(t)$ counts all the customers that are lost when
all the servers (chargers) are busy up to time $t$ in the $n^{\text{th}}$ system.
Defining the operator $\mathcal{M}_{r}(\cdot ):=N_{r} (\cdot)-(r\cdot)$, where $``r"$
indicates the rate of the Poisson process $N_{r} (\cdot)$, the system dynamics
take the following form:
\begin{align}\label{Eq:MartingaleQ}
Q^n(t)=Q^n(0)
+\mathcal{M}_{\lambda^n}(t)
-\mathcal{M}_{\nu}\left(\int_{0}^{t}Q^n(s) ds\right)
+\lambda^n t
-\nu \int_{0}^{t}Q^n(s) ds
-Y^n(t)
\end{align}
and
\begin{align}\label{Eq:MartingaleU}
Z^n(t)=Z^n(0)
+\mathcal{M}_{\lambda^n}(t)
-\mathcal{M}_{\mu}\left(\int_{0}^{t} L^n(Z^n(s)) ds\right)
-\mathcal{M}_{\nu,1}\left(\int_{0}^{t} Z^n(s) ds\right)\nonumber\\
+\lambda^n t
-\mu \int_{0}^{t} L^n(Z^n(s)) ds
-\nu \int_{0}^{t} Z^n(s) ds
-Y^n(t).
\end{align}
In order to derive appropriate equations (in the pre-limit) for the diffusion scaled processes,
subtract and add the terms
$n \frac{\nu+\mu}{\nu}$, $n \frac{\nu+\mu}{\nu} t$ in \eqref{Eq:MartingaleQ}
and the terms
$n$, $n \mu t$, $n \nu  t$ in \eqref{Eq:MartingaleU},
and then divide both by $\sqrt{n}$. Recalling that $L^n(Z^n(t))= Z^n(t)\Min M^n$,
$\lambda^n=n(\nu+\mu)$, $M^n=\frac{\lambda^n}{\nu+\mu}+\beta\sqrt{n}$ and
observing that $M^n-n=\beta\sqrt{n}$ and $\lambda^n t- n(\nu+\mu)t=0 $,
we obtain the following equations for the diffusion scaled processes
$\hat Q^n(\cdot)$ and $\hat Z^n(\cdot)$,
\begin{align}\label{Eq:difQ}
\hat Q^n(t)=\hat Q^n(0)
+ \mathcal{\hat M}_{\lambda^n}^n(t)
- \mathcal{\hat M}_{\nu}^n \left(\int_{0}^{t}Q^n(s) ds\right)
-\nu \int_{0}^{t}\hat Q^n(s) ds
-\hat Y^n(t)
\end{align}
and
\begin{align}\label{Eq:difU}
\hat Z^n(t)=\hat Z^n(0)
+ \mathcal{\hat M}_{\lambda^n}^n (t)
-\mathcal{\hat M}_{\mu}^n \left(\int_{0}^{t} L^n(Z^n(s)) ds\right)
- \mathcal{\hat M}_{\nu,1}^n\left(\int_{0}^{t} Z^n(s) ds\right) \nonumber \\
-\mu \int_{0}^{t} (\hat Z^n(s)\Min \beta) ds
-\nu \int_{0}^{t} \hat Z^n(s) ds
-\hat Y^n(t),
\end{align}
where $\mathcal{\hat M}_{r}^n(\cdot):= \frac{ \mathcal{ M}_{r}(\cdot)}{\sqrt{n}}$
and the scaling for the process $\hat Y^n(\cdot)$ is analogous. The following proposition shows
that
the processes $\mathcal{\hat M}_r(\cdot)$ are martingales.

\begin{proposition}\label{Pr:Martingales}
Under the assumptions $\E{Z^n(0)}< \infty $ and $\E{Q^n(0)} < \infty $, we have that the processes
$\mathcal{\hat M}_{\lambda^n}^n (\cdot)$,
$\mathcal{\hat M}_{\mu}^n(\cdot)$, $\mathcal{\hat M}_{\nu,1}^n(\cdot)$, and
$\mathcal{\hat M}_{\nu}^n(\cdot)$ are
square-integrable martingales with respect to the filtration
$\mathcal{F}^{n}:=\{\mathcal{F}_{t}^{n}, t \geq 0\}$.
Their associated predictable quadratic variations, denoted by $<\cdot >$, are
\begin{align}\label{}
  <\mathcal{\hat M}_{\lambda^n}^n(t)> =& \frac{\lambda^n}{n} t=(\nu+\mu)t, \label{eq:pq1}\\
  <\mathcal{\hat M}_{\mu}^n \left(\int_{0}^{t} L^n(Z^n(s)) ds\right)>=
  & \mu\frac{\int_{0}^{t} L^n(Z^n(s)) ds}{n}, \label{eq:pq2}\\
  <\mathcal{\hat M}_{\nu,1}^n\left(\int_{0}^{t} Z^n(s) ds\right)> =&
  \nu\frac{\int_{0}^{t} Z^n(s) ds}{n}, \label{eq:pq3}\\
  <\mathcal{\hat M}_{\nu}^n\left(\int_{0}^{t} Q^n(s) ds\right)> =&
  \nu\frac{\int_{0}^{t} Q^n(s) ds}{n},  \label{eq:pq4}
\end{align}
and $\E{<\mathcal{\hat M}_{r}^n(t)>}<\infty$, for $t\geq 0$ and $n \geq 1$.
\end{proposition}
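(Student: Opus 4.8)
The plan is to reduce the statement to standard facts about compensated Poisson processes together with a random-time-change (optional-sampling) argument, following the framework of \cite{pang2007martingale} and \cite{ethier1986markov}. The only model-specific input required is that, for each fixed $n$, the system is a loss system, so that $Q^n(t)\le K^n$ and $Z^n(t)\le Q^n(t)\le K^n$ almost surely, and consequently $L^n(Z^n(t))=Z^n(t)\Min M^n\le M^n$; this boundedness makes all the integrability requirements immediate and removes the only place where a nontrivial estimate would otherwise be needed.

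First I would recall the elementary building blocks: for a rate-$r$ Poisson process $N_r(\cdot)$, the compensated process $\mathcal{M}_r(t)=N_r(t)-rt$ is a square-integrable martingale with respect to its natural filtration, with predictable quadratic variation $\langle\mathcal{M}_r\rangle(t)=rt$, and $\mathcal{M}_r(t)^2-rt$ is again a martingale. Since $N_{\lambda^n}(\cdot)$, $N_{\mu}(\cdot)$, $N_{\nu,1}(\cdot)$, $N_{\nu}(\cdot)$ are mutually independent, and $\mathcal{F}^n_t$ is generated by these processes observed along the adapted clocks $t$, $\int_0^t L^n(Z^n(s))ds$, $\int_0^t Z^n(s)ds$, $\int_0^t Q^n(s)ds$, one checks that the future increments of each driving Poisson process are independent of $\mathcal{F}^n_t$; hence $\mathcal{M}_{\lambda^n}(\cdot)$ is directly an $\mathcal{F}^n$-martingale, and scaling by $1/\sqrt n$ gives \eqref{eq:pq1}.

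Next I would handle the time-changed terms. Set $A^n_L(t):=\int_0^t L^n(Z^n(s))ds$, $A^n_Z(t):=\int_0^t Z^n(s)ds$, $A^n_Q(t):=\int_0^t Q^n(s)ds$. Each is continuous, nondecreasing, $\mathcal{F}^n_t$-adapted, vanishes at $0$, and by the loss-system bounds satisfies $A^n_L(t)\le M^n t$, $A^n_Z(t)\le K^n t$, $A^n_Q(t)\le K^n t$, so in particular $\E{A^n_\bullet(t)}<\infty$. Invoking the optional-sampling/random-time-change theorem for the martingales $\mathcal{M}_{\mu}(\cdot)$, $\mathcal{M}_{\nu,1}(\cdot)$, $\mathcal{M}_{\nu}(\cdot)$ at the bounded stopping times $A^n_\bullet(t)$ (as in \cite[Section~3]{pang2007martingale}), I obtain that $\mathcal{M}_{\mu}(A^n_L(t))$, $\mathcal{M}_{\nu,1}(A^n_Z(t))$, $\mathcal{M}_{\nu}(A^n_Q(t))$ are $\mathcal{F}^n$-martingales with predictable quadratic variations $\mu A^n_L(t)$, $\nu A^n_Z(t)$, $\nu A^n_Q(t)$ respectively (the time changes being continuous, the quadratic variation simply transforms by composition); dividing by $\sqrt n$ yields \eqref{eq:pq2}--\eqref{eq:pq4}. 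Square-integrability then follows from $\E{(\mathcal{\hat M}^n_r(t))^2}=\E{\langle\mathcal{\hat M}^n_r\rangle(t)}\le rK^n t/n<\infty$ (with $M^n$ in place of $K^n$ for the $\mu$-term), using once more that $\mathcal{M}_r^2-\langle\mathcal{M}_r\rangle$ is a martingale and optional sampling at the bounded time $A^n_\bullet(t)$.

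The main obstacle is the bookkeeping around the filtration: one must verify that the composite clocks $A^n_\bullet(t)$ are genuine stopping times for the natural filtrations of the underlying Poisson processes in spite of the feedback (the rate of $N_{\mu}$'s clock depends on $Z^n$, which is itself driven by $N_{\mu}$), and that $\mathcal{F}^n_t$ is rich enough that the time-changed compensated processes are adapted while future Poisson increments remain conditionally mean-zero. Once the sample-path construction of Section~\ref{sec:System dynamics} is taken as given, this verification is routine and is precisely the setting treated in \cite{pang2007martingale}, so I would carry it out by quoting that machinery rather than redoing it in detail.
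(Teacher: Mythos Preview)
Your approach follows the same random-time-change/optional-sampling route as the paper, invoking the machinery of \cite{pang2007martingale}, so the architecture is correct. Your use of the loss-system bounds $Z^n(t)\le Q^n(t)\le K^n$ and $L^n(Z^n(t))\le M^n$ to make the clocks $A^n_\bullet(t)$ deterministically bounded is in fact cleaner than what the paper does: the paper instead uses the crude inequalities $Z^n(t)\le Z^n(0)+N_{\lambda^n}(t)$ and $Q^n(t)\le Q^n(0)+N_{\lambda^n}(t)$ (this is where the hypotheses $\E{Z^n(0)},\E{Q^n(0)}<\infty$ enter) and then appeals to \cite[Lemma~3.4]{pang2007martingale}. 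Your bounds render those moment hypotheses superfluous for this proposition.

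There is, however, one genuine gap. You assert that $N_{\lambda^n},N_\mu,N_{\nu,1},N_\nu$ are mutually independent. In the sample-path construction of Section~\ref{sec:System dynamics} the independent primitives are $N_\lambda,N_\mu,N_{\nu,1},N_{\nu,2}$, and $N_\nu\bigl(\int_0^t Q^n(s)\,ds\bigr)$ is not a fifth independent Poisson process but is the sum $N_{\nu,1}\bigl(\int_0^t Z^n(s)\,ds\bigr)+N_{\nu,2}\bigl(\int_0^t C^n(s)\,ds\bigr)$ (this is what \eqref{eq:mergeP} encodes). In particular $N_\nu$ and $N_{\nu,1}$ share a component and are not independent, so your sentence ``future increments of each driving Poisson process are independent of $\mathcal{F}^n_t$'' does not apply to $N_\nu$ as stated, and the filtration $\mathcal{F}^n_t$ already sees $N_{\nu,1}$ through $\mathcal{F}^n_{t,1}$. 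The fix---and this is exactly what the paper does---is to write
\[
\hat{\mathcal{M}}^n_\nu\Bigl(\int_0^t Q^n(s)\,ds\Bigr)=\hat{\mathcal{M}}^n_{\nu,1}\Bigl(\int_0^t Z^n(s)\,ds\Bigr)+\hat{\mathcal{M}}^n_{\nu,2}\Bigl(\int_0^t C^n(s)\,ds\Bigr),
\]
observe that $C^n=Q^n-Z^n$ is $\mathcal{F}^n$-adapted so $\int_0^t C^n(s)\,ds$ is a legitimate time change, establish that each summand is an $\mathcal{F}^n$-martingale via your optional-sampling argument applied to the genuinely independent $N_{\nu,1}$ and $N_{\nu,2}$, and then sum.
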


\begin{proof}
Fix $n\geq 1$. The result for the process $\mathcal{\hat M}_{\lambda^n}^n (\cdot)$ follows
immediately by applying
\cite[Lemma 3.1]{pang2007martingale}.

Now, note that by the system dynamics \eqref{eq:numberQ}, \eqref{eq:numberU}, the fact that $Q^n(t)=Z^n(t)+C^n(t)$, and \eqref{eq:mergeP} we have that
\begin{equation}\label{eq:DecomQ}
\mathcal{\hat M}_{\nu}^n\left(\int_{0}^{t} Q^n(s) ds\right)=
\mathcal{\hat M}_{\nu,1}^n\left(\int_{0}^{t} Z^n(s) ds\right)
+\mathcal{\hat M}_{\nu,2}^n\left(\int_{0}^{t} C^n(s) ds\right),
\end{equation}
where
\begin{equation*}
  \mathcal{\hat M}_{\nu,2}^n\left(\int_{0}^{t} C^n(s) ds\right)
= \frac{
N_{\nu,2}^n\left(\int_{0}^{t} C^n(s) ds\right)-\nu\int_{0}^{t} C^n(s) ds
}
{\sqrt{n}}.
\end{equation*}
Since $N_{\nu,1}(\cdot), N_{\nu,2}(\cdot)$ are independent Poisson processes,
\cite[Lemma 3.1]{pang2007martingale} implies that $\mathcal{\hat M}_{\nu,i}^n(\cdot)$ are
$\mathcal{F}^{n}$-martingales for $i=1,2$. Observe that $\mathcal{\hat M}_{\nu,2}^n(\cdot)$ is
adapted to the filtration $\mathcal{F}^{n}$ as the latter contains all the information about the
processes $Q^n(\cdot)$ and $Z^n(\cdot)$ for fixed $n$. This is enough to determine the process
$C^n(\cdot)$ at any $t\geq 0$.
Using \eqref{eq:DecomQ}, the assumptions
$\E{Z^n(0)}, \E{Q^n(0)} < \infty $, the inequalities
\begin{align}\label{eq:crude}
  Z^n(t) & \leq Z^n(0)+N_{\lambda^n}(t), \\
  Q^n(t) & \leq Q^n(0)+N_{\lambda^n}(t), \nonumber
\end{align}
and adapting the proof in \cite[Lemma 3.4]{pang2007martingale}, we obtain that for fixed
$n\geq 1$, the following moments conditions are satisfied
\begin{align*}
 &\E{\int_{0}^{t} Q^n(s) ds}  < \infty,\qquad
 \E{\int_{0}^{t} Z^n(s) ds} < \infty, \\
 &\E{\int_{0}^{t} C^n(s) ds} \leq   \E{\int_{0}^{t} Q^n(s) ds}< \infty, \\
 &\E{\int_{0}^{t} L^n(Z^n(s)) ds}=
 \E{\int_{0}^{t} Z^n(s)\Min M^n ds} < \infty.
\end{align*}
Also, again by \cite[Lemma 3.4]{pang2007martingale}, we derive that the following moments related
to the Poisson processes are finite.
\begin{align*}
 &\E{N_{\nu}^n\left(\int_{0}^{t} Q^n(s) ds\right)}  < \infty,\qquad
 \E{N_{\nu,1}^n\left(\int_{0}^{t} Z^n(s) ds\right)} < \infty, \\
 &\E{N_{\nu,2}^n\left(\int_{0}^{t} C^n(s) ds\right)}  < \infty, \qquad
 \E{N_{\mu}^n\left(\int_{0}^{t} L^n(Z^n(s)) ds\right)} < \infty.
\end{align*}
Now, the result follows by the conclusion of the proof of
\cite[Theorem~7.1]{pang2007martingale}. To this end, observe that
$\left(\int_{0}^{t} Z^n(s) ds, \int_{0}^{t} C^n(s) ds,
\int_{0}^{t} Z^n(s)\Min M^n ds\right)$ is a $\mathcal{F}^{n}$-stoping time. Thus, applying
\cite[Theorem~8.7]{ethier1986markov}, we derive that
\begin{equation*}
\left(\mathcal{\hat M}_{\lambda^n}^n (t),
\mathcal{\hat M}_{\mu}^n\left(\int_{0}^{t} L^n(Z^n(s)) ds\right),
\mathcal{\hat M}_{\nu,1}^n\left(\int_{0}^{t} Z^n(s) ds\right),
\mathcal{\hat M}_{\nu,2}^n\left(\int_{0}^{t} C^n(s) ds\right)\right)
\end{equation*}
is an $\mathcal{F}^{n}$-martingale. Last, note that by \eqref{eq:DecomQ}, as $\mathcal{\hat M}_{\mu}^n(\cdot)$ is adapted to the
filtration $\mathcal{F}^{n}$, we obtain that
$\mathcal{\hat M}_{\nu}^n\left(\int_{0}^{t} Q^n(s) ds\right)$
is also an $\mathcal{F}^{n}$-martingale.
\end{proof}
In order to apply the martingale central limit theorem, we first need to show that the
corresponding fluid scaled processes converge to a deterministic function (step 3), i.e.,
\begin{equation}\label{eq:fluid limitU}
\bar Z^n(\cdot):=
\frac{Z^n(\cdot)}{n}
\overset{d} \rightarrow
e(\cdot)
\end{equation}
and
\begin{equation}\label{eq:fluid limitQ}
\bar Q^n(\cdot):=
\frac{Q^n(\cdot)}{n}
\overset{d} \rightarrow
\frac{\nu+\mu}
{\mu}e(\cdot),
\end{equation}
where the function $e: [0,\infty)\rightarrow [0,\infty)$ is defined by $e(t)\equiv 1$. The
following proposition presents the fluid limit.

\begin{proposition}\label{Pr:fluid limit}
If
$
\bar Z^n(0):=
\frac{Z^n(0)}{n}
\overset{d} \rightarrow
e(\cdot)
$
and
$
\bar Q^n(0):=
\frac{Q^n(0)}{n}
\overset{d} \rightarrow
\frac{\nu+\mu}
{\mu}e(\cdot),
$
then
\eqref{eq:fluid limitU} and \eqref{eq:fluid limitQ} hold
as $n \rightarrow \infty$.
\end{proposition}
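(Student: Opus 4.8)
The plan is to prove a functional weak law of large numbers in the spirit of \cite{pang2007martingale}, starting from the martingale decompositions \eqref{Eq:MartingaleQ}--\eqref{Eq:MartingaleU} and Proposition~\ref{Pr:Martingales}. Dividing \eqref{Eq:MartingaleQ} and \eqref{Eq:MartingaleU} by $n$ and using $\lambda^n=n(\nu+\mu)$, $L^n(Z^n(t))=Z^n(t)\Min M^n$ and $M^n/n\to1$, the scaled processes satisfy
\[
\bar Q^n(t)=\bar Q^n(0)+\tfrac1n\mathcal{M}_{\lambda^n}(t)-\tfrac1n\mathcal{M}_{\nu}\!\left(\int_0^tQ^n(s)\,ds\right)+(\nu+\mu)t-\nu\!\int_0^t\bar Q^n(s)\,ds-\tfrac1nY^n(t),
\]
\begin{align*}
\bar Z^n(t)={}&\bar Z^n(0)+\tfrac1n\mathcal{M}_{\lambda^n}(t)-\tfrac1n\mathcal{M}_{\mu}\!\left(\int_0^tL^n(Z^n(s))\,ds\right)-\tfrac1n\mathcal{M}_{\nu,1}\!\left(\int_0^tZ^n(s)\,ds\right)\\
&+(\nu+\mu)t-\mu\!\int_0^t\!\Big(\bar Z^n(s)\Min\tfrac{M^n}{n}\Big)ds-\nu\!\int_0^t\bar Z^n(s)\,ds-\tfrac1nY^n(t).
\end{align*}
I would then proceed in four steps: (i) show that the four rescaled martingales vanish uniformly on compacts; (ii) show that the loss term $\tfrac1nY^n(\cdot)$ vanishes and identify the limit of $\bar Q^n$; (iii) establish $C$-tightness of $\{\bar Z^n\}$ and derive a Lipschitz ODE for any limit point; (iv) invoke uniqueness of that ODE and, both limits being deterministic, conclude joint convergence.

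For step (i), the crude bounds \eqref{eq:crude} give $\int_0^TL^n(Z^n(s))\,ds\le\int_0^TZ^n(s)\,ds\le T\,Z^n(0)+T\,N_{\lambda^n}(T)$ and similarly $\int_0^TQ^n(s)\,ds\le T\,Q^n(0)+T\,N_{\lambda^n}(T)$. Since $Z^n(0),Q^n(0)\le K^n=O(n)$ almost surely (the system is a loss system), the predictable quadratic variations \eqref{eq:pq1}--\eqref{eq:pq4} of the $\tfrac1n$-rescaled martingales are $O(1/n)$ in expectation on $[0,T]$, so Doob's $L^2$ maximal inequality yields $\sup_{t\le T}\big|\tfrac1n\mathcal{M}_{\bullet}(t)\big|\to0$ in probability for each of the four martingale terms; in particular, all subsequential limits appearing below are continuous.

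Step (ii) is the crux, and I expect it to be the main obstacle: the fluid-scaled system is \emph{critically loaded at the boundary} (the Halfin--Whitt scaling builds in the balance $\lambda^n/n\to(\nu+\mu)=\nu\cdot\lim_nK^n/n$), so one must argue that the accumulated losses are $o(n)$ rather than of order $n$. Since $Q^n$ is the occupancy of an $M/M/K^n/K^n$ loss queue (cf.\ \eqref{eq:mergeP}), $\tfrac1nY^n(\cdot)$ is nondecreasing, increases only when $\bar Q^n=K^n/n$, and satisfies $\tfrac1nY^n(T)\le\tfrac1nN_{\lambda^n}(T)$; hence $\{(\bar Q^n,\tfrac1nY^n)\}$ is $C$-tight. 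Writing $\bar K:=\lim_nK^n/n$ (which coincides with the prescribed limiting initial value of $\bar Q^n$), any subsequential limit $(\bar q,\bar y)$ solves the one-dimensional Skorokhod problem on $(-\infty,\bar K]$,
\[
\bar q(t)=\bar K+(\nu+\mu)t-\nu\!\int_0^t\bar q(s)\,ds-\bar y(t),\qquad \bar y\ \text{nondecreasing},\qquad\int_0^\infty\ind{\bar q(s)<\bar K}\,d\bar y(s)=0.
\]
Because the free drift $x\mapsto(\nu+\mu)-\nu x$ vanishes exactly at the upper barrier $\bar K$, the unique solution is $\bar q\equiv\bar K$, $\bar y\equiv0$; this simultaneously gives $\tfrac1nY^n(\cdot)\to0$ in probability, uniformly on compacts, and identifies the limit of $\bar Q^n$ as in \eqref{eq:fluid limitQ}. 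Alternatively, one may invoke the loss-queue fluid-limit arguments of \cite{robert2013,kang2015} already used in the proof of Proposition~\ref{prop:fluid limit}, whose limiting loss rate $\big(\lambda^n/n-\nu K^n/n\big)^+$ tends to $0$ under the present scaling.

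Finally, for steps (iii)--(iv): with the martingale and loss terms removed, the remaining integral terms in the $\bar Z^n$-equation have integrands bounded by $\mu\,M^n/n+\nu\,K^n/n=O(1)$ (using $\bar Z^n(\cdot)\le K^n/n$), so $\{\bar Z^n\}$ is $C$-tight, and, using $M^n/n\to1$ and continuity of the relevant integral maps, every subsequential limit $\bar z$ satisfies
\[
\bar z(t)=\bar z(0)+(\nu+\mu)t-\mu\!\int_0^t\big(\bar z(s)\Min1\big)\,ds-\nu\!\int_0^t\bar z(s)\,ds,\qquad\bar z(0)=1.
\]
Since $x\mapsto(\nu+\mu)-\mu(x\Min1)-\nu x$ is Lipschitz, this ODE has a unique solution, and a direct check shows that $\bar z\equiv1=e(\cdot)$ solves it; hence $\bar Z^n\overset{d}\rightarrow e(\cdot)$, which is \eqref{eq:fluid limitU}. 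Because both limit processes are deterministic, the two marginal convergences upgrade to the joint convergence of $(\bar Z^n,\bar Q^n)$ to the deterministic pair in \eqref{eq:fluid limitU}--\eqref{eq:fluid limitQ}, which completes the argument.
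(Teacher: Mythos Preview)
Your argument is correct, but it follows a different route from the paper's. You work directly at the fluid scale: you kill the $1/n$-scaled martingales via Doob's inequality, then handle the loss term $\tfrac1nY^n$ by passing to a limiting one-dimensional Skorokhod problem at the upper barrier $\bar K=(\nu+\mu)/\nu$ (the $(\nu+\mu)/\mu$ in the statement is a typo), observe that the free drift vanishes exactly at the barrier so $\bar y\equiv0$, and finally close with $C$-tightness and ODE uniqueness for $\bar Z^n$. The paper instead works at the \emph{diffusion} scale: it uses the representations \eqref{Eq:difQ}--\eqref{Eq:difU} for $\hat Q^n,\hat Z^n$, bounds the predictable quadratic variations \eqref{eq:pq1}--\eqref{eq:pq4} by $O(1)$, and then invokes Lemmas~5.5, 5.8 and 5.9 of \cite{pang2007martingale} to conclude that $\hat Q^n,\hat Z^n$ are stochastically bounded in $D[0,\infty)$, whence $\hat Q^n/\sqrt n\to0$ and $\hat Z^n/\sqrt n\to0$, which is exactly \eqref{eq:fluid limitQ}--\eqref{eq:fluid limitU}. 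The paper's shortcut avoids any explicit treatment of the loss term or fluid ODE---the reflection is absorbed into Lemma~5.5---and recycles the very same quadratic-variation estimates that are needed anyway for the martingale CLT in Theorem~\ref{Th:diffusion}. Your approach is longer but more self-contained and makes the critical-loading mechanism at the boundary transparent; it would also be the natural route if one did not have the \cite{pang2007martingale} toolbox at hand.
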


\begin{proof}
We prove the fluid limits using the martingale representations \eqref{Eq:difQ} and
\eqref{Eq:difU}. If the sequences $\{\hat Z^n(\cdot)\}$
and $\{\hat Q^n(\cdot)\}$ are stochastically bounded in $D[0,\infty)$ then by
\cite[Lemma~5.9]{pang2007martingale}, we have that

$$\frac{\hat Z^n(\cdot)}{\sqrt{n}} \overset{d} \rightarrow \eta(\cdot) \mbox{ and }
\frac{\hat Q^n(\cdot)}{\sqrt{n}} \overset{d} \rightarrow \eta(\cdot),
$$
where the function $\eta: [0,\infty)\rightarrow [0,\infty)$ is defined by $\eta(t)\equiv 0$. Note
that the last limits are equivalent to \eqref{eq:fluid limitU} and
\eqref{eq:fluid limitQ}.

The diffusion scaled processes $\hat Z^n(\cdot)$ and $\hat Q^n(\cdot)$ have the martingale
representation \eqref{Eq:difQ} and \eqref{Eq:difU}. In order to prove that they are
stochastically bounded in $D[0,\infty)$, it is enough to show that the corresponding martingales are
stochastically bounded in $D[0,\infty)$; see \cite[Lemma~5.5]{pang2007martingale}. But by
\cite[Lemma~5.8]{pang2007martingale} the martingales are stochastically bounded if the sequence
of their predictable quadratic variations \eqref{eq:pq1}--\eqref{eq:pq4} are stochastically
bounded in $\R$ for each $t\geq 0$.
To prove that the sequences of the predictable quadratic variations of the martingales in
expressions \eqref{Eq:difQ} and \eqref{Eq:difU} are stochastically bounded in $\R$, we use
\eqref{eq:crude}, \cite[Lemma~6.2]{pang2007martingale} and the fact that for any
$t\geq 0$, $\frac{Q^n(t)}{n}\leq \frac{K^n}{n}< \infty$.

The predictable quadratic variation for the arrival process \eqref{eq:pq1} is obviously bounded
as it is a deterministic function. For \eqref{eq:pq1}, we have that
\begin{equation*}
  \mu\frac{\int_{0}^{t} L^n(Z^n(s)) ds}{n}\leq \mu t \frac{M^n}{\sqrt{n}}
  =\mu t(1+\frac{\beta}{\sqrt{n}})\leq \mu t(1+\beta).
\end{equation*}
The result for \eqref{eq:pq3} follows by applying \eqref{eq:crude} and
\cite[Lemma~6.2]{pang2007martingale}.
Last, applying the inequality $\frac{Q^n(t)}{n}\leq \frac{K^n}{n}$ in \eqref{eq:pq4} we obtain
\begin{equation*}
  \nu\frac{\int_{0}^{t} Q^n(s) ds}{n}\leq \nu t \frac{K^n}{n}=
   \nu t(\frac{\nu+\mu}{\nu}+\frac{\kappa}{\sqrt{n}})\leq
   \nu t(\frac{\nu+\mu}{\nu}+\kappa).
\end{equation*}
\end{proof}
Before we move to the final step of the proof of Theorem~\ref{Th:diffusion},
we make a remark for the fluid limit of the number of fully charged EVs in the , which we
need later.
\begin{remark}\label{rm:C} Note that the diffusion scaled process $\{\hat C^n(\cdot)\}$ is also
stochastically bounded in $D[0,\infty)$ and the fluid limit is given by the difference
$\frac{\nu+\mu}{\nu}-1=\frac{\mu}{\nu}$.
\end{remark}

Now, we are ready to put all the pieces together, leading to the last step of the proof of
Theorem~\ref{Th:diffusion}.

\begin{proof}[Proof of Theorem~\ref{Th:diffusion}]
By Proposition~\ref{Pr:fluid limit} and the continuous mapping theorem
\cite[Theorem~1.2]{durrett1996stochastic}, we have that as $n\rightarrow \infty$,
\begin{align*}
  &<\mathcal{\hat M}_{\lambda^n}^n(t)> \rightarrow
  (\nu+\mu)t, \quad
  <\mathcal{\hat M}_{\mu}^n \left(\int_{0}^{t} L^n(Z^n(s)) ds\right)>\rightarrow
   \mu t,\\
  &<\mathcal{\hat M}_{\nu,1}^n\left(\int_{0}^{t} Z^n(s) ds\right)> \rightarrow
   \nu t , \quad
  <\mathcal{\hat M}_{\nu}^n\left(\int_{0}^{t} Q^n(s) ds\right)> \rightarrow
  (\nu+\mu)t.
\end{align*}
Applying the martingale central limit theorem in \cite{ethier1986markov}, we have that
\begin{equation}\label{eq:MCLT}
\begin{split}
\left(\mathcal{\hat M}_{\lambda^n}^n(\cdot),
\mathcal{\hat M}_{\mu}^n (\cdot),
\mathcal{\hat M}_{\nu,1}^n(\cdot),
\mathcal{\hat M}_{\nu}^n(\cdot) \right) \overset{d} \rightarrow
\left( \sqrt{\nu+\mu}W_{\lambda}(\cdot),
\sqrt{\mu}W_{\mu}(\cdot), \sqrt{\nu}W_{\nu,1}(\cdot), \sqrt{\nu+\mu}W_{\nu}(\cdot) \right),
\end{split}
\end{equation}
where $W_{\lambda}(\cdot)$, $W_{\mu}(\cdot)$, $W_{\nu,1}(\cdot)$, and $W_{\nu}(\cdot)$ are (non-independent) standard
Brownian motions. It is essential to observe that by \eqref{eq:DecomQ} and Remark~\ref{rm:C}, we
have that
$$\sqrt{\nu+\mu}W_{\nu}\overset{d} = \sqrt{\nu}W_{\nu,1}+\sqrt{\mu}W_{\nu,2},$$
where now $W_{\lambda}(\cdot)$, $W_{\mu}(\cdot)$, $W_{\nu,i}(\cdot)$, $i=1,2$ are
independent standard Brownian motions.
Furthermore, by the properties of the Brownian motion we obtain
\begin{align}
  \sqrt{\nu+\mu}W_{\lambda}-\sqrt{\mu}W_{\mu}-\sqrt{\nu}W_{\nu,1}
  \overset{d}=&\sqrt{2(\nu+\mu)} W_{\hat Z}, \label{eq:relation1} \\
  \sqrt{\nu+\mu}W_{\lambda}-\sqrt{\nu}W_{\nu,1}-\sqrt{\mu}W_{\nu,2}
   \overset{d}=&\sqrt{2(\nu+\mu)} W_{\hat Q}, \label{eq:relation2}
\end{align}
where $W_{\hat{Z}}(\cdot)$ and $W_{\hat{Q}}(\cdot)$ are non-independent standard Brownian motions. Further, we have that
\begin{equation*}
\E{W_{\hat Z}(t) W_{\hat Q}(t)}= \frac{
\E{(\nu+\mu)W_{\lambda}(t)^2}+
\E{\nu W_{\nu,1}(t)^2}}{2(\nu+\mu)}
=\frac{(2\nu+\mu)}{2(\nu+\mu)}t.
\end{equation*}
In addition, by \cite[Theorem~7.3]{pang2007martingale}, we know that $(Q^n(\cdot), Y^n(\cdot))$
satisfies a one-dimensional reflection mapping; see \cite[Section~6.2]{chen2001fundamentals} for
background of the reflection mapping. That is, $Y^n(\cdot)$ is the unique nondecreasing
nonnegative process such that $Q^n(t)\leq K^n$, \eqref{Eq:difQ} holds and
$$\int_{0}^{\infty} \ind{Q^n(t)<K^n}d Y^n(t)=0, $$
which is equivalent to
$$\int_{0}^{\infty} \ind{\hat Q^n(t)<\kappa}d Y^n(t)=0. $$
Now, combining Theorem~\ref{Th: existence}, Proposition~\ref{Pr:Martingales}, \eqref{eq:MCLT},
\eqref{eq:relation1}, and \eqref{eq:relation2} we derive that
\begin{equation*}
  ( \hat Z^n(\cdot),\hat Q^n(\cdot), \hat Y^n(\cdot) )
  \overset{d} \rightarrow (\hat Z(\cdot), \hat Q(\cdot), \hat Y(\cdot)) \quad \text{in}\quad
  D[0,\infty)^3,
\end{equation*}
where the vector
$(\hat Z(\cdot),\hat Q(\cdot), \hat Y(\cdot))$
 is characterized by
\eqref{eq:SDE}. This concludes the proof of Theorem~\ref{Th:diffusion}.
\end{proof}

\subsection{Proofs for Section~\ref{sec:Difover}}
\begin{proof}[Proof of Proposition~\ref{prop:hugear}]
Adding and subtracting the terms $\frac{\nu}{\nu+\mu}K^n$, $\mu \frac{\nu}{\nu+\mu}K^n t$, $\nu
\frac{\nu}{\nu+\mu}K^nt$, and the means of the Poisson processes in \eqref{eq:newZ}, we have that
\begin{align*}
 \Zfn(t)-\frac{\nu}{\nu+\mu}K^n= \Zfn(0)-\frac{\nu}{\nu+\mu}K^n+
\left(N_{\nu}^f\left( n \int_{0}^{t}(\frac{K^n-\Zfn(s)}{n})ds\right)-
\nu n \int_{0}^{t}(\frac{K^n-\Zfn(s)}{n})ds
\right)\\
-\left(N_{\mu}^f\left(\mu n \int_{0}^{t} \frac{\Zfn(s)}{n}\Min \frac{M^n}{n}  ds\right)-
\mu n \int_{0}^{t} \frac{\Zfn(s)}{n}\Min \frac{M^n}{n}  ds
\right)
+\nu  \int_{0}^{t}(K^n-\Zfn(s)+\frac{\nu}{\nu+\mu}K^n)ds\\-
\mu  \int_{0}^{t} (\Zfn(s)-\frac{\nu}{\nu+\mu}K^n)\Min \sqrt{n}\beta ds-(\nu+\mu)
\frac{\nu}{\nu+\mu}K^n t.
\end{align*}
Recalling that $\hat{\mathcal{M}}_{r}^n(\cdot):=\frac{N_{r}^f (\cdot)-(r\cdot)}{\sqrt{n}}$ and
dividing the last equation by $\sqrt{n}$, we have that
\begin{align*}
 \hat{Z}^n_f(t)= \hat{Z}^n_f(0)+
\hat{\mathcal{M}}_{\nu}^n\left(n \int_{0}^{t}\frac{K^n-\Zfn(s)}{n}ds\right)
-\hat{\mathcal{M}}_{\mu}^n\left( n \int_{0}^{t} \frac{\Zfn(s)}{n}\Min \frac{M^n}{n}  ds\right)\\
-\nu  \int_{0}^{t} \hat{Z}^n_f(s)ds-
\mu  \int_{0}^{t}  \hat{Z}^n_f(s)\Min \beta ds+\frac{1}{\sqrt{n}}
\left(\nu K^n-\nu K^n\right) t.
\end{align*}
Observing that the quantity $\int_{0}^{t}\frac{K^n-\Zfn(s)}{n}ds$ is stochastically bounded and
allowing $n \rightarrow \infty$ in the last equation, we derive
\begin{equation*}
\hat{Z}_f(t)= -\nu \int_{0}^{t} \hat{Z}_f(s)ds -
\mu\int_{0}^{t} \hat{Z}_f(s)\Min \beta ds+
\sqrt{\nu\left(K-\frac{\nu K}{\nu+\mu}\right)}W_1(t)
-\sqrt{\mu\left(\frac{\nu K}{\nu+\mu}\right)}W_2(t),
\end{equation*}
where $W_1(t)$ and $W_2(t)$ are (independent) standard Brownian motions. Last, by the properties
of Brownian motion, we get
\begin{equation*}
\sqrt{\nu\left(K-\frac{\nu K}{\nu+\mu}\right)}W_1(t)
-\sqrt{\mu\left(\frac{\nu K}{\nu+\mu}\right)}W_2(t)
\overset{d}= \sqrt{\frac{2 \nu \mu K }{\nu+\mu}}W(t),
\end{equation*}
where $W(t)$ is a standard Brownian motion.
\end{proof}

\subsubsection{Proof of Theorem~\ref{prop:fullparking}}
The rest of this section gives a proof of Theorem~\ref{prop:fullparking}. We first
show a bound for the process
$E^n(\cdot)$.

\begin{proposition}\label{Prop:boundEmpty}
Let $T>0$. We have that for any $\epsilon>0$ there exists $n_{\epsilon}$ such that
\begin{equation*}
\Prob{\sup_{0\leq t\leq T} E^n(t) \leq L(nT)^{1/4}+
L\log(nT) }>1-\epsilon,
\end{equation*}
for any $n \geq n_{\epsilon}$, where $L$ is a positive constant.
\end{proposition}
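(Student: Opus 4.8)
The key observation is that, thanks to the merging identity \eqref{eq:mergeP}, the process of empty spaces $E^n(\cdot)=K^n-Q^n(\cdot)$ is autonomous: $Q^n(\cdot)$ is an Erlang loss queue, so $E^n(\cdot)$ is itself a birth--death process, increasing (a car departs) at rate $\nu\big(K^n-E^n(t)\big)$ and decreasing (an arriving car is admitted) at rate $\lambda^n\ind{E^n(t)>0}$. Under the overload hypothesis $\nu K<\lambda$, whenever $E^n(t)>0$ this process has drift $\nu(K^n-E^n(t))-\lambda^n\le-(\lambda-\nu K)n<0$, so it is pushed back to the boundary at a rate of order $n$; the proposition quantifies how high it can nonetheless climb on $[0,T]$. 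This is essentially the statement of \cite{aldous1986} without the accompanying time change, so I would follow a similar route but track the dependence on $n$ explicitly.

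The first step is a pathwise domination by a stable $M/M/1$ queue. Let $\bar E^n(\cdot)$ be the birth--death process on $\{0,1,2,\dots\}$ with constant up-rate $\nu K^n$ and down-rate $\lambda^n\ind{\bar E^n(\cdot)>0}$, started from $\bar E^n(0)=E^n(0)$; its load is $\rho:=\nu K/\lambda<1$, \emph{fixed} in $n$. Running both processes off a common rate-$\lambda^n$ clock for the downward jumps and, at each epoch of the rate-$\nu K^n$ upward clock of $\bar E^n$, letting $E^n$ jump up as well with the current probability $1-E^n(t)/K^n\in[0,1]$, an induction over jump epochs gives $E^n(t)\le\bar E^n(t)$ for all $t\ge0$ almost surely. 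It therefore suffices to estimate $\sup_{0\le t\le T}\bar E^n(t)$.

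The second step estimates this running maximum. Write $\bar E^n$ through the one-dimensional reflection map at $0$ (see \cite[Section~6.2]{chen2001fundamentals}) applied to the net-input process $X^n(t):=N_{\nu K^n}(t)-N_{\lambda^n}(t)$, a difference of independent Poisson processes with jump rate $(\nu K+\lambda)n$ and drift $-(\lambda-\nu K)n$. Since the reflected value at any time $v$ is bounded by $\bar E^n(0)$ plus the $X^n$-increment since the last visit to $0$,
\[
\sup_{0\le t\le T}\bar E^n(t)\ \le\ \bar E^n(0)+\sup_{0\le u\le v\le T}\big(X^n(v)-X^n(u)\big).
\]
For the double supremum I would pass to the embedded random walk: on $[0,T]$ the process $X^n$ makes $N^n$ jumps with $\E{N^n}=(\nu K+\lambda)nT$, and the embedded walk has i.i.d.\ steps with negative mean and $\rho<1$; a gambler's-ruin (Cram\'er) bound gives $\Prob{\,\text{a given partial-sum excursion reaches height }x\,}\le\rho^{x}$, so conditioning on $\{N^n\le 2(\nu K+\lambda)nT\}$ (whose complement has probability $<\epsilon/2$ for $n$ large, by Chebyshev) and taking a union bound over the at most $2(\nu K+\lambda)nT$ excursions yields
\[
\Prob{\,\sup_{0\le u\le v\le T}\big(X^n(v)-X^n(u)\big)\ge x\,}\ \le\ c_1\,nT\,\rho^{x}+\tfrac{\epsilon}{2}
\]
for a constant $c_1$ depending only on $\lambda,\nu,K$. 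Choosing $x=L\log(nT)$ with $L$ large enough makes the first term $<\epsilon/2$; absorbing the (deterministic or $o(\sqrt n)$) initial level $\bar E^n(0)=E^n(0)$ into the slack term $L(nT)^{1/4}$ and enlarging $L$ once more to dominate $c_1$ and $1/\log(1/\rho)$ gives the claimed bound for all $n\ge n_\epsilon$.

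The step I expect to be the real work is the second one: converting the heuristic ``a negative-drift walk over $\Theta(nT)$ steps has maximum $O(\log nT)$'' into the displayed uniform estimate. The delicate points are (i) that one needs the \emph{double} supremum of $X^n$ -- equivalently the running maximum of the reflected process -- rather than a one-sided supremum, which forces the excursion decomposition (or, alternatively, an exponential-martingale maximal inequality combined with a union bound over $\lceil nT\rceil$ subintervals); (ii) that all constants ($c_1$ and the exponential rate $\log(1/\rho)$) must be independent of $n$, which is exactly the point at which the strict overload $\nu K<\lambda$ enters; and (iii) the bookkeeping needed to land on the precise form $L(nT)^{1/4}+L\log(nT)$, where the $(nT)^{1/4}$ term is comfortable slack that also covers any initial backlog $E^n(0)$ that is $o(\sqrt n)$.
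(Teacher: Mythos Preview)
Your proposal is correct and shares the paper's first step---dominating $E^n(\cdot)$ pathwise by a stable $M/M/1$ queue with arrival rate $\nu K^n$ and service rate $\lambda^n$---but diverges in how the $M/M/1$ supremum is controlled. The paper first rescales time so that $\sup_{0\le t\le T}E_M^n(t)\overset{d}{=}\sup_{0\le t\le nT}E_M(t)$ for a \emph{fixed}-rate $M/M/1$ queue, and then invokes two strong-approximation results from \cite{chen2001fundamentals}: Theorem~6.16 supplies a reflected Brownian motion $\tilde E_M$ with $\sup_{0\le t\le nT}|E_M(t)-\tilde E_M(t)|=o((nT)^{1/4})$, and Theorem~6.3 gives $\sup_{0\le t\le nT}|\tilde E_M(t)|=O(\log(nT))$; the triangle inequality then yields exactly the stated bound. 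So in the paper the $(nT)^{1/4}$ term is the strong-approximation error and the $\log(nT)$ term is the RBM maximum, whereas in your argument the $\log(nT)$ already does all the work (via the Cram\'er/gambler's-ruin tail $\rho^x$ and a union bound over $O(nT)$ excursions) and the $(nT)^{1/4}$ is pure slack used only to absorb $E^n(0)$. Your route is more elementary and self-contained, and in fact delivers a sharper $O(\log nT)$ bound; the paper's route is shorter once one is willing to cite the Chen--Yao machinery, and it makes transparent why the particular two-term form appears in the statement.
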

Before we proceed to the proof of Proposition~\ref{Prop:boundEmpty}, we show a preliminary
result.
\begin{lemma}\label{lem:bound}
Let $E_{M}(t)$ denote the queue length process in an $M/M/1$ queue at time $t\geq 0$, with
arrival rate $\nu K$ and service rate $\lambda$ such that $\nu K<\lambda$.
For any $T>0$, we have that
\begin{equation*}
\sup\limits_{0\leq t\leq nT} E_M(t) \leq L(nT)^{1/4}+L\log(nT),
\end{equation*}
almost surely as $n \rightarrow \infty$, where $L$ is a positive constant.
\end{lemma}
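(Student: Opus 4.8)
The plan is to derive a geometric tail bound for the first passage time of the stable $M/M/1$ queue to a high level and then conclude by Borel--Cantelli. Write $\rho:=\nu K/\lambda\in(0,1)$. Since the assertion concerns only the behaviour as $n\to\infty$, it suffices (after inflating the constant $L$) to treat a fixed initial state, and for concreteness I take $E_M(0)=0$; the general case is discussed below. The key device is the exponential process $V(t):=\rho^{-E_M(t)}$. Denoting by $\mathcal A$ the generator of $E_M(\cdot)$ (birth rate $\nu K$, death rate $\lambda$), a one-line computation gives $\mathcal A V(k)=\nu K(\rho^{-k-1}-\rho^{-k})+\lambda(\rho^{-k+1}-\rho^{-k})=0$ for $k\ge1$ and $\mathcal A V(0)=\nu K(\rho^{-1}-1)=\lambda-\nu K>0$, so $V(\cdot)$ is a nonnegative submartingale; by Dynkin's formula $\E{V(t)}=1+(\lambda-\nu K)\int_0^t\Prob{E_M(s)=0}\,ds\le 1+(\lambda-\nu K)t$.

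Next I bound the first passage time. For $m\in\mathbb N$ set $\tau_m:=\inf\{t\ge0:E_M(t)\ge m\}$; since $E_M(\cdot)$ is right-continuous and piecewise constant with unit jumps, $\{\sup_{0\le t\le \mathcal T}E_M(t)\ge m\}=\{\tau_m\le \mathcal T\}$. Applying the optional stopping theorem to the submartingale $V$ at the bounded stopping time $\tau_m\wedge\mathcal T$ (note that $V\le\rho^{-m}$ on $[0,\tau_m]$, so there is no integrability issue) yields
\[
\rho^{-m}\,\Prob{\tau_m\le \mathcal T}\le \E{V(\tau_m\wedge\mathcal T)}\le \E{V(\mathcal T)}\le 1+(\lambda-\nu K)\mathcal T,
\]
hence $\Prob{\sup_{0\le t\le \mathcal T}E_M(t)\ge m}\le\bigl(1+(\lambda-\nu K)\mathcal T\bigr)\rho^{m}$ for every $\mathcal T>0$ and $m\in\mathbb N$.

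Now specialise to $\mathcal T=nT$ and $m=m_n:=\lceil L(nT)^{1/4}\rceil$. Then $\Prob{\sup_{0\le t\le nT}E_M(t)\ge m_n}\le\bigl(1+(\lambda-\nu K)nT\bigr)\rho^{L(nT)^{1/4}}$, and since $\rho<1$ the factor $\rho^{L(nT)^{1/4}}=e^{-L\lvert\log\rho\rvert (nT)^{1/4}}$ decays faster than any power of $n$, so $\sum_{n\ge1}\bigl(1+(\lambda-\nu K)nT\bigr)\rho^{L(nT)^{1/4}}<\infty$ for any $L>0$. By Borel--Cantelli, almost surely $\sup_{0\le t\le nT}E_M(t)<m_n\le L(nT)^{1/4}+1\le L(nT)^{1/4}+L\log(nT)$ for all $n$ large enough (the last inequality holding once $L\log(nT)\ge1$); taking $L$ large enough makes the bound match the constant used in Proposition~\ref{Prop:boundEmpty}, which proves the lemma. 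Note the logarithmic term is pure slack here: the polynomial term $(nT)^{1/4}$ already forces summability, and it is presumably kept only to line up with the bound for $E^n(\cdot)$.

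The only point that needs a little care — and the place I would expect to spend the most effort in a careful write-up — is the bookkeeping of the initial state. For a general deterministic $E_M(0)=e_0$ one has $\E{V(t)}\le\rho^{-e_0}+(\lambda-\nu K)t$, which merely replaces the prefactor $1$ by a constant and does not disturb summability; if in the intended application the relevant starting level grows with $n$, one first waits for the hitting time of $0$ (rapidly attained because the empty-space process has strictly negative drift in the overloaded regime $\nu K<\lambda$) and restarts the queue there, reducing to $e_0=0$. Everything else is a routine computation with the $M/M/1$ generator together with the standard summability check, so I do not anticipate any genuine obstacle.
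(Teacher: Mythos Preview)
Your argument is correct and self-contained, but it is a genuinely different route from the paper's. The paper does not work with the exponential martingale $\rho^{-E_M(t)}$ at all; instead it invokes two strong-approximation results from \cite{chen2001fundamentals}. First, \cite[Theorem~6.16]{chen2001fundamentals} gives a reflected Brownian motion $\tilde E_M(\cdot)$ with drift $\nu K-\lambda<0$ such that $\sup_{0\le t\le nT}|E_M(t)-\tilde E_M(t)|=o((nT)^{1/4})$ almost surely; second, \cite[Theorem~6.3]{chen2001fundamentals} yields $\sup_{0\le t\le nT}|\tilde E_M(t)|=O(\log(nT))$ almost surely. The triangle inequality then produces exactly the two-term bound $L(nT)^{1/4}+L\log(nT)$. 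So in the paper the $(nT)^{1/4}$ and $\log(nT)$ pieces have distinct origins (approximation error versus RBM maximum), whereas in your approach neither is intrinsic---as you observe, even $C\log(nT)$ with large $C$ would already be summable via your tail bound, and you only inflate to $(nT)^{1/4}$ to match the stated form. Your proof is more elementary and avoids the heavy strong-approximation machinery; the paper's proof is shorter if one is willing to quote those theorems and, incidentally, explains why the lemma is phrased with that particular two-term shape.
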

\begin{proof}
The proof of this lemma is based on results in \cite{chen2001fundamentals}.
By \cite[Theorem~6.16]{chen2001fundamentals}, there exists a reflected (at zero) Brownian motion,
$\tilde{ E}_M(\cdot)$ with drift $\nu K-\lambda<0$ such that
\begin{align*}
 \sup\limits_{0\leq t\leq nT}| E_M(t)-\tilde{ E}_M(t)|= o\left((nT)^{1/4}\right),
\end{align*}
or equivalently
\begin{align}\label{eq:strongA}
 \sup\limits_{0\leq t\leq nT}| E_M(t)-\tilde{ E}_M(t)|\leq  L'(nT)^{1/4},
\end{align}
for all $L'>0$, almost surely as $n \rightarrow \infty$. Further, by
\cite[Theorem~6.3]{chen2001fundamentals},
\begin{align*}
 \sup\limits_{0\leq t\leq nT}|\tilde{ E}_M(t)|= O\left(\log(nT)\right),
\end{align*}
or alternatively there exists $L>0$ such that
\begin{align}\label{eq:RBMProxy}
 \sup_{0\leq t \leq nT} |\tilde{E}_M(t)|\leq L \log(nT),
\end{align}
almost surely as $n \rightarrow \infty$.

Now, using the triangle inequality leads to
\begin{align*}
  \sup\limits_{0\leq t\leq nT} E_M(t) &=
  \sup\limits_{0\leq t\leq nT}| E_M(t)-\tilde{ E}_M(t)+\tilde{E}_M(t)|\\
 &\leq \sup\limits_{0\leq t\leq nT}| E_M(t)-\tilde{ E}_M(t)|
  +\sup\limits_{0\leq t\leq nT} |\tilde{E}_M(t)|.
\end{align*}
Applying \eqref{eq:strongA} (by choosing $L'=L$) and \eqref{eq:RBMProxy} in the last inequality, we
have that
\begin{equation*}
\sup\limits_{0\leq t\leq nT} E_M(t) \leq L (nT)^{1/4}+L\log(nT),
\end{equation*}
almost surely as $n \rightarrow \infty$.
\end{proof}
Now, we are ready to show Proposition~\ref{Prop:boundEmpty}.
\begin{proof}[Proof of Proposition~\ref{Prop:boundEmpty}]
Let $E_{M}^n(t)$ denote the queue length process in an $M/M/1$ queue at time $t\geq 0$, with
arrival rate $n \nu K$ and service rate $n \lambda$.
Using standard coupling arguments, it can be shown that
$\sup\limits_{0\leq t\leq T} E^n(t) \leq \sup\limits_{0\leq t\leq T} E^n_M(t)$ almost surely.
Further, we have that
$\sup\limits_{0\leq t\leq T} E^n_M(t)\overset{d}=
 \sup\limits_{0\leq t\leq nT} E_M(t)$. Hence, for any $q^n>0$,
 \begin{align*}
  \Prob{ \sup\limits_{0\leq t\leq T} E^n(t) \leq q^n }\geq
  \Prob{ \sup\limits_{0\leq t\leq nT} E_M(t) \leq q^n }.
 \end{align*}
Choosing $q^n=L(nT)^{1/4}+L\log(nT)$ and applying
Lemma~\ref{lem:bound}, we have that for any $\epsilon>0$ there exists $n_{\epsilon}$ such that
 \begin{align*}
  \Prob{ \sup\limits_{0\leq t\leq nT} E_M(t) \leq
  L(nT)^{1/4}+L\log(nT) }>1-\epsilon,
 \end{align*}
for $n>n_\epsilon$. This concludes the proof as $\epsilon$ is arbitrary.
\end{proof}

\begin{remark}
Define the sample path set $\mathcal{G}^n\subseteq \Omega$ such that
$$ \mathcal{G}^n:=\left\{\omega\in \Omega:
\sup\limits_{0\leq t\leq T} E^n(t) \leq L(nT)^{1/4}+L\log(nT)  \right\}.$$
By Proposition~\ref{Prop:boundEmpty} follows that
$\Prob{ \mathcal{G}^n}\rightarrow 1$, an $n \rightarrow \infty$. In the sequel, we assume that
$\omega \in \mathcal{G}^n$.
\end{remark}

\begin{proof}[Proof of Theorem~\ref{prop:fullparking}.]
Rewriting \eqref{eq:keyrelation}
for the $n^{th}$ system and assuming without loss of generality $\Prob{E^n(0)=0}=1$, yields
\begin{align*}
Z^n(t)&=Z^n(0)-E^n(t) + N_{\nu,2}\left(\int_{0}^{t}
\left(K^n-E^n(s)-Z^n(s)\right) ds\right)
-N_{\mu}\left(\int_{0}^{t} Z^n(s)\Min M^n ds\right).
\end{align*}
Let $T>0$ and $\omega \in \mathcal{G}^n$. We have that
\begin{align*}
  0\leq \frac{E^n(t)}{\sqrt{n}} \leq
  \sup\limits_{0\leq t\leq T} \frac{E^n(t)}{\sqrt{n}}
  \leq \frac{L(nT)^{1/4}+L\log(nT)}{\sqrt{n}}\rightarrow 0,
\end{align*}
as $n\rightarrow \infty$. The result now follows by adapting the proof of Proposition~\ref{prop:hugear}.
\end{proof}

\subsection{Proofs for Section~\ref{sec:DifsmallparkingT}}\label{sec:largeparking}
\begin{proof}[Proof of Proposition~\ref{Prop: heavy traffic}]
First, we note that it is enough to show the result for $M=1$. Then we replace $\mu$ by $\mu M$; see \cite[Remark~5]{ward03}.

Scale the time by $n$, and add and subtract the means of the Poisson processes in
\eqref{eq:numberQ} and \eqref{eq:numberU} to obtain
\begin{align*}
Q^n(n t)=Q^n(0)+
\left( N_{\lambda^n} (n t) - \lambda^n n t \right)
-\left(
 N_{\nu^n}\left(
 n \int_{0}^{t}Q^n(ns) ds
 \right) - n \nu^n \int_{0}^{t}Q^n(ns)ds
 \right)\\+\lambda^n n t- n \nu^n \int_{0}^{t}Q^n(ns)ds
\end{align*}
and
\begin{align*}
Z^n(n t)=Z^n(0)+\left(N_{\lambda^n} (n t) -\lambda^n n t\right)
-\left(
N_{\mu}\left(
n \int_{0}^{t} \ind{Z^n(ns)>0} ds
\right)-n \mu \int_{0}^{t} \ind{Z^n(ns)>0} ds
\right)  \\
 - \left(
 N_{\nu^n,1}\left(
n\int_{0}^{t} Z^n(ns) ds
\right)-n \nu^n\int_{0}^{t} Z^n(ns) ds
\right)+\lambda^n n t-n \mu \int_{0}^{t} \ind{Z^n(ns)>0} ds
-n \nu^n\int_{0}^{t} Z^n(ns) ds.
\end{align*}
Define $\bar{Q}^n(t)=Q^n(nt)/n$, $\bar{Z}^n(t)=Z^n(nt)/n$, and recall that $\mathcal{M}_{r}(\cdot
):=N_{r} (\cdot)-(r\cdot)$.
Adding and subtracting the terms $\mu n$ and $\lambda^n n t$ in the first equation yields
\begin{align*}
Q^n(n t)- \mu n=Q^n(0)- \mu n
+\mathcal{M}_{\lambda^n}(n t)
-\mathcal{M}_{1}\left(n\int_{0}^{t}\bar {Q}^n(s) ds\right)
-c\mu \sqrt{n} t- \int_{0}^{t} \left(Q^n(ns)-\mu n\right) ds
\end{align*}
and
\begin{align*}
Z^n(n t)=Z^n(0)
+\mathcal{M}_{\lambda^n }(n t)
-\mathcal{M}_{\mu}\left(
n \int_{0}^{t} \ind{\bar{Z}^n(s)>0} ds\right)
-\mathcal{M}_{1,1}\left(
n \int_{0}^{t} \bar{Z}^n(s) ds\right)\\
+\mu(1-\frac{c}{\sqrt{n}})n t
-\mu n \int_{0}^{t} \ind{\bar{Z}^n(s)>0} ds
- \int_{0}^{t} \left(Z^n(n s)\right) ds.
\end{align*}
Dividing the last equations by $\sqrt{n}$ and observing that
\begin{equation*}
\mu(1-\frac{c}{\sqrt{n}})n t
-\mu n \int_{0}^{t} \ind{\bar{Z}^n(s)>0} ds=
-c\mu \sqrt{n} t+\mu n \int_{0}^{t} \ind{\bar{Z}^n(s)=0}ds,
\end{equation*}
we obtain
\begin{align*}
\tilde Q^n(t)=\tilde Q^n(0)
+\hat{ \mathcal{M}}^n_{\lambda^n}(n t)
-\hat{\mathcal{M}}^n_{1}\left(n\int_{0}^{t}\bar {Q}^n(s) ds\right)
-c\mu t- \int_{0}^{t} \tilde{Q}^n(s)  ds
\end{align*}
and
\begin{align*}
\tilde{Z}^n(t)=\tilde{Z}^n(0)
+\hat{\mathcal{M}}^n_{\lambda^n }(nt)
-\hat{\mathcal{M}}^n_{\mu}\left( n
 \int_{0}^{t} \ind{\bar{Z}^n(s)>0} ds\right)
-\hat{\mathcal{M}}^n_{1,1}\left( n\int_{0}^{t} \bar{Z}^n(s) ds\right)\\
-c\mu t+\mu \sqrt{n} \int_{0}^{t} \ind{\bar{Z}^n(s)=0} ds
- \int_{0}^{t} \tilde{Z}^n( s)ds.
\end{align*}
Now, taking the limit $n\rightarrow\infty$ and using the reflection mapping
\cite{chen2001fundamentals}, we derive
\begin{align*}
  d\tilde{Q}(t) &=-(c\mu+\tilde{Q}(t)) dt+\sqrt{2\mu} dW_{\tilde Q}(t) ,\\
    d\tilde{Z}(t)& =-(c\mu+\tilde{Z}(t)) dt+ \sqrt{2\mu} dW_{\tilde Z}(t) +d\tilde{Y}(t),
\end{align*}
where $\int_{0}^{\infty} \ind{\tilde{Z}(t)>0}d\tilde{Y}(t)=0$. Further,
$\E{W_{\tilde Z}(t) W_{\tilde Q}(t)}= \frac{\mu M t}{2 \mu M}=t/2$.
\end{proof}

\begin{proof}[Proof of Proposition~\ref{prop:Difover}]
First, as in Proposition~\ref{Prop: heavy traffic},
we note that it is enough to show the result for $M=1$ and the replace $\mu$ by $\mu M$.
Scale the time by $n$, and  add and subtract the means of the Poisson processes in
\eqref{eq:numberQ} and \eqref{eq:numberU} to obtain
\begin{align*}
Q^n(n t)=Q^n(0)+
\left( N_{\lambda} (n t) - \lambda n t \right)
-\left(
 N_{\nu^n}\left(
 n \int_{0}^{t}Q^n(ns) ds
 \right) - n \nu^n \int_{0}^{t}Q^n(ns)ds
 \right)\\+\lambda n t- n \nu^n \int_{0}^{t}Q^n(ns)ds
\end{align*}
and
\begin{align*}
Z^n(n t)=Z^n(0)+\left(N_{\lambda} (n t) -\lambda n t\right)
-\left(
N_{\mu}\left(
n \int_{0}^{t} \ind{Z^n(ns)>0} ds
\right)-n \mu \int_{0}^{t} \ind{Z^n(ns)>0} ds
\right)  \\
 - \left(
 N_{\nu^n,1}\left(
n\int_{0}^{t} Z^n(ns) ds
\right)-n \nu^n\int_{0}^{t} Z^n(ns) ds
\right)+\lambda n t-n \mu \int_{0}^{t} \ind{Z^n(ns)>0} ds
-n \nu^n\int_{0}^{t} Z^n(ns) ds.
\end{align*}
Adding and subtracting the terms $\lambda n$ and $\lambda n t$ in the first equation, and the
terms $(\lambda-\mu) n$ and $(\lambda-\mu) n t$ in the second equation  yields
\begin{align*}
Q^n(n t)- \lambda n=Q^n(0)- \lambda n
+\mathcal{M}_{\lambda}(n t)
-\mathcal{M}_{1}\left(n\int_{0}^{t}\bar {Q}^n(s) ds\right)
- \int_{0}^{t} \left(Q^n(ns)-\lambda n\right)  ds
\end{align*}
and
\begin{align*}
Z^n(n t)-(\lambda-\mu) n=Z^n(0)-(\lambda-\mu) n
+\mathcal{M}_{\lambda }(n t)
-\mathcal{M}_{\mu}\left(
n \int_{0}^{t} \ind{\bar{Z}^n(s)>0} ds\right)
-\mathcal{M}_{1,1}\left(
n \int_{0}^{t} \bar{Z}^n(s) ds\right)\\
+\lambda n t-(\lambda-\mu) n t
-\mu n \int_{0}^{t} \ind{\bar{Z}^n(s)>0} ds
- \int_{0}^{t} \left(Z^n(n s)-(\lambda-\mu) n\right) ds.
\end{align*}
Dividing the last equations by $\sqrt{n}$ and observing that
\begin{equation*}
\lambda n t-(\lambda-\mu) n t
-\mu n \int_{0}^{t} \ind{\bar{Z}^n(s)>0} ds=
\mu n \int_{0}^{t} \ind{\bar{Z}^n(s)=0}ds,
\end{equation*}
we obtain
\begin{align*}
\tilde Q^n_o(t)=\tilde Q^n_o(0)
+\hat{ \mathcal{M}}^n_{\lambda}(n t)
-\hat{\mathcal{M}}^n_{1}\left(n\int_{0}^{t}\bar {Q}^n(s) ds\right)
- \int_{0}^{t} \tilde{Q}^n_o(s)  ds
\end{align*}
and
\begin{align*}
\tilde{Z}^n_o(t)=\tilde{Z}^n_o(0)
+\hat{\mathcal{M}}^n_{\lambda }(n t)
-\hat{\mathcal{M}}^n_{\mu}\left(n
 \int_{0}^{t} \ind{\bar{Z}^n(s)>0} ds\right)
-\hat{\mathcal{M}}^n_{1,1}\left( n\int_{0}^{t} \bar{Z}^n(s) ds\right)\\
-\mu \sqrt{n} \int_{0}^{t} \ind{\bar{Z}^n_o(s)=0} ds
- \int_{0}^{t} \tilde{Z}^n( s)ds.
\end{align*}
By the overloaded assumption, it follows that
$\sqrt{n} \int_{0}^{t} \ind{\bar{Z}^n(s)=0} ds \rightarrow 0$, as $n \rightarrow \infty$;
\cite{ward03}. Now, taking the limit $n\rightarrow\infty$, we derive
\begin{align*}
d\tilde{Q}_o(t) &=-\tilde{Q}_o(t) dt+\sqrt{\lambda}dW_1(t) - \sqrt{\mu}dW(t),\\
d\tilde{Z}_o(t)& =-\tilde{Z}_o(t) dt+
\sqrt{\lambda}dW_1(t)-\sqrt{\mu}dW_2(t)-\sqrt{\lambda-\mu}dW_3(t),
\end{align*}
and we can write
$\sqrt{\mu}W(t)\overset{d}= \sqrt{\lambda-\mu}W_3(t)+\sqrt{\mu}W_4(t)$,
where $W_i$ are independent standard Brownian motions.
\end{proof}

\section{Concluding Remarks }
This paper proposes to model an electric vehicle charging model by using layered queueing networks.
We develop several bounds and approximations for the number of uncharged EVs in the system and the probability that an EV leaves the charging station with fully charged battery. In the numerical examples, it seems that a modification of the approximation for a full parking lot leads to a good approximation.
Further, the fluid approximation seems to be good in most cases and we believe that the diffusion approximation in the Halfin-Whitt regime will improve the fluid approximation. Unfortunately, the exact (or even numerical) solution of \eqref{eq:BAR} seems very hard.

From an application standpoint, it is important to remove various model assumptions. If parking and charging times are given by the (possibly dependent) generally
distributed random variables $B$ and $D$, we can develop a
measure-valued fluid model by extending \cite{gromoll2006impact}. In addition, we can include in the model time-varying arrival rates, multiple EV types, and multiple parking lots, thus extending results in \cite{remerova14}. Moreover,  the distribution grid (low-voltage network) plays a crucial role and it should be included in the model; see
\cite{aveklouriselectric} for a heuristic approach. For another application in EV-charging including the distribution grid, see  \cite{carvalho2015critical} where simulation results are presented for a Markovian model.

\addcontentsline{toc}{section}{Acknowledgements}
\section*{Acknowledgements}
The research of
Angelos Aveklouris is funded by a TOP grant of the Netherlands Organization
for
Scientific Research (NWO) through project 613.001.301. The research of Maria
Vlasiou is supported by the NWO MEERVOUD grant 632.003.002.
The research of Bert Zwart is partly supported by the NWO VICI grant
639.033.413.

\phantomsection
\addcontentsline{toc}{section}{References}
\begin{footnotesize}

\bibliography{Mybibliography_Angelos_Ab}
\bibliographystyle{abbrv}
\end{footnotesize}
\end{document}